\documentclass[a4paper,fleqn]{cas-sc}

\usepackage[T1]{fontenc}
\usepackage[utf8]{inputenc}
\usepackage{amsmath,amssymb,amsthm,mathtools}
\usepackage{enumitem}
\usepackage{booktabs}
\usepackage{array}
\usepackage{xurl}
\usepackage[numbers,sort&compress]{natbib}

\setlist[enumerate]{itemsep=2pt, topsep=4pt}
\setlist[itemize]{itemsep=2pt, topsep=4pt}
\allowdisplaybreaks

\newtheorem{theorem}{Theorem}[section]
\newtheorem{proposition}[theorem]{Proposition}
\newtheorem{lemma}[theorem]{Lemma}
\newtheorem{corollary}[theorem]{Corollary}
\theoremstyle{definition}
\newtheorem{definition}[theorem]{Definition}
\theoremstyle{remark}
\newtheorem{remark}[theorem]{Remark}

\newcommand{\rhoext}{\rho'}
\newcommand{\Ap}{A^{+}}
\newcommand{\Az}{A^{0}}
\newcommand{\Smk}{S^{-}_{\frac{m+4}{2},2}}

\newcommand{\Lm}{L_m}

\begin{document}
\let\WriteBookmarks\relax
\def\floatpagepagefraction{1}
\def\textpagefraction{.001}

\shorttitle{Sharp even-size threshold for $H(4,3)$-free graphs}
\shortauthors{S. Sarkar}

\title[mode=title]{The Sharp Even-Size Spectral Threshold for $H(4,3)$-Free Graphs}

\author[1]{Shreyhaan Sarkar}[orcid=0009-0002-4325-9939]
\cormark[1]
\ead{sms736@cornell.edu}
\credit{Conceptualization, methodology, formal analysis, software, writing -- original draft}

\affiliation[1]{organization={Department of Mathematics, Cornell University},
                addressline={310 Malott Hall},
                city={Ithaca},
                postcode={14853},
                state={New York},
                country={USA}}

\cortext[cor1]{Corresponding author}

\begin{abstract}
Let $H(4,3)$ be the graph obtained by identifying one vertex of a $4$-cycle with one vertex of a triangle. We determine the sharp even-size threshold for the fixed-size spectral extremal problem forbidding $H(4,3)$. Specifically, if $G$ is an $H(4,3)$-free graph of even size $m\ge 18$ without isolated vertices, then $\rho(G)\le \rho'(m)$, where $\rho'(m)$ is the largest root of $x^4-mx^2-(m-2)x+m/2-1=0$. Equality holds if and only if $G\cong S^{-}_{(m+4)/2,2}$. The value $18$ is best possible: explicit $H(4,3)$-free obstruction graphs exceed the comparison value at $m=10,12,14,16$.

The main new ingredient is a local interface independence principle in the Perron-neighborhood decomposition. In the threshold-bearing branch where the neighborhood core induces $K_4$, the exterior vertices adjacent to this core form an independent set. This turns the only dangerous interface term into a finite endpoint problem and yields the best possible threshold.
\end{abstract}

\begin{keywords}
spectral radius \sep forbidden subgraph \sep extremal graph \sep Perron vector \sep graph spectra
\MSC[2020]{05C50 \sep 05C12 \sep 15A18}
\end{keywords}

\maketitle

\section{Introduction}

Let $G$ be a simple graph with $m$ edges, and let $\rho(G)$ denote its adjacency spectral radius. A fixed-size spectral extremal problem asks for the maximum possible value of $\rho(G)$ among graphs of size $m$ satisfying a prescribed forbidden-subgraph condition. This is the Brualdi--Hoffman--Tur\'an viewpoint, initiated by Brualdi and Hoffman~\cite{BrualdiHoffman1985} and closely related to classical spectral Tur\'an theory. The fixed-size spectral viewpoint also appears in Nosal's triangle-free bound~\cite{Nosal1970} and in Nikiforov's work on graphs without specified paths and cycles~\cite{Nikiforov2010}. For broader recent context, see Liu and Wang~\cite{LiuWang2024}, Li, Zhao, and Zou~\cite{LiZhaoZou2025}, and Rehman and Pirzada~\cite{RehmanPirzada2025}.

We study the graph $H(4,3)$ obtained by identifying one vertex of a $4$-cycle with one vertex of a triangle. This graph is often called the \emph{fish graph}. The fixed-size $H(4,3)$-free problem is a natural even-cycle analogue of the bowtie problem for $H(3,3)$. In the even case, the expected extremal graph is not the usual join $S_{(m+3)/2,2}$, but the edge-deleted graph $S^{-}_{(m+4)/2,2}$, whose spectral radius is the largest root of
\[
p_m(x)=x^4-mx^2-(m-2)x+\frac m2-1.
\]

The surrounding results point to the same extremal graph and polynomial. Table~\ref{tab:prior-results} summarizes the thresholds most relevant to the present paper.
\begin{center}
\small
\refstepcounter{table}\label{tab:prior-results}
\textbf{Table~\thetable}\par
\medskip
\begin{minipage}{0.96\textwidth}
\centering
Comparison with prior fixed-size spectral extremal results relevant to the even-size $H(4,3)$-free problem.\par
\medskip
\begin{tabular}{@{}p{0.31\textwidth} c c p{0.20\textwidth} p{0.15\textwidth}@{}}
\toprule
Forbidden condition & Parity & Threshold & Extremal graph & Source \\
\midrule
$H(3,3)$-free & odd & $m\ge 8$ & $S_{\frac{m+3}{2},2}$ & Li--Lu--Peng~\cite{LiLuPeng2023} \\
$\{H(3,3),H(4,3)\}$-free & even & $m\ge 10$ & $S^{-}_{\frac{m+4}{2},2}$ & Pirzada--Rehman~\cite{PirzadaRehman2025} \\
$H(4,3)$-free & even & $m\ge 51$ & $S^{-}_{\frac{m+4}{2},2}$ & Zhang--Wang~\cite{ZhangWangFish2025} \\
$H(4,3)$-free & even & $m\ge 38$ & $S^{-}_{\frac{m+4}{2},2}$ & Zheng--Zhang~\cite{ZhengZhang2025} \\
$H(4,3)$-free & even & $m\ge 18$, sharp & $S^{-}_{\frac{m+4}{2},2}$ & this paper \\
\bottomrule
\end{tabular}
\end{minipage}
\end{center}
Thus the extremal graph and polynomial were already strongly suggested by the existing theory. The remaining question was the exact even-size threshold for forbidding $H(4,3)$ alone.

Our main theorem resolves that threshold.

\begin{theorem}\label{thm:main}
Let $G$ be an $H(4,3)$-free graph of even size $m \ge 18$ without isolated vertices. Then
\[
\rho(G)\le \rhoext(m),
\]
where $\rhoext(m)$ is the largest real root of
\[
p_m(x)=x^4-mx^2-(m-2)x+\frac{m}{2}-1.
\]
Moreover, equality holds if and only if $G \cong \Smk$.
\end{theorem}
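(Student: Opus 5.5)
We argue by contradiction in the standard Perron-vector framework. Let $G$ be an $H(4,3)$-free graph of even size $m\ge 18$, without isolated vertices, and with $\rho:=\rho(G)\ge \rhoext(m)$. Assume $G\not\cong \Smk$. Since $\Smk$ attains $\rhoext(m)$, we may take $G$ extremal, so $G$ is connected. Let $x$ be the Perron vector, normalized so that $x_{u^*}=1$ at a vertex $u^*$ of maximum entry.

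First we bound $\rho$ from below. Evaluating $p_m$ at $\sqrt{m-1}$ and at nearby points shows $\rhoext(m)>\sqrt{m-1}$, and in fact $\rhoext(m)^2 = m-1+\Theta(1/\sqrt m)$. Next set $A=N(u^*)$ and $B=V\setminus(A\cup\{u^*\})$. Write $\Ap$ for the vertices of $A$ that have a neighbour in $A$, and $\Az=A\setminus\Ap$.

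The eigen-equation applied twice gives the standard identity
\[
(\rho^2-\rho)\,x_{u^*}=|A|+\sum_{v\in \Ap}(d_A(v)-1)x_v-\sum_{v\in\Az}x_v+\sum_{w\in B} d_A(w)x_w .
\]
We also have $e(A)+e(A,B)+e(B)=m-|A|$. Because $\rho^2-\rho\ge \rhoext^2-\rhoext$ is roughly $m-1-\sqrt m$, these combine into an inequality of the form
\[
\sum_{v\in\Ap}(d_A(v)-1)x_v+\sum_{w\in B}d_A(w)x_w \;\ge\; 2e(A)+e(A,B)+e(B)-\sqrt m+O(1),
\]
after using $x_v\le 1$ and subtracting the edge count.

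Now we exploit $H(4,3)$-freeness. A triangle $u^*ab$ together with a $4$-cycle through $u^*$ avoiding $a,b$ is forbidden. From this, each component of $G[A]$ containing a triangle-free structure must be a star or a triangle, or else the whole of $G[A]$ is small. The dangerous components are those with a vertex of degree $\ge 2$ in $A$ together with many $B$-vertices joined to $A$. We split into cases according to the structure of $G[\Ap]$: a union of stars and isolated edges, a star-like configuration containing a triangle, or a dense core. Most cases are killed by the displayed inequality, since edges inside $B$ or between $A$ and $B$ cost more than they contribute. The surviving configuration is the one in which the core induces $K_4$ (that is, $u^*$ together with a triangle in $A$), or the near-extremal structure in which $u^*$ and a second vertex are joined to almost everything. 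In the latter case a direct comparison, via the quotient matrix and the characteristic polynomial, reduces $G$ to $\Smk$ or to graphs whose spectral radius is strictly below $\rhoext(m)$.

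The main obstacle is the $K_4$-core branch, where the interface term $\sum_{w\in B}d_A(w)x_w$ is not obviously controlled. Here we would prove the interface independence principle announced in the abstract. Any two adjacent exterior vertices $w,w'\in B$ attached to the core would, combined with the triangle through $u^*$ and a $4$-cycle using the $K_4$ vertices, produce an $H(4,3)$. Hence the exterior vertices adjacent to the core form an independent set, and each has bounded $d_A$ (at most one core neighbour, by the same kind of argument). This lets us bound $x_w\le (\text{const})/\rho$ for those $w$, turning the remaining inequality into an explicit function of a few integer parameters: the number of such $B$-vertices and $|A|$. The contradiction then reduces to a finite endpoint check. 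We would verify it by monotonicity in $m$ together with a direct evaluation at $m=18$, using the $m=10,\dots,16$ obstructions to confirm that the bound is tight exactly at the threshold.
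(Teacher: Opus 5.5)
Your outline has the right architecture: the Perron-neighborhood decomposition at $u^*$, a classification of $G[\Ap]$, and independence of the exterior attachments in the $K_4$-branch. But the two steps that actually carry the threshold are wrong or missing.

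\textbf{The quantitative input is off by exactly the critical order.} One has $\rhoext(m)^2=m+\sqrt m+O(1)$, not $m-1+\Theta(m^{-1/2})$. What is needed is $p_m(\Lm)=-\tfrac14$ with $\Lm=(1+\sqrt{4m-5})/2$, i.e.\ $\rho^2-\rho>m-\tfrac32$. Subtracting $m=|N|+e(\Ap)+e(N,W)+e(W)$ from the twice-applied eigen-equation then gives $e(W)<e(\Ap)-|\Ap|+\tfrac32-\sum_{v\in\Az}x_v/x_{u^*}$, with \emph{constant} slack.

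Your displayed inequality, with $2e(A)$ and slack $\sqrt m$, does not follow from your identity, and it is false for $\Smk$ itself. With $t=(m-2)/2$ leaves on the star centred at $c$, one has $B=\varnothing$, the left side is $(t-1)x_c<(m-4)/2$, and the right side is about $m-\sqrt m$.

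With $O(\sqrt m)$ slack you cannot exclude $G[\Ap]\cong kP_2$, which is needed for connectivity of $G[\Ap]$ and hence for the list $H_1,\dots,H_7$. Nor can you get $e(W)\le 3$ in the $K_4$-branch, and that bound is what makes $e(\Ap,W)\le\alpha(G[W])\le 3$ a finite problem.

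You have also misidentified the dangerous core. The threshold-bearing case is $G[\Ap]\cong K_4$, so $\{u^*\}\cup\Ap$ spans $K_5$, as in $T_m$. ``$u^*$ plus a triangle'' is the branch $G[\Ap]\cong C_3$. The inequalities kill that branch, and there a vertex of $W$ can have three neighbours in $\Ap$, so ``at most one core neighbour'' fails.

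\textbf{The $K_4$-branch cannot be closed by Perron inequalities, a bound $x_w\le C/\rho$, and ``monotonicity in $m$ with an evaluation at $m=18$''.} For $e(W)=0$ there is no interface at all: the local restrictions force $W=\varnothing$ and $G\cong T_m$. Then \eqref{eq:zz7} becomes $\rho^2-\tfrac72\rho<16$, which is consistent with $\rho\approx\rhoext(18)\approx 4.59$ and yields a contradiction only for $m\ge 34$.

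The loss sits in the estimate $x_v\le x_{u^*}$ on $\Ap$; in $T_m$ one actually has $x_v=x_{u^*}/(\rho-3)$. Since $\rho(T_{18})\approx 4.48$ while $\rho(T_{16})>\rhoext(16)$, you must compute $\rho(T_m)$ and compare it with $p_m$. For example, use the quotient cubic $x^3-3x^2+(6-m)x+3m-30$. The same holds for $e(W)=1$, and for $e(W)=2$ with $e(\Ap,W)=2$; both survive the inequalities for several even $m\ge 18$.

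The missing content is:
\begin{itemize}
\item the full set of local restrictions: $d_{\Ap}(w)\le1$, $d_{\Az}(w)\le1$, never both, plus $d(w)\ge2$ from moving pendant edges to $u^*$, so that $G[W]$ has no isolated vertices;
\item the enumeration of attachment patterns for each pair $(e(W),e(\Ap,W))$;
\item exact quotient-polynomial comparisons with $p_m$. Some are uniform in $m$, via division by $p_m$ and sign checks on $[\Lm,\infty)$; others are needed only at $m=18,20$.
\end{itemize}

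The $K_4-e$ core likewise needs explicit checks at $m=18,20,22$. The star core needs an edge-moving argument to force $W=\varnothing$ before the $F_{m,t}$ comparison. Finally, the obstructions at $m=10,\dots,16$ play no role in proving the theorem; they only show that the threshold cannot be lowered.
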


The value $18$ is sharp. For $m=10,12,14,16$, the graphs
\[
T_m=K_1\vee\bigl(K_4\cup(m-10)K_1\bigr)
\]
are $H(4,3)$-free and satisfy $\rho(T_m)>\rho'(m)$. Thus the theorem cannot be extended to any smaller even threshold. Proposition~\ref{prop:ew0} proves this obstruction and also shows that $\rho(T_m)<\rho'(m)$ for every even $m\ge18$.

The main new ingredient is a local independence principle for the interface between the Perron-neighborhood core and the exterior set. In the standard decomposition around a maximum Perron-vector vertex $u^*$, write
\[
N=N(u^*),\qquad W=V(G)\setminus N[u^*],
\]
and let
\[
A^+=\{v\in N:d_N(v)\ge1\}.
\]
The only branch that can carry the threshold is the branch $G[A^+]\cong K_4$. In this branch we prove that
\[
U=\{w\in W:d_{A^+}(w)=1\}
\]
is an independent set in $G[W]$. Therefore
\[
e(A^+,W)=|U|\le \alpha(G[W]).
\]
This converts the interface term $e(A^+,W)$, which is the obstruction in the Perron inequalities, into a purely local invariant of the small graph $G[W]$. Since the Perron inequalities already force $e(W)\le3$ in the $K_4$-branch, the independence principle makes the remaining analysis finite and sharply localized. The only genuine endpoint work occurs when $e(W)=2$, or when $e(W)=3$ and $m=18$.

Our proof uses the Perron-neighborhood decomposition developed by Zheng and Zhang~\cite{ZhengZhang2025} as a starting point. Some parts of their branch reduction are threshold-independent, while others use inequalities valid only in their stated range $m\ge38$. For the sharp threshold $18$, we reprove the reductions needed below that range, including the non-$K_4$ endpoint branches and the terminal star branch. Thus the dependence on Zheng--Zhang is organizational rather than threshold-critical.

\subsection*{Organization}
Section~\ref{sec:prelim} fixes notation, identifies the comparison graph, and proves the Perron-neighborhood inequalities used later. Section~\ref{sec:reduction} reduces the proof to the unique threshold-bearing $K_4$-branch and includes a proof map for the reduction. Section~\ref{sec:local} proves the interface independence principle. Sections~\ref{sec:ew0}--\ref{sec:ew3} treat the cases $e(W)=0,1,2,3$ in the $K_4$-branch. Section~\ref{sec:proof-main} proves the main theorem. Appendix~\ref{app:signchecks} contains the exact polynomial checks used in the finite quotient comparisons and explains the accompanying verification script.

\section{The Perron-neighborhood setup}\label{sec:prelim}

All graphs are finite, simple, and undirected. For a graph $G$, we write $V(G)$ and $E(G)$ for its vertex and edge sets and $\rho(G)$ for its adjacency spectral radius.

\begin{lemma}[Connectedness reduction]\label{lem:connected-reduction}
Among all $H(4,3)$-free graphs of even size $m\ge18$ without isolated vertices that maximize the spectral radius, there is a connected maximizer.
\end{lemma}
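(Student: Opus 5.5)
Let $G$ be a maximizer. The family is finite, since a graph with $m$ edges and no isolated vertices has at most $2m$ vertices, so a maximizer exists. If $G$ is connected we are done. Otherwise $G$ has components $G_1,\dots,G_k$ with $k\ge2$. Since $\rho(G)=\max_i\rho(G_i)$, we may assume $\rho(G)=\rho(G_1)$. The plan is to merge the components into a single connected graph $G'$ with the same edge count $m$ and no isolated vertices. We need $G'$ to stay $H(4,3)$-free and to satisfy $\rho(G')\ge\rho(G)$; maximality then forces equality, and $G'$ is the desired connected maximizer.

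\textbf{Merging step.} Choose vertices $v_1\in V(G_1)$ and $v_i\in V(G_i)$ for $i\ge2$, and identify them all into one vertex. This is a one-point union (coalescence) of the components at a single cut vertex.
\begin{itemize}
\item \emph{Edges and isolated vertices.} The edge count is unchanged, because no edges of different components are glued together, so no multi-edges arise. No isolated vertices appear.
\item \emph{Spectral radius.} $G_1$ is a subgraph of $G'$, so $\rho(G')\ge\rho(G_1)=\rho(G)$ by monotonicity of the spectral radius (Perron--Frobenius).
\end{itemize}

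\textbf{Main obstacle: keeping $G'$ free of $H(4,3)$.} The fish graph is $2$-connected except at its shared vertex. Its $4$-cycle and its triangle are blocks, each $2$-connected, and they meet at the cut vertex. In the coalescence $G'$ every block lies inside a single original component. Hence a copy of $H(4,3)$ in $G'$ could have its $C_4$ inside some $G_i$ and its triangle inside some $G_j$ with $i\neq j$, the two sharing the glued vertex. To prevent this, I would choose the gluing vertices carefully. In $G_i$ for $i\ge2$, take $v_i$ to be a vertex lying on no triangle and no $4$-cycle, for example a leaf. If a component has no such vertex, handle it differently: instead of gluing, delete one of its edges and reattach it as a pendant edge at a leaf of the merged graph.

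\textbf{Cleaner alternative.} A simpler route uses the edge count.
\begin{enumerate}
\item Remove all components other than $G_1$. This discards $m-e(G_1)$ edges.
\item Attach a pendant path or star carrying that many edges at a vertex $x$ of $G_1$.
\item The new edges lie on no cycle, so they create no triangle or $4$-cycle. A fish needs both a triangle and a $4$-cycle at the shared vertex, so any fish in the new graph would already lie in $G_1$.
\item The edge count is again $m$, there are no isolated vertices, the graph is connected, and it contains $G_1$, so its spectral radius is at least $\rho(G)$.
\end{enumerate}

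I would present this pendant-star construction as the proof, since it sidesteps the gluing subtlety entirely. The only check needed is that pendant trees create no new cycles, which is immediate.
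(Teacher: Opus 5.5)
Your final pendant-star construction is essentially the paper's proof: the paper keeps a component $C$ with $\rho(C)=\rho(G)$, attaches $m-e(C)$ new leaves at one vertex of $C$, and argues $H(4,3)$-freeness the same way you do, since the new edges lie on no cycle. The only difference is that the paper uses the strict Perron--Frobenius inequality $\rho(G')>\rho(C)$ ($C$ is a proper subgraph of the connected graph $G'$) to reach a contradiction, which shows every maximizer is connected; your weak monotonicity together with maximality already proves the lemma as stated, and the coalescence sketch you set aside is not needed.
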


\begin{proof}
Let $G$ be an extremal graph, and suppose that $G$ is disconnected. Let $C$ be a component of $G$ with
\[
\rho(C)=\rho(G),
\]
and put $r=m-e(C)$. Since $G$ is disconnected, $r>0$. Choose a vertex $u\in V(C)$, and form a new graph $G'$ by taking $C$ and adding $r$ new leaves, each adjacent only to $u$. Then $G'$ is connected, has exactly $e(C)+r=m$ edges, and has no isolated vertices.

The graph $G'$ is still $H(4,3)$-free. Indeed, every new vertex has degree $1$, so no new vertex lies on a triangle or on a cycle. Thus every triangle and every $4$-cycle of $G'$ lies entirely inside $C$, and since $C$ is a subgraph of the $H(4,3)$-free graph $G$, no copy of $H(4,3)$ is created.

Finally, $C$ is a proper induced subgraph of the connected graph $G'$. By the Perron--Frobenius theorem for connected graphs,
\[
\rho(G')>\rho(C)=\rho(G),
\]
contradicting the extremality of $G$. Hence some extremal graph is connected.
\end{proof}

We adopt the neighborhood decomposition of Zheng and Zhang~\cite{ZhengZhang2025}. By Lemma~\ref{lem:connected-reduction}, throughout the reduction we may let $G$ denote a connected extremal $H(4,3)$-free graph of even size $m$ without isolated vertices, chosen so that $\rho(G)$ is maximal among all admissible graphs of size $m$. Lemmas~\ref{lem:comparison-graph-polynomial} and~\ref{lem:comparison-admissible} below show that the comparison graph $\Smk$ is admissible and has spectral radius $\rhoext(m)$, so extremality gives $\rho(G)\ge \rhoext(m)$. Let $x=(x_v)_{v\in V(G)}$ be a positive Perron vector, and choose $u^*\in V(G)$ so that $x_{u^*}=\max\{x_v:v\in V(G)\}$. Put
\[
N=N(u^*),\qquad W=V(G)\setminus N[u^*],
\]
and decompose
\[
\Ap=\{v\in N:d_N(v)\ge 1\},\qquad \Az=N\setminus \Ap.
\]

\begin{definition}
For even $m$, let
\[
p_m(x)=x^4-mx^2-(m-2)x+\frac{m}{2}-1,
\]
and let $\rhoext(m)$ denote the largest real root of $p_m(x)$. We also write
\[
\Lm:=\frac{1+\sqrt{4m-5}}{2}.
\]
\end{definition}

\begin{lemma}\label{lem:comparison-graph-polynomial}
The graph $\Smk$ has spectral radius $\rhoext(m)$.
\end{lemma}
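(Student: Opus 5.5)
We identify $\Smk$ concretely, pass to an equitable quotient matrix, and check that its characteristic polynomial is exactly $p_m(x)$. We read $S^{-}_{n,2}$ as the graph obtained from $S_{n,2}=K_2\vee (n-2)K_1$ by deleting one edge joining a vertex of the $K_2$ to a vertex of the independent part. This is the reading under which the size works out, since $e(S_{n,2})=2(n-2)+1=2n-3$, and for $n=(m+4)/2$ this equals $m+1$. So deleting one edge leaves exactly $m$ edges. Concretely, the vertex set of $\Smk$ is $\{a,b,w\}\cup S$ with $|S|=k:=n-3=(m-2)/2$. The edges are as follows:
\begin{itemize}
\item $ab$;
\item $a$ is adjacent to $w$ and to every vertex of $S$;
\item $b$ is adjacent to every vertex of $S$ (but not to $w$).
\end{itemize}
The graph is connected, so by Perron--Frobenius its spectral radius is a simple eigenvalue with a positive eigenvector.

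Next we use the partition $\{a\},\{b\},\{w\},S$. It is equitable: every vertex of $S$ has exactly one neighbour in $\{a\}$ and one in $\{b\}$, and none in $\{w\}\cup S$. The Perron vector is constant on the parts, because automorphisms permute $S$ transitively and the Perron vector is unique. Hence $\rho=\rho(\Smk)$ satisfies the quotient eigen-equations
\[
\rho x_a=x_b+x_w+kx_s,\qquad \rho x_b=x_a+kx_s,\qquad \rho x_w=x_a,\qquad \rho x_s=x_a+x_b,
\]
with all entries positive. Equivalently, $\rho$ is the largest eigenvalue of the $4\times 4$ quotient matrix $B$. This follows from the standard fact that the spectral radius of a graph equals that of any equitable quotient.

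It remains to compute the characteristic polynomial of $B$ by elimination.
\begin{enumerate}
\item The third and fourth equations give $x_w=x_a/\lambda$ and $x_s=(x_a+x_b)/\lambda$.
\item Substituting into the second equation gives $x_b(\lambda^2-k)=x_a(\lambda+k)$.
\item Substituting into the first equation and multiplying by $\lambda$ gives $(\lambda^2-1-k)x_a=(\lambda+k)x_b$.
\end{enumerate}
Eliminating $x_a,x_b$ yields
\[
(\lambda^2-1-k)(\lambda^2-k)=(\lambda+k)^2,
\]
that is,
\[
\lambda^4-(2k+2)\lambda^2-2k\lambda+k=0.
\]
With $k=(m-2)/2$ we have $2k+2=m$, $2k=m-2$ and $k=m/2-1$, so this is exactly $p_m(\lambda)$. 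It is a degree-$4$ polynomial, hence it is $\det(\lambda I-B)$.

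Therefore $\rho(\Smk)$ is the largest eigenvalue of $B$, which is the largest real root of $p_m$, namely $\rhoext(m)$. The only point needing care is justifying that the elimination (which divides by $\lambda$ and by $\lambda^2-k$) loses no roots. This is avoided by appealing to the quotient-matrix equality $\rho(\Smk)=\rho(B)$ and computing $\det(\lambda I-B)$ directly; the elimination above is just a shortcut for that determinant.
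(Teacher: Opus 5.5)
Your proof is correct and follows essentially the same route as the paper: your $S_{n,2}$ minus the edge $bw$ is the paper's $K_1\vee(K_{1,t}\cup K_1)$, and you use the same four-part equitable partition and the same quotient matrix, whose characteristic polynomial you identify as $p_m$. The phrase ``it is a degree-$4$ polynomial, hence it is $\det(\lambda I-B)$'' is not by itself a justification, but your elimination is exactly the Schur complement of the zero $\{w\}\cup S$ block of $B$, which gives $\det(\lambda I-B)=(\lambda^2-1-k)(\lambda^2-k)-(\lambda+k)^2$ as a polynomial identity, so the gap you flag is fully closed.
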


\begin{proof}
Write $t=(m-2)/2$. The graph $\Smk$ may be written as
\[
K_1\vee\left(K_{1,t}\cup K_1\right).
\]
With respect to the equitable partition consisting of the dominant vertex, the center of the star, the $t$ leaves of the star, and the remaining isolated vertex before the join, the quotient matrix is
\[
\begin{pmatrix}
0&1&t&1\\
1&0&t&0\\
1&1&0&0\\
1&0&0&0
\end{pmatrix}.
\]
Its characteristic polynomial is
\[
x^4-mx^2-(m-2)x+\frac m2-1=p_m(x).
\]
Since the quotient is irreducible and equitable, the Perron root of $\Smk$ is the largest real root of $p_m$, namely $\rhoext(m)$.
\end{proof}

\begin{lemma}\label{lem:comparison-admissible}
The graph $\Smk$ is $H(4,3)$-free and has no isolated vertices.
\end{lemma}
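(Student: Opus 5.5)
We work with the explicit description $\Smk\cong K_1\vee(K_{1,t}\cup K_1)$ with $t=(m-2)/2$, from the proof of Lemma~\ref{lem:comparison-graph-polynomial}. Label the dominant vertex $a$, the star center $b$, the leaves $\ell_1,\dots,\ell_t$, and the extra vertex $c$. The edges are $ab$, $ac$, $a\ell_i$, and $b\ell_i$. This gives $1+1+2t=m$ edges, and since $m\ge 18$ we have $t\ge 8$.

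Absence of isolated vertices is immediate, because $a$ is adjacent to every other vertex.

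For $H(4,3)$-freeness, we first list the cycles. The vertex $c$ has degree $1$, so it lies on no cycle. After deleting $c$, each $\ell_i$ has degree $2$ with neighbours $a$ and $b$, and $a,b$ are adjacent. So the cycles of the remaining graph are exactly:
\begin{itemize}
\item the triangles $ab\ell_i$;
\item the $4$-cycles $a\ell_i b\ell_j$ with $i\ne j$.
\end{itemize}
To see this, note that any cycle avoiding the edge $ab$ alternates between $\{a,b\}$ and the leaves, so it has length $4$. A cycle using $ab$ must return from $b$ to $a$ through a single leaf, so it is a triangle.

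In a copy of $H(4,3)$, the triangle and the $4$-cycle share exactly one vertex and are otherwise disjoint. Every triangle contains both $a$ and $b$, and so does every $4$-cycle. Hence any triangle and any $4$-cycle share at least the two vertices $a$ and $b$, and no copy of $H(4,3)$ exists. The subgraph argument needs no further care: any subgraph isomorphic to $H(4,3)$ would contain such a triangle and $4$-cycle meeting in exactly one vertex, which we have just ruled out.

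The only mildly delicate step is the completeness of the cycle enumeration, that is, that every triangle and every $4$-cycle passes through both $a$ and $b$. This follows from the degree-$2$ structure of the leaves noted above.
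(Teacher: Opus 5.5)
Your proposal is correct and follows essentially the same route as the paper. Both proofs label $K_1\vee(K_{1,t}\cup K_1)$, note that the pendant vertex lies on no cycle, and observe that every triangle and every $4$-cycle contains both the dominant vertex and the star center, so a triangle and a $4$-cycle can never share exactly one vertex. Your argument that the cycle list is complete (deleting the edge $ab$ leaves $K_{2,t}$) is a small justification the paper leaves implicit.
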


\begin{proof}
Write $t=(m-2)/2$ and
\[
\Smk=K_1\vee\left(K_{1,t}\cup K_1\right).
\]
Let $u^*$ be the joining vertex, let $c$ be the center of the star $K_{1,t}$, let $b_1,\dots,b_t$ be its leaves, and let $z$ be the isolated vertex before the join. After the join, $z$ is adjacent only to $u^*$, so it lies on no triangle and on no cycle.

Every triangle in $\Smk$ has the form
\[
u^*cb_i u^*
\]
for some $i$. Every $4$-cycle has the form
\[
u^*b_i c b_j u^*
\]
for distinct $i,j$. Hence every triangle and every $4$-cycle share at least the two vertices $u^*$ and $c$, and never exactly one vertex. Therefore $\Smk$ is $H(4,3)$-free. Also, all vertices have positive degree, so $\Smk$ has no isolated vertices.
\end{proof}

The comparison graph is $S^-_{\frac{m+4}{2},2}$. In the published fish-graph paper of Zhang and Wang, the even-size extremal value $\tilde\rho(m)$ is defined as the largest root of the same quartic $p_m$ and the equality case is exactly $S^-_{\frac{m+4}{2},2}$ for even $m\ge 51$~\cite[Theorem~1.4(ii)]{ZhangWangFish2025}. The even-size theorem of Pirzada and Rehman for $\{H(3,3),H(4,3)\}$-free graphs shows that the same comparison graph already occurs in the smaller forbidden-family problem~\cite{PirzadaRehman2025}. Throughout this note we write this common extremal value as $\rhoext(m)$. The basic lower bound on $\rhoext(m)$ needed later is elementary.

\begin{lemma}\label{lem:Lm-lower}
For every even $m\ge 6$,
\[
p_m(\Lm)=-\frac14.
\]
In particular,
\[
\rhoext(m)>\Lm.
\]
\end{lemma}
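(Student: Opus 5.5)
The plan is a direct algebraic evaluation, followed by a sign argument for the largest root. Since $\Lm=\frac{1+\sqrt{4m-5}}{2}$ is the positive root of $x^2-x-(m-1)=0$, it satisfies $\Lm^2=\Lm+m-1$. We use this relation to reduce $p_m(\Lm)$ to a polynomial of degree at most $1$ in $\Lm$, rather than expanding powers of $\sqrt{4m-5}$.

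First compute
\[
\Lm^4=(\Lm+m-1)^2=\Lm^2+2(m-1)\Lm+(m-1)^2=(2m-1)\Lm+(m-1)+(m-1)^2 .
\]
Also $m\Lm^2=m\Lm+m(m-1)$. Substituting these into $p_m(\Lm)=\Lm^4-m\Lm^2-(m-2)\Lm+\frac m2-1$, the coefficient of $\Lm$ is
\[
(2m-1)-m-(m-2)=1 ,
\]
and the constant term is
\[
(m-1)+(m-1)^2-m(m-1)+\frac m2-1=\frac m2-1 .
\]
This gives $p_m(\Lm)=\Lm+\frac m2-1$.

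That expression is positive, which contradicts the claimed value $-\tfrac14$, so an arithmetic slip is likely somewhere. The main obstacle is to redo this reduction carefully until it actually yields $-\frac14$. I would recheck the expansion $(\Lm+m-1)^2=\Lm^2+2(m-1)\Lm+(m-1)^2$ and then re-substitute $\Lm^2$. If the identity still does not come out, the alternative is to evaluate $p_m$ exactly using $s=\sqrt{4m-5}$. In that case I would write $\Lm^2=\frac{4m-4+2s}{4}$ and $\Lm^4=\bigl(\frac{2m-2+s}{2}\bigr)^2$, and collect the rational parts and the $s$-parts separately. The irrational parts must cancel, and the rational part should come to $-\frac14$. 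This is a routine but error-prone computation, and I would trust the explicit $s$-expansion over the shortcut.

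Once $p_m(\Lm)=-\frac14<0$ is established, the inequality $\rhoext(m)>\Lm$ follows from the intermediate value theorem. The polynomial $p_m$ is monic of degree $4$, so $p_m(x)\to+\infty$ as $x\to\infty$. Since $p_m(\Lm)<0$, $p_m$ has a real root strictly greater than $\Lm$. The largest real root $\rhoext(m)$ is therefore at least that root, and hence strictly exceeds $\Lm$. The hypothesis $m\ge6$ only ensures that $\Lm$ is real and the setting is nondegenerate; it plays no other role in the argument.
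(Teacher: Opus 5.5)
Your proposal does not establish the identity $p_m(\Lm)=-\frac14$, which is the whole content of the lemma, because the reduction rests on a false quadratic relation. The number $\Lm=\frac{1+\sqrt{4m-5}}{2}$ is not a root of $x^2-x-(m-1)$. That quadratic has discriminant $1+4(m-1)=4m-3$, so its positive root is $\frac{1+\sqrt{4m-3}}{2}$. Since $1+4\bigl(m-\frac32\bigr)=4m-5$, the correct relation is $\Lm^2=\Lm+m-\frac32$. Your own fallback formula already shows this: with $s=\sqrt{4m-5}$ you wrote $\Lm^2=\frac{4m-4+2s}{4}=m-1+\frac{s}{2}$, and $\frac{s}{2}=\Lm-\frac12$.

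Given the wrong relation, your arithmetic is actually correct: it does produce $\Lm+\frac m2-1$. So rechecking the expansion of $(\Lm+m-1)^2$, as you plan, would not find the error. The proposal then defers to an $s$-expansion that it never carries out, so the key step is missing as written.

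Your reduction strategy works once the relation is corrected. Put $c=m-\frac32$, so $\Lm^2=\Lm+c$. Then $\Lm^4=(\Lm+c)^2=(2c+1)\Lm+c(c+1)=(2m-2)\Lm+c(c+1)$, and $m\Lm^2=m\Lm+mc$. The coefficient of $\Lm$ in $p_m(\Lm)$ is $(2m-2)-m-(m-2)=0$. The constant term is $c(c+1-m)+\frac m2-1=-\frac{c}{2}+\frac m2-1=-\frac14$.

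This is a tidy way to carry out the ``direct substitution'' the paper asserts, and it avoids tracking powers of $\sqrt{4m-5}$. Your intermediate value argument for $\rhoext(m)>\Lm$ is correct and is the same as the paper's.
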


\begin{proof}
A direct substitution gives
\[
p_m\!\left(\frac{1+\sqrt{4m-5}}{2}\right)=-\frac14.
\]
Since $p_m(x)\to+\infty$ as $x\to\infty$, the largest real root $\rhoext(m)$ of $p_m$ satisfies $\rhoext(m)>\Lm$.
\end{proof}

We shall use the resulting explicit inequalities
\begin{equation}\label{eq:lower12}
\rho(G)^2-\frac12\rho(G)
>
\Lm^2-\frac12\Lm
=
m+\frac14\sqrt{4m-5}-\frac54,
\end{equation}
and
\begin{equation}\label{eq:lower32}
\rho(G)^2-\frac32\rho(G)
>
\Lm^2-\frac32\Lm
=
m-\frac14\sqrt{4m-5}-\frac74.
\end{equation}

The following lemma records the core Perron-neighborhood identities used throughout the paper.

\begin{lemma}\label{lem:core-ineq}
In the above setting,
\begin{align}
m&=|N|+e(\Ap)+e(N,W)+e(W),\label{eq:mcount}\\
\rho(G)^2-\rho(G)
&\le |N|+2e(\Ap)-|\Ap|+e(N,W)-\sum_{v\in \Az}\frac{x_v}{x_{u^*}},\label{eq:zz2}
\end{align}
and hence
\begin{equation}\label{eq:zz3}
0\le e(W)<e(\Ap)-|\Ap|+\frac32-\sum_{v\in \Az}\frac{x_v}{x_{u^*}}.
\end{equation}
Moreover,
\begin{equation}\label{eq:zz7}
\rho(G)^2-\rho(G)\Bigl(e(\Ap)-|\Ap|+\frac32-e(W)\Bigr)<2e(\Ap)+|\Ap|+e(\Ap,W).
\end{equation}
\end{lemma}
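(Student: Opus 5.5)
The plan is to normalize $x_{u^*}=1$, so $0<x_v\le 1$ for all $v$, and to work from the first two eigen-equations at $u^*$. Identity \eqref{eq:mcount} is pure edge counting. Every edge either contains $u^*$ (there are $|N|$ of these), lies inside $N$, joins $N$ to $W$, or lies inside $W$. No edge joins $u^*$ to $W$ by definition of $W$. Every edge inside $N$ lies inside $\Ap$, because vertices of $\Az$ have no neighbours in $N$; hence $e(N)=e(\Ap)$.

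For \eqref{eq:zz2}, I expand
\[
\rho^2=\rho^2x_{u^*}=\sum_{v\in N}\rho x_v=|N|+\sum_{v\in N}d_N(v)x_v+\sum_{w\in W}d_N(w)x_w,
\]
and subtract $\rho=\sum_{v\in N}x_v$. This gives
\[
\rho^2-\rho=|N|+\sum_{v\in\Ap}(d_N(v)-1)x_v-\sum_{v\in\Az}x_v+\sum_{w\in W}d_N(w)x_w .
\]
On $\Ap$ the coefficients satisfy $d_N(v)-1\ge 0$, so bounding $x_v\le 1$ there and for $w\in W$ yields $|N|+2e(\Ap)-|\Ap|+e(N,W)-\sum_{\Az}x_v$. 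For \eqref{eq:zz3}, I need the lower bound $\rho^2-\rho>m-\tfrac32$. It follows from $\rho\ge\rhoext(m)>\Lm$ (Lemma~\ref{lem:Lm-lower} together with extremality) and the identity $\Lm^2-\Lm=\tfrac{4m-5-1}{4}=m-\tfrac32$, since $x^2-x$ is increasing for $x\ge 1/2$. Substituting \eqref{eq:mcount} into \eqref{eq:zz2} to eliminate $|N|+e(N,W)$ gives $m-\tfrac32<m-e(W)+e(\Ap)-|\Ap|-\sum_{\Az}x_v$, which is \eqref{eq:zz3}.

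For \eqref{eq:zz7} I would sharpen the treatment of the exterior term instead of bounding each $x_w$ by $1$. I split
\[
\sum_{w\in W}d_N(w)x_w=\sum_{v\in\Ap}\sum_{w\in N_W(v)}x_w+\sum_{v\in\Az}\sum_{w\in N_W(v)}x_w .
\]
I use the eigen-equation at $v\in\Az$, namely $\rho x_v=1+\sum_{w\in N_W(v)}x_w$, to rewrite the $\Az$ part exactly. For the $\Ap$ part I use the eigen-equations at $v\in\Ap$, $\rho x_v=1+\sum_{N(v)\cap N}x_y+\sum_{N_W(v)}x_w$, so that the $\Ap$-part of the sum is $\rho\sum_{\Ap}x_v$ minus $|\Ap|+2e(\Ap)$-type terms. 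The aim is to collect everything into a single inequality in which $\rho$ multiplies $\sum_{\Ap}x_v$ or a comparable quantity, and in which the remaining constants are $2e(\Ap)+|\Ap|+e(\Ap,W)$.

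I would then combine this with \eqref{eq:mcount} and $\rho^2-\rho>m-\tfrac32$ exactly as before. The coefficient $e(\Ap)-|\Ap|+\tfrac32-e(W)$ of $\rho$ should arise from the same cancellation that produced \eqref{eq:zz3}: the slack $\tfrac32$ from $\Lm$, the count $e(\Ap)-|\Ap|$ from the $\Ap$ bound, and the subtraction of $e(W)$ via \eqref{eq:mcount}.

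The main obstacle is this last bookkeeping step. One must check that every term involving $\Az$ and $e(\Az,W)$ cancels or has the correct sign, using $\sum_{\Az}x_v\ge 0$ and $\rho>1$. The $\Ap$–$W$ interface must survive only as the additive constant $e(\Ap,W)$. If the direct rewriting does not close, I would first try multiplying \eqref{eq:zz2} through by $\rho$ (that is, passing to a third-power walk count at $u^*$) and bounding walks of length three from $u^*$ through $\Ap$ by $2e(\Ap)+|\Ap|+e(\Ap,W)$.
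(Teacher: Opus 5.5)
\textbf{Verdict: there is a genuine gap in the last step.} Your arguments for \eqref{eq:mcount}, \eqref{eq:zz2} and \eqref{eq:zz3} are correct and coincide with the paper's, including $\Lm^2-\Lm=m-\tfrac32$ and the monotonicity of $x^2-x$. The gap is \eqref{eq:zz7}: you leave it as an unresolved bookkeeping step, and the route you describe cannot close it. Substitute the eigen-equations at the vertices of $\Az$ and $\Ap$ exactly into your identity for $\rho^2-\rho$ (with $x_{u^*}=1$). Every term cancels and you get back $\rho^2-\rho=(\rho-1)\bigl(\sum_{v\in\Ap}x_v+\sum_{v\in\Az}x_v\bigr)=(\rho-1)\rho$, a tautology. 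Exact rewriting therefore produces no inequality. Moreover, the term $\rho\sum_{v\in\Az}x_v$ enters any upper bound for $\rho^2$ with a positive sign. So the facts $\sum_{v\in\Az}x_v\ge0$ and $\rho>1$ point the wrong way: what you need is an \emph{upper} bound on $\sum_{v\in\Az}x_v$. Your fallback is also off: $2e(\Ap)+|\Ap|+e(\Ap,W)=\sum_{v\in\Ap}d(v)$ counts walks of length two from $u^*$ through $\Ap$, not three, and no third power is needed.

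The missing observation is that \eqref{eq:zz3}, which you have already proved, is exactly that upper bound:
\[
\sum_{v\in\Az}x_v<\Bigl(e(\Ap)-|\Ap|+\tfrac32-e(W)\Bigr)x_{u^*}.
\]
Split $\rho^2x_{u^*}=\rho\sum_{v\in\Ap}x_v+\rho\sum_{v\in\Az}x_v$. Each $v\in\Ap$ is adjacent to $u^*$, to $d_{\Ap}(v)$ vertices of $\Ap$ (never to a vertex of $\Az$), and to $d_W(v)$ vertices of $W$. The eigen-equations therefore give $\rho\sum_{v\in\Ap}x_v\le\bigl(|\Ap|+2e(\Ap)+e(\Ap,W)\bigr)x_{u^*}$. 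For the second term, multiply the bound from \eqref{eq:zz3} by $\rho>0$. Adding and dividing by $x_{u^*}$ yields \eqref{eq:zz7} with strict inequality. This also holds when $\Az=\varnothing$, since \eqref{eq:zz3} then says the coefficient of $\rho$ is positive. No further appeal to \eqref{eq:mcount} or to the lower bound on $\rho^2-\rho$ is needed here; that information enters only through \eqref{eq:zz3}.
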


\begin{proof}
Since $\Az$ has no neighbors inside $N$, every edge of $G$ is either incident with $u^*$, inside $G[\Ap]$, between $N$ and $W$, or inside $W$, which gives \eqref{eq:mcount}.

Next,
\[
\rho x_{u^*}=\sum_{v\in N}x_v=\sum_{v\in \Ap}x_v+\sum_{v\in \Az}x_v.
\]
Also,
\[
\rho^2x_{u^*}
=\sum_{v\in N}\rho x_v
=\sum_{v\in N}\sum_{u\in N(v)}x_u.
\]
Counting contributions according to whether $u=u^*$, $u\in \Ap$, or $u\in W$, we obtain
\[
\rho^2x_{u^*}
=
|N|x_{u^*}
+\sum_{v\in \Ap}d_N(v)x_v
+\sum_{w\in W}d_N(w)x_w.
\]
Subtracting the Perron equation at $u^*$ yields
\[
(\rho^2-\rho)x_{u^*}
=
|N|x_{u^*}
+\sum_{v\in \Ap}(d_N(v)-1)x_v
+\sum_{w\in W}d_N(w)x_w
-\sum_{v\in \Az}x_v.
\]
Since $x_v\le x_{u^*}$ for every vertex $v$,
\[
\sum_{v\in \Ap}(d_N(v)-1)x_v
\le
\Bigl(\sum_{v\in \Ap}(d_N(v)-1)\Bigr)x_{u^*}
=
(2e(\Ap)-|\Ap|)x_{u^*},
\]
and
\[
\sum_{w\in W}d_N(w)x_w\le e(N,W)x_{u^*}.
\]
Dividing through by $x_{u^*}$ gives \eqref{eq:zz2}.

Combining \eqref{eq:mcount}, \eqref{eq:zz2}, and the lower bound
\[
\rho(G)^2-\rho(G)>m-\frac32
\]
coming from Lemma~\ref{lem:Lm-lower} yields \eqref{eq:zz3}.

Finally, from
\[
\rho x_{u^*}=\sum_{v\in \Ap}x_v+\sum_{v\in \Az}x_v
\]
and \eqref{eq:zz3}, one gets
\[
\sum_{v\in \Az}x_v
<
\Bigl(e(\Ap)-|\Ap|+\frac32-e(W)\Bigr)x_{u^*}.
\]
Now
\[
\rho\sum_{v\in \Ap}x_v
=
\sum_{v\in \Ap}\sum_{u\in N(v)}x_u
\le
\bigl(2e(\Ap)+|\Ap|+e(\Ap,W)\bigr)x_{u^*},
\]
since every vertex of $\Ap$ contributes one neighbor $u^*$, its neighbors inside $\Ap$, and its neighbors in $W$. Multiplying the bound on $\sum_{v\in \Az}x_v$ by $\rho$ and adding yields \eqref{eq:zz7}.
\end{proof}

The quantity $e(\Ap,W)$ is the only interface term in \eqref{eq:zz7} not controlled directly by $e(\Ap)$, $|\Ap|$, and $e(W)$. The main structural step later in the paper is a local principle that controls this term in the $K_4$-core branch.

\begin{lemma}[Edge moving]\label{lem:edge-moving}
Let $G$ be a connected graph with positive Perron vector $x$. Suppose $G'$ is obtained from $G$ by deleting the edges $vs$ and adding the edges $us$ for all $s$ in a nonempty set
\[
S\subseteq N_G(v)\setminus (N_G(u)\cup\{u\}).
\]
If $x_u\ge x_v$, then $\rho(G')>\rho(G)$.
\end{lemma}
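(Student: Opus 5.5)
The plan is the standard Rayleigh-quotient comparison, using the Perron vector $x$ of $G$ as a test vector for $G'$. Normalize $x$ so that $\|x\|_2=1$; since $G$ is connected, $x$ is positive and $\rho(G)=x^{\top}A(G)x$. Write $A=A(G)$ and $A'=A(G')$. The hypothesis $S\subseteq N_G(v)\setminus(N_G(u)\cup\{u\})$ guarantees that each deleted edge $vs$ exists in $G$, while each added edge $us$ is new and is not a loop. Hence $G'$ is a simple graph on the same vertex set, and $A'-A$ is supported on the pairs $\{v,s\}$ and $\{u,s\}$ with $s\in S$. Therefore
\[
x^{\top}A'x-x^{\top}Ax=2\sum_{s\in S}x_s\,(x_u-x_v)\ge 0,
\]
because $x_u\ge x_v$. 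By the Rayleigh principle,
\[
\rho(G')\ge x^{\top}A'x\ge x^{\top}Ax=\rho(G).
\]

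The main obstacle is strictness. We argue by contradiction and suppose that $\rho(G')=\rho(G)$. Then every inequality in the chain above is an equality. In particular $x^{\top}A'x=\rho(G')$, so $x$ attains the maximum of the Rayleigh quotient for the symmetric matrix $A'$. A maximizer of the Rayleigh quotient is an eigenvector for the largest eigenvalue, so $A'x=\rho(G)x$. Note that $G'$ need not be connected, since $v$ may become isolated. The argument uses only the symmetric variational characterization, so no connectivity of $G'$ is needed here.

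Now compare the eigen-equations at $u$. In $G$ we have $\rho(G)x_u=\sum_{w\in N_G(u)}x_w$. In $G'$ we have $N_{G'}(u)=N_G(u)\cup S$, and this union is disjoint because $S\cap N_G(u)=\emptyset$. So
\[
\rho(G)x_u=(A'x)_u=\sum_{w\in N_G(u)}x_w+\sum_{s\in S}x_s=\rho(G)x_u+\sum_{s\in S}x_s.
\]
This forces $\sum_{s\in S}x_s=0$. That is impossible, since $S$ is nonempty and $x$ is strictly positive. Hence $\rho(G')>\rho(G)$. The only delicate point is keeping the eigenvector argument inside the symmetric variational framework rather than invoking Perron–Frobenius for $G'$. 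An alternative route would check the equation at $v$, but the vertex $u$ gives the cleanest contradiction.
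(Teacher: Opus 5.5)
Your proposal is correct and follows the paper's proof essentially line by line. You use the Rayleigh-quotient comparison with the Perron vector of $G$, then note that equality would force $x$ to be an eigenvector of $A(G')$, which the eigen-equation at $u$ contradicts; your remark that this step uses only the symmetric variational characterization, not connectivity of $G'$, correctly spells out something the paper leaves implicit.
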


\begin{proof}
At the vector $x$,
\[
x^{\top}A(G')x-x^{\top}A(G)x
=
2\sum_{s\in S}x_s(x_u-x_v)\ge 0.
\]
Thus $\rho(G')\ge \rho(G)$ by the Rayleigh principle. If equality held, then the positive vector $x$ would also be an eigenvector of $A(G')$ for its spectral radius. But
\[
(A(G')x)_u=(A(G)x)_u+\sum_{s\in S}x_s>\rho(G)x_u,
\]
which contradicts $A(G')x=\rho(G')x=\rho(G)x$. Hence $\rho(G')>\rho(G)$.
\end{proof}

\begin{lemma}\label{lem:W-degree}
Every vertex $w\in W$ has degree at least $2$.
\end{lemma}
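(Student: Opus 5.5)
Since $G$ is connected and $w\in W$ is not adjacent to $u^*$, we have $d(w)\ge1$. We argue by contradiction: suppose some $w\in W$ has degree exactly $1$, with unique neighbour $s$. We will move this pendant edge onto $u^*$ using Lemma~\ref{lem:edge-moving}, and then check that the resulting graph is still admissible but has strictly larger spectral radius.

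\textbf{The edge move.} Let $G'$ be obtained from $G$ by deleting $ws$ and adding $wu^*$. This is the edge-moving operation of Lemma~\ref{lem:edge-moving}, applied with the roles
\[
u=u^*,\qquad v=s,\qquad S=\{w\}.
\]
The hypotheses hold for the following reasons.
\begin{itemize}
\item $w\in N_G(s)$.
\item $w\notin N_G(u^*)\cup\{u^*\}$, because $w\in W$.
\item $x_{u^*}\ge x_s$, by the choice of $u^*$ as a maximum Perron entry.
\end{itemize}
Hence $\rho(G')>\rho(G)$.

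\textbf{$G'$ is admissible.} It has the same number $m$ of edges. It has no isolated vertices: $w$ is now adjacent to $u^*$, and $s$ lies on a path from $u^*$ to $w$ in the connected graph $G$, so $s$ has another neighbour besides $w$. In fact $G'$ is connected, since $G-w$ is connected ($w$ is a leaf) and $w$ attaches to $u^*$.

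\textbf{$G'$ is $H(4,3)$-free.} In $G'$ the vertex $w$ still has degree $1$, so it lies on no cycle. Every triangle and every $4$-cycle of $G'$ therefore avoids $w$, and so lies in $G'-w=G-w$. Any copy of $H(4,3)$ in $G'$ would thus already be a copy in $G$, which is impossible.

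\textbf{Conclusion.} $G'$ is an admissible graph of size $m$ with $\rho(G')>\rho(G)$. This contradicts the extremality of $G$, so every $w\in W$ has degree at least $2$.

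The only step needing care is the admissibility check, namely that moving a leaf creates no forbidden copy. The argument above disposes of it because a leaf lies on no cycle. No threshold on $m$ is used anywhere.
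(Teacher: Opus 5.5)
Your proof is correct and is the paper's argument: move the pendant edge $ws$ to $wu^*$ via Lemma~\ref{lem:edge-moving}, check that no isolated vertex is created, and note that a degree-one vertex lies on no triangle or $4$-cycle, so no fish is created. The only difference is cosmetic: you show that $s$ keeps another neighbour using a $u^*$--$w$ path, which needs no size assumption, whereas the paper says that otherwise $G$ would be the single edge $ws$, which is impossible for $m\ge 18$.
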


\begin{proof}
Suppose $d(w)=1$, and let $v$ be the unique neighbor of $w$. Form
\[
G'=G-wv+wu^*.
\]
Since $w\notin N[u^*]$ and $x_{u^*}\ge x_v$, Lemma~\ref{lem:edge-moving} gives $\rho(G')>\rho(G)$. The vertex $v$ does not become isolated after deleting $wv$; otherwise the connected graph $G$ would be the single edge $wv$, which is impossible in the even-size range considered here. Thus $G'$ has no isolated vertices.

Finally, $G'$ is still $H(4,3)$-free: any new copy would have to use the new edge $wu^*$, but $w$ has degree $1$ in $G'$, so it lies on neither a triangle nor a $4$-cycle. This contradicts the extremality of $G$.
\end{proof}

For explicit comparison families we shall repeatedly use the following decomposition lemma.

\begin{lemma}[Decomposition comparison]\label{lem:root-template}
Let $f_m(x)$ be the characteristic polynomial of an explicit family $F_m$, and suppose that
\[
f_m(x)=q_m(x)p_m(x)+R_m(x),
\]
where $p_m(x)=x^4-mx^2-(m-2)x+\frac{m}{2}-1$. Let $\alpha_m$ be the Perron root of $F_m$. Assume there exists a lower bound $x_0=x_0(m)$ such that $\rhoext(m)\ge x_0$ and
\begin{enumerate}[label={\rm(\roman*)}]
\item $q_m(x)\ge 0$ for all $x\ge x_0$,
\item $R_m(x)>0$ for all $x\ge x_0$.
\end{enumerate}
Then $\alpha_m<\rhoext(m)$.
\end{lemma}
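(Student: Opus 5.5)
The plan is to evaluate the identity $f_m=q_mp_m+R_m$ at the Perron root $\alpha_m$ of $F_m$ and compare with $\rhoext(m)$. Since $\alpha_m$ is the largest real root of $f_m$ (it is an eigenvalue of the symmetric adjacency matrix, so every root of $f_m$ is real and $\alpha_m$ is the largest), we have $f_m(x)>0$ for all $x>\alpha_m$. I would argue by contradiction: suppose $\alpha_m\ge\rhoext(m)$.

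\emph{Step 1: the point $\alpha_m$ lies in the region where the hypotheses apply.} From $\rhoext(m)\ge x_0$ and the assumption $\alpha_m\ge\rhoext(m)$ we get $\alpha_m\ge x_0$. Hypotheses (i) and (ii) therefore apply at $x=\alpha_m$.

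\emph{Step 2: analyse the sign of $p_m(\alpha_m)$.} Because $\rhoext(m)$ is the largest real root of the monic polynomial $p_m$, we have $p_m(x)\ge0$ for all $x\ge\rhoext(m)$. In particular $p_m(\alpha_m)\ge0$.

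\emph{Step 3: derive the contradiction.} Evaluating the decomposition at $\alpha_m$ gives
\[
0=f_m(\alpha_m)=q_m(\alpha_m)p_m(\alpha_m)+R_m(\alpha_m).
\]
By Step 2 and hypothesis (i), the product $q_m(\alpha_m)p_m(\alpha_m)$ is nonnegative. By hypothesis (ii), $R_m(\alpha_m)>0$. Hence the right-hand side is strictly positive, which is impossible. Therefore $\alpha_m<\rhoext(m)$.

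The only point needing care is Step 1: hypotheses (i) and (ii) are known only on $[x_0,\infty)$, so the argument must place $\alpha_m$ in that range before using them. Proceeding by contradiction does exactly this, since the assumption $\alpha_m\ge\rhoext(m)\ge x_0$ supplies the needed inequality. No property of $f_m$ is used beyond the fact that $\alpha_m$ is one of its real roots. Nor is any property of $q_m$ used beyond its sign on $[x_0,\infty)$. All other steps are immediate.
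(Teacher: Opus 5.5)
Your proof is correct and is essentially the paper's argument. The paper shows $f_m(x)=q_m(x)p_m(x)+R_m(x)>0$ for every $x\ge\rhoext(m)\ge x_0$, so $f_m$ has no real root in $[\rhoext(m),\infty)$; you evaluate the same identity at the single point $\alpha_m$ under the contrary assumption $\alpha_m\ge\rhoext(m)$, using only that $\alpha_m$ is a root of $f_m$.
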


\begin{proof}
Since $\rhoext(m)$ is the largest real root of $p_m$, one has $p_m(x)\ge 0$ for all $x\ge \rhoext(m)$. Hence for every $x\ge \rhoext(m)\ge x_0$,
\[
f_m(x)=q_m(x)p_m(x)+R_m(x)>0
\]
by \rm(i) and \rm(ii). Therefore $f_m$ has no real root in $[\rhoext(m),\infty)$, so its Perron root $\alpha_m$ satisfies $\alpha_m<\rhoext(m)$.
\end{proof}

\section{Reduction to the threshold-bearing $K_4$-branch}\label{sec:reduction}

The Perron-neighborhood inequalities reduce the problem to the structure of the induced graph $G[\Ap]$ and the interface between $\Ap$ and $W$. The purpose of this section is to show that, under the hypothesis $\rho(G)\ge\rho'(m)$, every branch is eliminated or terminal except the $K_4$-core branch with small interface. This isolates the only part of the proof where the sharp threshold can enter.

\begin{lemma}\label{lem:Aplus-structure}
One has $\Ap\neq\varnothing$, the graph $G[\Ap]$ is connected, and $G[\Ap]$ is $(P_2\cup P_3)$-free.
\end{lemma}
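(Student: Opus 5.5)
The plan is to prove the three assertions separately. Nonemptiness and disconnectedness are handled by the Perron inequality \eqref{eq:zz3}; the $(P_2\cup P_3)$-freeness is a direct forbidden-subgraph argument using that $u^*$ is adjacent to every vertex of $\Ap$.

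\emph{Nonemptiness.} Suppose $\Ap=\varnothing$. Then $e(\Ap)=|\Ap|=0$ and $\Az=N$, so \eqref{eq:zz3} gives
\[
\sum_{v\in N}\frac{x_v}{x_{u^*}}<\frac32 .
\]
The Perron equation at $u^*$ says that this sum equals $\rho(G)$. Since $\rho(G)\ge\rhoext(m)>\Lm>3/2$ by Lemma~\ref{lem:Lm-lower}, this is a contradiction.

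\emph{$(P_2\cup P_3)$-freeness.} Suppose $G[\Ap]$ contains vertex-disjoint subgraphs: an edge $ab$ and a path $cde$. Every vertex of $\Ap\subseteq N$ is adjacent to $u^*$, so:
\begin{itemize}
\item $u^*ab$ is a triangle;
\item $u^*cdeu^*$ is a $4$-cycle.
\end{itemize}
These two share only the vertex $u^*$, so together they form a copy of $H(4,3)$. This contradicts the hypothesis, so $G[\Ap]$ is $(P_2\cup P_3)$-free.

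\emph{Connectedness.} By definition every vertex of $\Ap$ has a neighbor in $N$. Any such neighbor itself has a neighbor in $N$, so it also lies in $\Ap$. Hence $G[\Ap]$ has no isolated vertices, and every component contains an edge.

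Suppose $G[\Ap]$ is disconnected. If some component contained a path on three vertices, then together with an edge from another component it would give a $P_2\cup P_3$, which we have excluded. So every component has no $P_3$ and no isolated vertex, hence is a single edge. Thus $G[\Ap]$ is a matching with $k\ge 2$ edges, and
\[
e(\Ap)-|\Ap|=k-2k=-k\le-2 .
\]
Substituting into \eqref{eq:zz3} gives
\[
0\le e(W)<-2+\tfrac32-\sum_{v\in\Az}\frac{x_v}{x_{u^*}}<0,
\]
which is a contradiction. Hence $G[\Ap]$ is connected.

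No step here is a real obstacle. The only point needing care is the observation that components of $G[\Ap]$ have no isolated vertices, which is what forces a disconnected $(P_2\cup P_3)$-free graph to be a perfect matching on $\Ap$.
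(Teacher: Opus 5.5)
Your proof is correct and follows the paper's argument for the $(P_2\cup P_3)$-freeness and connectedness parts; your check that $G[\Ap]$ has no isolated vertices fills in a step the paper states without comment. For nonemptiness you substitute $\Ap=\varnothing$ into \eqref{eq:zz3} and use $\sum_{v\in N}x_v/x_{u^*}=\rho(G)$ to force $\rho(G)<3/2$, whereas the paper redoes the two-step Perron count to get $\rho(G)^2\le m<\Lm^2$; both come from the same computation, so this is only a more economical repackaging.
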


\begin{proof}
First suppose that $\Ap=\varnothing$. Then $N=\Az$, so there are no edges inside $N$. Applying the Perron equation twice at $u^*$ gives
\[
\rho(G)^2x_{u^*}
=\sum_{v\in N}\rho(G)x_v
=|N|x_{u^*}+\sum_{w\in W}d_N(w)x_w.
\]
Since $x_w\le x_{u^*}$ for every $w\in W$, we obtain
\[
\rho(G)^2x_{u^*}
\le \bigl(|N|+e(N,W)\bigr)x_{u^*}.
\]
Because $\Ap=\varnothing$, the edge decomposition gives
\[
m=|N|+e(N,W)+e(W),
\]
and hence $|N|+e(N,W)\le m$. Therefore
\[
\rho(G)^2\le m.
\]
On the other hand, since $G$ is extremal relative to the comparison graph, $\rho(G)\ge \rhoext(m)>\Lm$ by Lemma~\ref{lem:Lm-lower}. But
\[
\Lm^2=m-1+\frac12\sqrt{4m-5}>m
\]
for $m\ge 6$, a contradiction. Thus $\Ap\neq\varnothing$.

Next, if $G[\Ap]$ contained a copy of $P_2\cup P_3$, say an edge $ab$ disjoint from a path $xyz$, then $u^*ab u^*$ is a triangle and $u^*xyzu^*$ is a $4$-cycle. These two subgraphs share exactly the vertex $u^*$, so they form a copy of $H(4,3)$, a contradiction. Thus $G[\Ap]$ is $(P_2\cup P_3)$-free.

Finally, suppose $G[\Ap]$ were disconnected. Since every vertex of $\Ap$ has positive degree in $G[\Ap]$, each component contains an edge. Because $G[\Ap]$ is $(P_2\cup P_3)$-free, no component can contain a $P_3$; otherwise a $P_3$ in that component together with an edge in another component would give a copy of $P_2\cup P_3$. Hence every component is a single edge, and
\[
G[\Ap]\cong kP_2
\qquad (k\ge 2).
\]
Thus
\[
e(\Ap)-|\Ap|=k-2k=-k\le -2.
\]
This contradicts \eqref{eq:zz3}, since its right-hand side is then strictly less than $-1/2$ while $e(W)\ge 0$.
\end{proof}

Following~\cite[Figure~1]{ZhengZhang2025}, we use the notation
\[
H_1=P_4,\qquad H_2=C_3,\qquad H_3=\text{the triangle with a pendant edge},
\]
\[
H_4=C_4,\qquad H_5=K_4-e,\qquad H_6=K_4,
\]
and we write $H_7$ for the star $K_{1,t}$ with $t\ge 1$.

\begin{lemma}\label{lem:claim33}
Let $H$ be a connected $(P_2\cup P_3)$-free graph with minimum degree at least $1$. Then $H$ is isomorphic to exactly one of $H_1,\dots,H_7$.
\end{lemma}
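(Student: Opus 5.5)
The plan is to classify by the length of a longest path. Let $H$ be connected, $(P_2\cup P_3)$-free, with $\delta(H)\ge1$. The "exactly one" part is immediate, because the seven graphs are pairwise non-isomorphic. They are distinguished by vertex and edge counts together with cycle structure: $H_1$ through $H_6$ have $4,3,4,4,4,4$ vertices and $3,3,4,4,5,6$ edges, $H_3$ contains a triangle while $H_4$ does not, $H_1$ is not a star, and $H_2,\dots,H_6$ contain cycles while stars do not. (The star $K_{1,3}$ has $4$ vertices and $3$ edges, as does $H_1=P_4$, but $P_4$ is not a star.) So the real work is existence.

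\textbf{Step 1: bounding the order.} If $H$ has a path on $5$ vertices $v_1v_2v_3v_4v_5$, then the edge $v_1v_2$ and the path $v_3v_4v_5$ form a $P_2\cup P_3$, a contradiction. Hence every path in $H$ has at most $4$ vertices.

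\textbf{Step 2: the case with a $P_4$.} Suppose $H$ contains a path $P=abcd$, and suppose some vertex $y\notin\{a,b,c,d\}$ exists. By connectivity we may take $y$ adjacent to some vertex of $P$.
\begin{itemize}
\item If $y\sim a$, then $yabcd$ is a $P_5$. The case $y\sim d$ is symmetric.
\item If $y\sim b$, then the edge $cd$ and the path $yba$ form a $P_2\cup P_3$. The case $y\sim c$ is symmetric.
\end{itemize}
Every case is a contradiction, so $V(H)=\{a,b,c,d\}$. Any connected graph on $4$ vertices containing a spanning $P_4$ is one of $P_4$, the paw $H_3$, $C_4$, $K_4-e$, or $K_4$. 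The only other such graph would be $K_{1,3}$, but it has no $P_4$.

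\textbf{Step 3: the case with no $P_4$.} Then $H$ is a connected graph with no path on $4$ vertices. It is standard, and I would prove it directly, that such a graph is a triangle or a star. If $H$ contains a cycle, a longest cycle must be a triangle, since $C_k$ with $k\ge4$ contains a $P_4$. Any further vertex attached to that triangle would create a $P_4$, so $H=C_3$. If $H$ is acyclic, it is a tree of diameter at most $2$, hence a star $K_{1,t}$ with $t\ge1$, using $\delta\ge1$. Note that $K_{1,t}$ is $(P_2\cup P_3)$-free, since any two of its edges share the center.

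\textbf{Main obstacle.} The only care needed is in Step 2: enumerating the $4$-vertex supergraphs of $P_4$ and checking that none is missed. The five graphs listed are all the graphs obtained by adding subsets of the chords $ac$, $bd$, $ad$ to $P$, up to isomorphism, so the enumeration closes.
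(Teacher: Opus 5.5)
Your proof is correct, but it takes a different route from the paper's. The paper first splits $H$ into the acyclic and cyclic cases. For trees, it uses $P_5$-freeness to get diameter at most $3$, so $H$ is a star or a double star, and it then rules out every double star except $P_4$. For cyclic $H$, it treats two subcases separately: when a $4$-cycle $C$ is present, a shortest-path argument gives $V(H)=V(C)$; when every cycle is a triangle, it shows each outside vertex is adjacent to the triangle and that there is at most one such vertex, then enumerates how that vertex can attach.

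You instead split on whether $H$ contains a $P_4$. Your one observation, that a vertex $y$ adjacent to a path $abcd$ yields either a $P_5$ or the disjoint pair $yba$, $cd$, does the work of all three of the paper's order-bounding arguments at once. The non-star tree, the $4$-cycle case and the triangle-plus-vertex case all contain a $P_4$, so each is forced to have exactly four vertices. What remains is the enumeration of the chords $ac,bd,ad$, and the $P_4$-free case is the standard triangle-or-star fact, which you prove directly.

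Your route is more uniform and shorter. The paper's route tracks cycle structure explicitly but repeats essentially the same local obstruction three times. You also verify the pairwise non-isomorphism needed for the ``exactly one'' clause, which the paper leaves implicit.
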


\begin{proof}
Since $P_5$ contains a copy of $P_2\cup P_3$ as a subgraph, the graph $H$ is $P_5$-free. We distinguish according to whether $H$ is acyclic.

Suppose first that $H$ is a tree. If $H$ is a star, then $H\cong H_7$. Assume now that $H$ is not a star. Since $H$ is $P_5$-free, its diameter is at most $3$, so $H$ is a double star with central edge $uv$. If both centers have at least one leaf and one of them has at least two leaves, say $v$ has distinct leaves $v_1,v_2$ and $u$ has a leaf $u_1$, then $v_1vv_2$ is a $P_3$ and $uu_1$ is a disjoint $P_2$. Thus this case is impossible. The remaining non-star double star is the path $P_4=H_1$.

Assume next that $H$ contains a cycle. Since $H$ is $P_5$-free, every cycle of $H$ has length at most $4$.

If $H$ contains a $4$-cycle $C$, then $V(H)=V(C)$. Indeed, if some vertex lay outside $C$, take a shortest path
\[
z_0z_1\cdots z_k
\]
from $C$ to that vertex, with $z_0\in V(C)$ and $z_1,\dots,z_k\notin V(C)$. The path $z_0z_1z_2$ if $k\ge 2$, and the path $z_1z_0z'$ if $k=1$ where $z'$ is a neighbor of $z_0$ on $C$, together with an edge of $C$ disjoint from that path, gives a copy of $P_2\cup P_3$. This is impossible. Thus $H$ has order $4$ and contains $C_4$. The only connected graphs on these four vertices containing $C_4$ are $C_4$, $K_4-e$, and $K_4$, namely $H_4$, $H_5$, and $H_6$.

It remains to consider the case when every cycle of $H$ is a triangle. Let $T=abc$ be a triangle in $H$. First, every vertex outside $T$ is adjacent to a vertex of $T$. Otherwise, a shortest path from $T$ to such a vertex begins with a path $vxy$, where $v\in V(T)$ and $x,y\notin V(T)$; this path together with the edge of $T$ opposite $v$ gives a copy of $P_2\cup P_3$.

We claim that $|V(H)\setminus V(T)|\le 1$. Suppose not, and choose distinct vertices $x,y\notin V(T)$. By the previous paragraph, both have neighbors on $T$. Without loss of generality let $x$ be adjacent to $a$. If $y$ is adjacent to $a$, then $xay$ is a $P_3$ and $bc$ is a disjoint $P_2$. If $y$ is adjacent to $b$, then $xac$ is a $P_3$ and $yb$ is a disjoint $P_2$. If $y$ is adjacent to $c$, then $xab$ is a $P_3$ and $yc$ is a disjoint $P_2$. In every case we obtain a contradiction.

If no vertex lies outside $T$, then $H=T\cong C_3=H_2$. If there is exactly one vertex $x\notin V(T)$, connectedness gives a neighbor of $x$ on $T$. Since $H$ has only four vertices, the possibilities are: $x$ adjacent to exactly one vertex of $T$, giving the triangle with a pendant edge $H_3$; $x$ adjacent to exactly two vertices of $T$, giving $K_4-e=H_5$; or $x$ adjacent to all three vertices of $T$, giving $K_4=H_6$.

These cases exhaust all connected $(P_2\cup P_3)$-free graphs with minimum degree at least $1$.
\end{proof}

\begin{lemma}\label{lem:nonK4-case1}
Assume $m\ge 18$ is even, $e(\Ap,W)\le 3$, and $G[\Ap]\cong H_i$ for some $i\in\{1,2,3,4,5\}$. Then $\rho(G)<\rhoext(m)$.
\end{lemma}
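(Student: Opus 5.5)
We argue by contradiction: assume $\rho(G)\ge\rhoext(m)$ (as extremality gives), so that \eqref{eq:zz3}, \eqref{eq:zz7}, \eqref{eq:lower12} and \eqref{eq:lower32} are all available. We then show that \eqref{eq:zz7} cannot hold for any of $H_1,\dots,H_5$ once $e(\Ap,W)\le 3$ and $m\ge 18$.

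The first step is to tabulate the pairs $(|\Ap|,e(\Ap))$:
\begin{center}
\begin{tabular}{@{}lccccc@{}}
\toprule
 & $H_1$ & $H_2$ & $H_3$ & $H_4$ & $H_5$\\
\midrule
$(|\Ap|,e(\Ap))$ & $(4,3)$ & $(3,3)$ & $(4,4)$ & $(4,4)$ & $(4,5)$\\
$c:=e(\Ap)-|\Ap|+\tfrac32$ & $\tfrac12$ & $\tfrac32$ & $\tfrac32$ & $\tfrac32$ & $\tfrac52$\\
\bottomrule
\end{tabular}
\end{center}
By \eqref{eq:zz3}, $0\le e(W)<c$. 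This already forces $e(W)=0$ for $H_1$ through $H_4$, and $e(W)\in\{0,1,2\}$ for $H_5$. In each admissible case, \eqref{eq:zz7} becomes
\[
\rho^2-(c-e(W))\rho<2e(\Ap)+|\Ap|+e(\Ap,W)\le 2e(\Ap)+|\Ap|+3,
\]
and the right-hand side is a small constant, at most $17$ (attained for $H_5$).

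The second step compares this with the lower bounds. The coefficient $c-e(W)$ is at most $\tfrac52$. When it is at most $\tfrac32$, \eqref{eq:lower32} gives a left-hand side exceeding $m-\tfrac14\sqrt{4m-5}-\tfrac74$. At $m=18$ this is about $14.2$, which beats $2e(\Ap)+|\Ap|+3$ in the $H_1$ ($13$) and $H_2$ ($12$) cases, but not in the $H_3$, $H_4$ ($15$) or $H_5$ ($17$) cases. So the crude bound is insufficient near $m=18$, and this is where I expect the main obstacle. The worst case is $H_5$ with $e(W)=0$, where the coefficient is $\tfrac52$.

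For these endpoint cases I would sharpen the estimates instead of using the crude maxima:
\begin{enumerate}[label=(\roman*)]
\item Use the exact $\rho^2-c\rho$ together with $\rho>\Lm$. At $m=18$, $\Lm\approx4.77$, so $\rho^2-\tfrac52\rho>\Lm^2-\tfrac52\Lm\approx 10.8$; this is far too weak against $17$.
\item Therefore exploit structure. Since $H(4,3)$ must be avoided, a vertex $w\in W$ adjacent to some $v\in\Ap$ lying on a triangle $u^*vv'$ cannot have another path of length three back to $u^*$ avoiding that triangle. For $K_4-e$ and $C_4$, every vertex of $\Ap$ lies on a $4$-cycle through $u^*$ with two vertices of $\Ap$, and on a triangle $u^*ab$. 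I expect this forces $e(\Ap,W)$ to be tiny, likely $0$, or forces $W$-neighbors to coincide.
\item Additionally refine \eqref{eq:zz7} by bounding $\sum_{v\in\Ap}x_v$ through the Perron equation at each $v$, as the paper does, but keeping the $e(\Ap)$ term as $\sum_{v}d_{\Ap}(v)x_v$ with $x_v\le x_{u^*}$, and using $m=|N|+e(\Ap)+e(N,W)+e(W)$ to show $|\Az|$ is large, so $\rho$ grows like $\sqrt m$.
\end{enumerate}

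If the case $m=18$ still fails, I would fall back on finite analysis. With $e(W)$ and $e(\Ap,W)$ bounded, $G$ is $u^*$ joined to $\Ap\cup\Az$ with few extra edges. Lemma~\ref{lem:W-degree} then forces every $w\in W$ to have at least $2$ neighbors, and $W$ vertices adjacent to $\Az$ to sit in restricted configurations. This leaves finitely many quotient families, each to be compared to $p_m$ via Lemma~\ref{lem:root-template} with $x_0=\Lm$ from Lemma~\ref{lem:Lm-lower}.
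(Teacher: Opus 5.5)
Your outline has the same overall shape as the paper's proof (crude bounds from \eqref{eq:zz3} and \eqref{eq:zz7}, then local fish obstructions, then finite quotient comparisons), and you correctly single out $H_5$ with $e(W)=0$ as the worst case. However, the case split is wrong from the start.

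For $H_2,H_3,H_4$ one has $c=\tfrac32$, so \eqref{eq:zz3} gives only $e(W)\le 1$, not $e(W)=0$. The omitted $H_3,H_4$ subcases, and $H_5$ with $e(W)=2$, close only because the coefficient in \eqref{eq:zz7} drops to $\tfrac12$, so that \eqref{eq:lower12} applies (about $18.8$ at $m=18$). The bound \eqref{eq:lower32}, which is all you invoke, gives about $14.2$ and does not beat the right-hand sides $15$ and $17$. Also, at $m=18$ one has $\Lm=(1+\sqrt{67})/2\approx 4.59$, not $4.77$, so $\Lm^2-\tfrac52\Lm\approx 9.6$.

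The real gap is that items (ii), (iii) and the fallback are expectations, not arguments, and they misjudge what must be proved. The operative mechanism is not that edges between $W$ and $\Ap$ are forbidden. They are not: for $H_5$ with $e(W)=1$ one can have $e(\Ap,W)=1$ or $2$. The mechanism is that when $e(W)=0$, Lemma~\ref{lem:W-degree} forces each $w\in W$ to have two neighbors in $N$. Every such pair (two in $\Az$, one in each of $\Az$ and $\Ap$, or two in $\Ap$) closes a triangle or $4$-cycle through $w$ that meets a triangle or $4$-cycle of $G[\{u^*\}\cup\Ap]$ in exactly one vertex. Checking this pair by pair for $H_3,H_4,H_5$ gives $W=\varnothing$.

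For $H_3,H_4$ this yields $\rho^2-\tfrac32\rho<12$, a contradiction. For $H_5$ it yields only $\rho^2-\tfrac52\rho<14$, which the bound $\rho>\Lm$ contradicts only for even $m\ge 24$. Since the graph is then forced, what remains is a direct spectral comparison of $K_1\vee((K_4-e)\cup(m-9)K_1)$ with $\rhoext(m)$ at $m=18,20,22$. Refinement (i) provably cannot do this, and (iii) is not specific enough to.

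For $H_5$ with $e(W)=1$, the crude bound fails at $m=18,20$. There you must first prove three things: no vertex of $W$ has two neighbors in $\Ap$, none has two in $\Az$, and none meets both sets. It follows that each endpoint of the $W$-edge has exactly one further neighbor. This leaves exactly five attachment patterns:
\begin{itemize}
\item both endpoints on the same $\Az$-vertex;
\item the endpoints on distinct $\Az$-vertices;
\item one endpoint on $\Az$, the other on a degree-$3$ core vertex;
\item one endpoint on $\Az$, the other on a degree-$2$ core vertex;
\item the endpoints on the nonadjacent pair of $K_4-e$.
\end{itemize}
Only after this classification can the quotient comparisons be run. Your fallback ``if $m=18$ still fails'' undercounts the endpoint values, and neither the classification nor the polynomial checks are carried out.
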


\begin{proof}
We use \eqref{eq:zz3} and \eqref{eq:zz7}. Write $s=e(\Ap)$ and $a=|\Ap|$. For $H_1,\dots,H_5$ one has
\[
(s,a)\in\{(3,4),(3,3),(4,4),(4,4),(5,4)\}.
\]
Also \eqref{eq:zz3} gives
\[
e(W)\le
\begin{cases}
0,&G[\Ap]\cong H_1,\\
1,&G[\Ap]\cong H_i,\ i\in\{2,3,4\},\\
2,&G[\Ap]\cong H_5.
\end{cases}
\]

If $G[\Ap]\cong H_1$, then $s-a=-1$ and $e(W)=0$, so \eqref{eq:zz7} gives
\[
\rho(G)^2-\frac12\rho(G)<2s+a+e(\Ap,W)\le 13,
\]
contradicting \eqref{eq:lower12} for every even $m\ge 18$.

If $G[\Ap]\cong H_2$, then $s-a=0$ and $e(W)\le 1$. If $e(W)=0$, then \eqref{eq:zz7} gives
\[
\rho(G)^2-\frac32\rho(G)<2s+a+e(\Ap,W)\le 12,
\]
contradicting \eqref{eq:lower32}. If $e(W)=1$, then
\[
\rho(G)^2-\frac12\rho(G)<2s+a+e(\Ap,W)\le 12,
\]
contradicting \eqref{eq:lower12}.

Next suppose $G[\Ap]\cong H_3$ or $H_4$. We first record the local restrictions needed in this branch. Every $w\in W$ satisfies
\[
d_{\Ap}(w)\le 1,\qquad d_{\Az}(w)\le 1,
\]
and no vertex of $W$ can meet both $\Ap$ and $\Az$. We prove this directly. If $w$ had two neighbors in $\Az$, then those two neighbors together with $w$ and $u^*$ would form a $4$-cycle; since both $H_3$ and $H_4$ contain an edge in $\Ap$, a triangle through $u^*$ and that edge would share exactly $u^*$ with this cycle. If $w$ had neighbors $a\in\Ap$ and $z\in\Az$, then $a-w-z-u^*-a$ would be a $4$-cycle. In both $H_3$ and $H_4$, there is an edge of $G[\Ap]$ not incident with $a$; together with $u^*$ it gives a triangle sharing exactly $u^*$ with this cycle.

It remains to rule out two neighbors in $\Ap$. For $H_3$, write the triangle as $abc$ and let $d$ be the pendant vertex adjacent to $a$. If the two neighbors of $w$ are adjacent in $H_3$, then they form a triangle with $w$; one obtains a $4$-cycle through exactly one of those two vertices by using $u^*$ and an edge of $H_3$ outside the corresponding triangle. For instance, if the pair is $a,b$, then $awb$ is a triangle and $a-d-u^*-c-a$ is such a $4$-cycle. The other adjacent pairs are identical by symmetry inside the triangle or by using the edge $bc$. If the two neighbors are nonadjacent, for example $b,d$, then $b-w-d-a-b$ is a $4$-cycle and $b-c-u^*-b$ is a triangle sharing exactly $b$ with it; the pair $c,d$ is symmetric. Hence $d_{\Ap}(w)\le1$ in the $H_3$ case.

For $H_4=C_4$, label the cycle $abcd a$. If $w$ is adjacent to an adjacent pair, say $a,b$, then $awb$ is a triangle and $a-d-c-u^*-a$ is a $4$-cycle sharing exactly $a$ with it. If $w$ is adjacent to an opposite pair, say $a,c$, then $a-w-c-b-a$ is a $4$-cycle and $a-d-u^*-a$ is a triangle sharing exactly $a$ with it. Thus $d_{\Ap}(w)\le1$ also in the $H_4$ case.

If $e(W)=0$, this local observation and Lemma~\ref{lem:W-degree} force $W=\varnothing$, and so $e(\Ap,W)=0$. Then \eqref{eq:zz7} gives
\[
\rho(G)^2-\frac32\rho(G)<2s+a=12,
\]
contradicting \eqref{eq:lower32}. If $e(W)=1$, then \eqref{eq:zz7} gives
\[
\rho(G)^2-\frac12\rho(G)<2s+a+e(\Ap,W)\le 15,
\]
contradicting \eqref{eq:lower12}. Thus the $H_3$ and $H_4$ branches are impossible.

It remains to consider $G[\Ap]\cong H_5=K_4-e$. Label the vertices of $G[\Ap]$ as $p,q,r,s$, where $rs$ is the unique missing edge.

If $e(W)=2$, then \eqref{eq:zz7} gives
\[
\rho(G)^2-\frac12\rho(G)<2s+a+e(\Ap,W)\le 17,
\]
contradicting \eqref{eq:lower12}. Hence $e(W)\le 1$.

We claim that if $e(W)=0$, then $W=\varnothing$. Suppose $w\in W$. Since $G[W]$ has no edges and $d(w)\ge 2$, the vertex $w$ has at least two neighbors in $N=\Ap\cup\Az$. It cannot have two neighbors in $\Az$, since those two neighbors together with $w$ and $u^*$ form a $4$-cycle sharing exactly $u^*$ with the triangle $u^*pqu^*$. It cannot have one neighbor $z\in\Az$ and one neighbor $a\in\Ap$, since $a-w-z-u^*-a$ is a $4$-cycle sharing exactly $a$ with a triangle of $G[\Ap]$ containing $a$. Finally, if $w$ has two neighbors in $\Ap$, then the pair is, up to symmetry, an adjacent high-high pair, an adjacent high-low pair, or the nonadjacent low-low pair $\{r,s\}$; the three constructions
\[
pwq\quad\text{with}\quad p-r-u^*-s-p,
\]
\[
pwr\quad\text{with}\quad p-u^*-s-q-p,
\]
and
\[
r-w-s-p-r\quad\text{with}\quad rqu^*r
\]
give a fish in the respective cases. Thus no such $w$ exists.

Consequently, in the $e(W)=0$ subcase, $G$ is the join of $u^*$ with $K_4-e$ and $m-9$ ordinary leaves. The finite quotient comparison for $m=18,20,22$ is recorded in Appendix~\ref{app:nonK4-low}; for $m\ge 24$, \eqref{eq:zz7} gives
\[
\rho(G)^2-\frac52\rho(G)<2s+a=14,
\]
which contradicts
\[
\rho(G)^2-\frac52\rho(G)>\Lm^2-\frac52\Lm
=m-\frac34\sqrt{4m-5}-\frac94>14.
\]

Finally assume $e(W)=1$. Then $G[W]$ is a single edge $xy$. The same pair-check above shows that no vertex of $W$ has two neighbors in $\Ap$. Hence each endpoint of $xy$ has exactly one additional neighbor, either in $\Ap$ or in $\Az$, and it cannot meet both sets. Up to isomorphism, the only possibilities are:
both endpoints meet the same vertex of $\Az$; they meet distinct vertices of $\Az$; one endpoint meets $\Az$ and the other meets one of the high-degree vertices $p,q$; one endpoint meets $\Az$ and the other meets one of the low-degree vertices $r,s$; or the two endpoints meet the nonadjacent pair $r,s$. The last possibility is the only double-$\Ap$ case that avoids the pair-check above. These five finite families are compared with $\rhoext(m)$ for $m=18,20$ in Appendix~\ref{app:nonK4-low}. For $m\ge 22$, \eqref{eq:zz7} gives
\[
\rho(G)^2-\frac32\rho(G)<2s+a+e(\Ap,W)\le 17,
\]
contradicting \eqref{eq:lower32}. This completes the proof.
\end{proof}

\begin{lemma}\label{lem:nonK4-large-interface}
Assume $m\ge 18$ is even, $G[\Ap]\cong H_i$ for some $i\in\{1,2,3,4,5\}$, and $e(\Ap,W)\ge 4$. Then $\rho(G)<\rhoext(m)$.
\end{lemma}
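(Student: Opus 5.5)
Since \eqref{eq:zz7} only uses the upper bound $e(\Ap,W)$ through the combination $2e(\Ap)+|\Ap|+e(\Ap,W)$, a large interface cannot be handled by that inequality alone. The plan is therefore to bound $e(\Ap,W)$ structurally, branch by branch. We then feed the bound back into \eqref{eq:zz7}, or into the identity \eqref{eq:mcount} together with \eqref{eq:zz2}, and compare with \eqref{eq:lower12}--\eqref{eq:lower32}.

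The local fish-freeness checks made in the proof of Lemma~\ref{lem:nonK4-case1} give two facts for $H_3$, $H_4$, $H_5$. First, every $w\in W$ satisfies $d_{\Ap}(w)\le1$, apart from the single nonadjacent pair $\{r,s\}$ in $H_5$, which is itself impossible once $e(W)=0$ by the construction there. Second, no $w$ meets both $\Ap$ and $\Az$. Consequently, by Lemma~\ref{lem:W-degree}, each $w\in U:=N_W(\Ap)$ needs at least one further neighbour inside $W$.

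I would then show that $U$ is small compared with $e(W)$. Each $w\in U$ has a neighbour $w'\in W$. If two vertices of $U$ attached to the same $a\in\Ap$ were adjacent, or shared a common $W$-neighbour, we would get a triangle or $4$-cycle through $a$. Together with a triangle or $4$-cycle through $u^*$ and an edge of $G[\Ap]$ avoiding $a$, this meets in exactly $a$ and gives a fish. Hence the map $w\mapsto$ (an edge of $G[W]$ at $w$) is essentially injective, and so $e(\Ap,W)\le 2e(W)$. Combined with \eqref{eq:zz3}, which gives $e(W)\le 0,1,1,1,2$ in the five branches, this yields $e(\Ap,W)\le 4$. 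The cases with $e(\Ap,W)\ge4$ then survive only for $H_5$ with $e(W)=2$, or with $e(W)\le1$ and a very constrained attachment.

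For $H_1$ and $H_2$ the same argument applies with less structure. Since $e(W)=0$ for $H_1$, every $w\in W$ has two neighbours in $N$. Each such configuration creates a $4$-cycle through $u^*$, or through a vertex of $\Ap$, that is disjoint from some triangle or $4$-cycle except at one vertex. So $W=\varnothing$ and the case $e(\Ap,W)\ge4$ is void. For $H_2$ the argument is analogous, again with $e(W)\le 1$.

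The residual configurations are those with $e(\Ap,W)=4$ and small $m$. In these, \eqref{eq:zz7} gives, for example, $\rho^2-\tfrac12\rho<2s+a+4$, and this already contradicts \eqref{eq:lower12} for $m\ge 18$ in the $H_2$, $H_3$, $H_4$ branches. For $H_5$ with $e(W)\le 1$ we get $\rho^2-\tfrac12\rho<18$, which again contradicts \eqref{eq:lower12} since $m\ge18$ makes the right side exceed $18$.

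The main obstacle is the injectivity step $e(\Ap,W)\le 2e(W)$, especially in the $H_5$, $e(W)=2$ branch. There the bound gives $e(\Ap,W)$ up to $4$ and \eqref{eq:zz7} leaves only $\rho^2-\tfrac12\rho<\ldots$ without enough slack, so finitely many explicit graphs (joins of $u^*$ with $K_4-e$, leaves, and a $W$ of two edges) must be compared with $\rhoext(m)$. For these I would first try Lemma~\ref{lem:root-template} with $x_0=\Lm$. Where it fails at the smallest $m$, I would fall back on the Appendix-style exact quotient checks for the remaining values $m=18,20$.
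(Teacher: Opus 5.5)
\textbf{Main gap: the $H_2=C_3$ branch.} Your bound $e(\Ap,W)=|U|\le 2e(W)$ rests on $d_{\Ap}(w)\le1$, and this fails for a triangle core. A vertex $w^*\in W$ may be adjacent to all of $a,b,c$: the set $\{u^*,a,b,c,w^*\}$ has only five vertices, while a fish needs six. In fact the local fish-free structure allows $e(\Ap,W)=4$. Take $w^*$ joined to the whole triangle, plus a path $a-w_1-w_2-z$ with $z\in\Az$, so that $e(W)=1$. Hence ``the argument is analogous'' gives no bound on $e(\Ap,W)$ here, and without such a bound \eqref{eq:zz7} is useless. You need the following extra argument, which is the paper's Case~1 for $C_3$:
\begin{itemize}
\item Two vertices of $W$ with at least two $\Ap$-neighbours share one, and together with $u^*$ and the triangle they give a fish. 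So at most one such $w^*$ exists.
\item A vertex with exactly one $\Ap$-neighbour cannot meet $\Az$ and cannot be adjacent to $w^*$. It therefore needs its own $W$-edge, whose other endpoint cannot meet $\Ap$.
\item With $e(W)\le1$ this gives $e(\Ap,W)\le 3+1$. Equality forces $e(W)=1$, and then \eqref{eq:zz7} gives $\rho^2-\frac12\rho<13$, contradicting \eqref{eq:lower12}.
\end{itemize}

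\textbf{$H_1=P_4$.} Your intermediate claim $W=\varnothing$ is false: a single vertex adjacent exactly to $v_2,v_3$ creates no fish. What is true is that $\{v_2,v_3\}$ is the only admissible neighbour pair. Two such vertices give the $4$-cycle $v_2wv_3w'v_2$, which meets the triangle $u^*v_1v_2u^*$ only in $v_2$. So $|W|\le1$ and $e(\Ap,W)\le2$, and your conclusion survives.

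\textbf{$H_5=K_4-e$.} You misread the pair check for $\{r,s\}$. It excludes this pair for every value of $e(W)$, since $r-w-s-p-r$ and $rqu^*r$ share only $r$. So $d_{\Ap}(w)\le1$ holds throughout $H_3$--$H_5$. Then $|U|\le 2e(W)$ is immediate, because each vertex of $U$ is non-isolated in $G[W]$; no injectivity argument is needed.

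You also have the $H_5$ subcases reversed:
\begin{itemize}
\item For $e(W)\le1$, one gets $e(\Ap,W)\le2$, so this case is void.
\item $e(W)=2$ with $e(\Ap,W)=4$ is exactly the case where the coefficient in \eqref{eq:zz7} is $5-4+\frac32-2=\frac12$. This gives $\rho^2-\frac12\rho<10+4+4=18$, which contradicts \eqref{eq:lower12} outright because $\frac14\sqrt{67}-\frac54>0$.
\end{itemize}
So your ``main obstacle'' is not an obstacle. The proposed quotient computations for $m=18,20$ are unnecessary, and as written that part is an unexecuted plan rather than a proof.

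With these repairs your argument becomes essentially the paper's. Its Case~1 covers some $w$ with $d_{\Ap}(w)\ge2$, which survives only for $P_4$ and $C_3$. Its Case~2 covers all $d_{\Ap}(w)\le1$, which forces $H_5$ with $e(W)=2$ and is killed by the bound $18$.
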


\begin{proof}
We give the large-interface argument explicitly, following the structure of Zheng--Zhang's Claim~3.4 but replacing the large-threshold estimates by endpoint estimates valid from $m=18$ onward.

We first record the local restrictions used in this proof. Let $w\in W$. Then $d_{\Az}(w)\le1$; otherwise two neighbors of $w$ in $\Az$, together with $w$ and $u^*$, form a $4$-cycle sharing exactly $u^*$ with a triangle $u^*ab u^*$ from an edge $ab\in E(G[\Ap])$. Also, if $w$ has a neighbor in $\Az$, then $w$ has no neighbor in $\Ap$: if $z\in\Az$ and $a\in\Ap$ are both adjacent to $w$, then $a-w-z-u^*-a$ is a $4$-cycle, and in each of $H_1,\dots,H_5$ there is an edge of $G[\Ap]$ not incident with $a$; this edge together with $u^*$ gives a triangle sharing exactly $u^*$ with the cycle. Finally, if $w_1w_2\in E(W)$, then no vertex of $\Ap$ is adjacent to both $w_1$ and $w_2$; and if $w_1w_2\in E(W)$ and $v_1v_2\in E(\Ap)$, then we cannot have both edges $w_1v_1$ and $w_2v_2$. In the first case $v w_1w_2v$ would be a triangle, and in the second case $v_1w_1w_2v_2v_1$ would be a $4$-cycle; in each of the finitely many cores $H_1,\dots,H_5$, the remaining edges in $G[\Ap]\cup\{u^*\}$ supply the complementary cycle or triangle sharing exactly one vertex. This is the same local obstruction used in Zheng--Zhang's items (iv) and (v), but here it is used only for the five displayed cores.

We split into two cases.

\textit{Case 1: some $w^*\in W$ has $d_{\Ap}(w^*)\ge2$.} If $G[\Ap]\cong H_1=P_4$, write the path as $v_1v_2v_3v_4$. A direct pair check shows that the only possible two-neighbor set for $w^*$ is $\{v_2,v_3\}$; any other two-neighbor set gives either a triangle through $w^*$ and an edge of the path, or a $4$-cycle through $w^*$ and a subpath of $P_4$, together with the complementary obstruction through $u^*$. Since \eqref{eq:zz3} gives $e(W)=0$ in the $H_1$ branch, every vertex of $W$ has degree at least two into $\Ap$ and hence also has neighbor set $\{v_2,v_3\}$. The assumption $e(\Ap,W)\ge4$ therefore gives two such vertices $w^*,w'$, and the vertices $\{u^*,v_1,v_2,w^*,w',v_3\}$ contain a fish.

If $G[\Ap]\cong H_i$ for $i\in\{3,4,5\}$, then $\{u^*,w^*\}\cup\Ap$ already contains a fish. For $H_3$ and $H_4$ this was checked explicitly in the proof of Lemma~\ref{lem:nonK4-case1}; for $H_5=K_4-e$, the same high-high, high-low, and low-low pair check used there shows that any two neighbors of $w^*$ in $\Ap$ create a forbidden triangle--$4$-cycle pair.

Thus the only remaining core in Case 1 is $H_2=C_3$. Let $\Ap=\{a,b,c\}$. If two distinct vertices of $W$ have at least two neighbors in $\Ap$, then they share a neighbor in $\Ap$, and together with $u^*$ and the triangle $abc$ one obtains a fish. Hence only one vertex, call it $w^*$, can have at least two neighbors in $\Ap$. Since $e(\Ap,W)\ge4$, there is another vertex $w_1\in W$ with exactly one neighbor in $\Ap$. The local restrictions above imply that $w_1$ has no neighbor in $\Az$ and is not adjacent to $w^*$; otherwise the edge $w^*w_1$ together with a suitable edge of the triangle $abc$ violates the last local restriction. Since $e(W)\le1$ in the $H_2$ branch and $d(w_1)\ge2$, the degree condition forces a third vertex $w_2\in W$ with $w_1w_2\in E(W)$. The vertex $w_2$ has no neighbor in $\Ap$ and, by Lemma~\ref{lem:W-degree}, has exactly one neighbor in $\Az$. Consequently $e(W)=1$, $d_{\Ap}(w^*)=3$, and $e(\Ap,W)=4$. This is precisely the exceptional $H_2$ configuration. Applying \eqref{eq:zz7} gives
\[
\rho(G)^2-\frac12\rho(G)<2e(\Ap)+|\Ap|+e(\Ap,W)=2\cdot3+3+4=13,
\]
contradicting \eqref{eq:lower12}, whose right-hand side is greater than $18$ for every even $m\ge18$.

\textit{Case 2: every $w\in W$ has $d_{\Ap}(w)\le1$.} Since $e(\Ap,W)\ge4$, at least four vertices of $W$ meet $\Ap$. By the local restrictions above, each such vertex has no neighbor in $\Az$, and by Lemma~\ref{lem:W-degree} it must therefore have a neighbor in $W$. Hence $e(W)\ge2$. The bound on $e(W)$ from \eqref{eq:zz3} leaves only the core $H_5=K_4-e$ among $H_1,\dots,H_5$, and it forces $e(W)=2$.

Thus $G[W]\cong2K_2$, every vertex of $W$ has exactly one neighbor in $\Ap$, and no vertex of $W$ meets $\Az$. Label $G[\Ap]\cong K_4-e$ as before, with nonadjacent low-degree pair $\{r,s\}$. The endpoints of each edge of $G[W]$ cannot meet the same vertex of $\Ap$, and cannot meet an adjacent pair of vertices of $K_4-e$, by the local restrictions above. Hence the endpoints of each edge of $G[W]$ must meet $r$ and $s$ separately. This is the exceptional $H_5$ configuration. Applying \eqref{eq:zz7} gives
\[
\rho(G)^2-\frac12\rho(G)<2e(\Ap)+|\Ap|+e(\Ap,W)=2\cdot5+4+4=18,
\]
again contradicting \eqref{eq:lower12} for every even $m\ge18$.

For clarity, Table~\ref{tab:exceptional-nonk4} summarizes the two exceptional configurations and the estimates used to dispose of them.
\begin{table}[htbp]
\centering
\small
\caption{Exceptional large-interface non-$K_4$ configurations and the Perron-neighborhood estimates excluding them.}
\label{tab:exceptional-nonk4}
\begin{tabular}{@{}c c c c@{}}
\toprule
core & $e(W)$ & $e(\Ap,W)$ & consequence of \eqref{eq:zz7}\\
\midrule
$H_2=C_3$ & $1$ & $4$ & $\rho^2-\rho/2<13$\\
$H_5=K_4-e$ & $2$ & $4$ & $\rho^2-\rho/2<18$\\
\bottomrule
\end{tabular}
\end{table}
Both inequalities contradict \eqref{eq:lower12} when $m\ge18$. Hence the large-interface branch is impossible.
\end{proof}

\begin{lemma}\label{lem:K4-large-interface}
Assume $m\ge 18$ is even, $G[\Ap]\cong K_4$, and $e(\Ap,W)\ge 4$. Then $\rho(G)<\rhoext(m)$.
\end{lemma}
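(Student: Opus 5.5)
The approach is to show that the hypothesis $e(\Ap,W)\ge 4$ cannot occur at all when $G[\Ap]\cong K_4$. This is a counting argument that combines local fish obstructions with the bound on $e(W)$ from \eqref{eq:zz3}. Label $\Ap=\{a,b,c,d\}$. Since $e(\Ap)-|\Ap|=6-4=2$ and $\sum_{v\in\Az}x_v/x_{u^*}\ge 0$, inequality \eqref{eq:zz3} gives $e(W)<7/2$, so $e(W)\le 3$. It then suffices to prove $e(\Ap,W)\le e(W)$, because that forces $e(\Ap,W)\le 3$, contradicting the hypothesis. In this branch $e(\Ap,W)\ge 4$ is therefore vacuous, which in particular yields $\rho(G)<\rhoext(m)$.

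\emph{Step 1: local restrictions on a vertex $w\in W$.} I would verify each of the following by exhibiting a triangle and a $4$-cycle that share exactly one vertex.
\begin{itemize}
\item $d_{\Ap}(w)\le 1$. If $w\sim a,b$, then $awb$ is a triangle and $a\,c\,u^*\,d\,a$ is a $4$-cycle meeting it only in $a$.
\item $w$ cannot have neighbors in both $\Ap$ and $\Az$. If $w\sim a$ and $w\sim z\in\Az$, then $a\,w\,z\,u^*\,a$ is a $4$-cycle and $u^*bc$ is a triangle meeting it only in $u^*$.
\end{itemize}
Consequently a vertex $w$ with $d_{\Ap}(w)=1$ has exactly one neighbor in $N$. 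Since Lemma~\ref{lem:W-degree} gives $d(w)\ge 2$, such a $w$ has at least one neighbor in $W$.

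\emph{Step 2: no edge of $G[W]$ has both endpoints meeting $\Ap$.} Let $w_1w_2\in E(W)$ with $w_1\sim v_1$ and $w_2\sim v_2$, where $v_1,v_2\in\Ap$.
\begin{itemize}
\item If $v_1=v_2=a$, then $aw_1w_2$ is a triangle and $a\,b\,u^*\,c\,a$ is a $4$-cycle meeting it only in $a$.
\item If $v_1=a\ne v_2=b$, then $a\,w_1\,w_2\,b\,a$ is a $4$-cycle and $acd$ is a triangle meeting it only in $a$.
\end{itemize}
Both cases produce a fish, so at most one endpoint of any edge of $G[W]$ meets $\Ap$.

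\emph{Step 3: counting.} Let $U=\{w\in W:d_{\Ap}(w)=1\}$. By Step 1, $e(\Ap,W)=|U|$. Choose for each $w\in U$ one incident edge of $G[W]$, which exists by Step 1. By Step 2 each edge of $G[W]$ has at most one endpoint in $U$, so this choice is injective. Hence $|U|\le e(W)\le 3$, contradicting $e(\Ap,W)\ge 4$. Step 2 also shows that $U$ is independent in $G[W]$, which is the interface principle announced in the introduction.

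The main obstacle is ensuring that every fish exhibited is genuine. The triangle and the $4$-cycle must share exactly one vertex, and the $4$-cycle's vertices must be distinct, which uses $w\notin N[u^*]$ and the distinctness of the chosen vertices of $\Ap$. Once those checks are done, the counting in Step 3 is immediate and uses no spectral estimate beyond \eqref{eq:zz3}.
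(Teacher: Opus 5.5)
Your proof is correct and is essentially the paper's argument: $e(W)\le 3$ from \eqref{eq:zz3}, local fish obstructions giving $e(\Ap,W)=|U|$ with $U$ independent in $G[W]$, and then $|U|\le 3$. The only difference is cosmetic: you inject $U$ into $E(G[W])$ to get $|U|\le e(W)$, whereas the paper writes $|U|\le\alpha(G[W])\le 3$ for a graph with at most three edges and no isolated vertices, which rests on the same injection; because your version needs a $W$-neighbor only for vertices of $U$, it can also skip the $d_{\Az}(w)\le 1$ check.
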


\begin{proof}
Since $G[\Ap]\cong K_4$, one has $e(\Ap)=6$ and $|\Ap|=4$. Then \eqref{eq:zz3} gives
\[
e(W)<6-4+\frac32-\sum_{v\in \Az}\frac{x_v}{x_{u^*}}<\frac72,
\]
so $e(W)\le 3$.

We record the local restrictions needed in this branch. If some $w\in W$ had two neighbors $a,b\in\Ap$, then $awb$ would be a triangle, and a $4$-cycle through $a,u^*$ and the two vertices of $\Ap\setminus\{a,b\}$ would create a copy of $H(4,3)$. Thus $d_{\Ap}(w)\le 1$. Similarly, $d_{\Az}(w)\le 1$, and $w$ cannot meet both $\Ap$ and $\Az$, by the same $4$-cycle/triangle obstruction. Since $d(w)\ge 2$ by Lemma~\ref{lem:W-degree}, every vertex of $W$ has a neighbor in $W$.

Let $U=\{w\in W:d_{\Ap}(w)=1\}$. The preceding paragraph gives $e(\Ap,W)=|U|$. Also $U$ is independent. Indeed, if $w_1w_2\in E(W)$ and $w_i$ is adjacent to $a_i\in\Ap$, then if $a_1\neq a_2$ the cycle
\[
a_1-w_1-w_2-a_2-a_1
\]
and a triangle $u^*a_1bu^*$ with $b\in\Ap\setminus\{a_1,a_2\}$ form a fish. If $a_1=a_2$, then the triangle $a_1w_1w_2a_1$ and a $4$-cycle through $a_1,u^*$ and two other vertices of $\Ap$ form a fish. Thus
\[
e(\Ap,W)=|U|\le \alpha(G[W]).
\]
But $G[W]$ has no isolated vertices and at most three edges, so $\alpha(G[W])\le 3$. Therefore $e(\Ap,W)\le 3$, contradicting the assumption $e(\Ap,W)\ge 4$.
\end{proof}

\begin{lemma}\label{lem:H7-terminal}
Assume $m\ge 18$ is even and $G[\Ap]\cong H_7$. If $\rho(G)\ge\rhoext(m)$, then
\[
G\cong \Smk.
\]
\end{lemma}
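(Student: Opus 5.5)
The plan is to show that the star branch forces the exact shape of $\Smk$, and then to compare within the small family that remains. Write $G[\Ap]\cong K_{1,t}$ with center $c$ and leaves $\ell_1,\dots,\ell_t$. Then $e(\Ap)-|\Ap|=-1$, so \eqref{eq:zz3} gives
\[
0\le e(W)<\tfrac12-\sum_{v\in\Az}\frac{x_v}{x_{u^*}} .
\]
Hence $e(W)=0$ and $\sum_{v\in\Az}x_v<\tfrac12x_{u^*}$. Every triangle through $u^*$ has the form $u^*c\ell_iu^*$. In the branch $t=1$ we have $\Ap=\{c,\ell_1\}$, $2e(\Ap)+|\Ap|=4$, and $e(W)=0$. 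In that branch \eqref{eq:zz7} together with the local bound on $e(\Ap,W)$ below should give $\rho^2-\tfrac12\rho$ far below the right-hand side of \eqref{eq:lower12}. So I would dispose of $t=1$ first and assume $t\ge2$ from then on.

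\emph{Step 1: local structure of $W$.} Since $e(W)=0$, Lemma~\ref{lem:W-degree} says that every $w\in W$ has at least two neighbours in $N$. I would run the usual fish checks against the triangles $u^*c\ell_iu^*$.
\begin{itemize}
\item Two neighbours $z_1,z_2\in\Az$ give the $4$-cycle $u^*z_1wz_2u^*$, which meets a triangle $u^*c\ell_iu^*$ only in $u^*$.
\item A neighbour $\ell_i$ together with a neighbour $z\in\Az$ gives the cycle $\ell_iwzu^*\ell_i$, which meets $u^*c\ell_ju^*$ ($j\ne i$) only in $u^*$.
\item Two leaves $\ell_i,\ell_j$ give the cycle $\ell_iw\ell_jc\ell_i$, which meets $u^*c\ell_ku^*$ only in $c$, provided $t\ge3$.
\end{itemize}
Thus, apart from small-$t$ exceptions, every $w\in W$ is adjacent to $c$, and all its other neighbours lie in a single class, either one leaf or some vertices of $\Az$. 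I would then exclude $W\ne\varnothing$ by a rerouting argument. Since $x_{u^*}\ge x_c$, Lemma~\ref{lem:edge-moving} applied with $v=c$, $u=u^*$, $S=\{w\}$ should increase $\rho$. Alternatively, one can move all edges from $c$ into $W$ onto $u^*$ at once. I would then check directly that the resulting graph is still $H(4,3)$-free, which is a finite check because $W$ vertices now become new members of $N$ of degree $\le2$ in the new neighbourhood. This contradicts extremality. For the small values $t\le2$, where the leaf-pair obstruction fails, I would instead bound $e(\Ap,W)$ by a constant and use \eqref{eq:zz7} with $e(W)=0$. Since $|\Ap|\le3$, that gives $\rho^2-\tfrac12\rho<\text{const}$, contradicting \eqref{eq:lower12} for $m\ge18$.

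\emph{Step 2: the family with $W=\varnothing$.} Now $G=K_1\vee(K_{1,t}\cup sK_1)$ with $s=|\Az|$ and $m=2t+1+s$, so $s$ is odd. If $s=1$, then $G\cong\Smk$ and we are done. For $s\ge3$, I would first try a direct transformation: join one $\Az$ vertex to $c$ (turning it into a leaf) and delete another pendant vertex of $u^*$. This keeps $m$ fixed but changes the graph nontrivially, so it is better to compare characteristic polynomials directly. The equitable quotient on $\{u^*\},\{c\},\text{leaves},\Az$ gives a quartic $f_{t,s}$. Writing $f_{t,s}=q\,p_m+R$ and applying Lemma~\ref{lem:root-template} with $x_0=\Lm$ should show $\rho<\rhoext(m)$ for all $s\ge3$. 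As a sanity check, the Perron condition $s/\rho<\tfrac12$ already bounds $s$.

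The main obstacle is Step 1: showing rigorously that the rerouted graph stays fish-free and has no isolated vertices, and handling the exceptional small-$t$ configurations (for example $w$ adjacent to $c$ and $\ell_1$ when $t=2$) uniformly down to $m=18$.
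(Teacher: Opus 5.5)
Your Step~2 is fine and is the paper's family comparison (there $f_{m,t}=p_m+r\bigl(x+\frac{2t-1}{2}\bigr)$ with $r=s-1$). Step~1, however, has a genuine gap: the rerouting moves the wrong edge. For $t\ge3$, your bullets force $N(w)\cap N=\{c,z\}$ with $z\in\Az$, once you add the exclusion you omit: the pair $\{c,\ell_i\}$ gives the triangle $cw\ell_ic$ and the $4$-cycle $c\ell_ju^*\ell_kc$, which share only $c$. Applying Lemma~\ref{lem:edge-moving} with $v=c$ replaces $wc$ by $wu^*$, so in $G'$ the vertex $w$ is adjacent to $u^*$ and $z$. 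Then $u^*wzu^*$ is a triangle meeting the $4$-cycle $u^*\ell_1c\ell_2u^*$ only in $u^*$. So $G'$ contains $H(4,3)$ and extremality is not contradicted; moving all $c$--$W$ edges at once fails the same way. The paper instead deletes $wz$ and adds $wu^*$, which Lemma~\ref{lem:edge-moving} allows since $x_{u^*}\ge x_z$, and $z$ keeps its edge to $u^*$. Now $w$ becomes one more leaf of the star. Using that $z$ is private to $w$ (otherwise $cw_1zw_2c$ meets $u^*c\ell_1u^*$ only in $c$), one checks that every triangle and every $4$-cycle of $G'$ contains both $u^*$ and $c$.

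Your plan for $t\le2$ also fails as stated, because $e(\Ap,W)$ is not bounded by a constant there. For $t=2$, arbitrarily many $w$ with private neighbourhoods $\{c,z_w\}$ are fish-free. For $t=1$, arbitrarily many $w$ adjacent to both $c$ and $\ell_1$ are fish-free, since every triangle and $4$-cycle then contains $c$ and $\ell_1$. The paper bounds $e(\Ap,W)$ through the edge count instead. For $t=2$, at most one vertex of $W$ has two or more neighbours in $\Ap$, which gives $e(\Ap,W)\le\frac{m-3}{2}$ and $\rho^2-\frac12\rho<\frac m2+\frac{11}{2}$. For $t=1$, $e(\Ap,W)\le e(N,W)=m-|N|-1$, and \eqref{eq:zz7} gives $\rho^2-\frac12\rho<m$ only when $|N|\ge3$, i.e.\ $\Az\ne\varnothing$. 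This beats \eqref{eq:lower12} by just $\frac14\sqrt{4m-5}-\frac54$, nowhere near ``far below''. If $\Az=\varnothing$ the bound is only $m+1$, which does not contradict \eqref{eq:lower12} (e.g.\ at $m=18$). The paper closes that case by parity: every $w$ is then adjacent to both $c$ and $\ell_1$, so $m=3+2|W|$ is odd. Your proposal has no substitute for this argument.
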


\begin{proof}
Write $G[\Ap]\cong K_{1,t}$, with center $c$ and leaves $B=\{b_1,\dots,b_t\}$. Since $e(\Ap)=t$ and $|\Ap|=t+1$, \eqref{eq:zz3} gives
\[
e(W)<t-(t+1)+\frac32-\sum_{v\in\Az}\frac{x_v}{x_{u^*}}\le \frac12.
\]
Thus $e(W)=0$, so $W$ is independent.

We first show that $\Az\neq\varnothing$ if $W\neq\varnothing$. Suppose $\Az=\varnothing$ and choose $w\in W$. Since $e(W)=0$ and $d(w)\ge2$, the vertex $w$ has at least two neighbors in $\Ap$.

If $t=1$, every vertex of $W$ is adjacent to both vertices of the single edge $G[\Ap]$, so
\[
m=|N|+e(\Ap)+e(N,W)=2+1+2|W|,
\]
which is odd, a contradiction.

If $t=2$, write $\Ap=\{c,b_1,b_2\}$, where $c$ is the center of the star. A vertex of $W$ has one of the following neighborhood types in $\Ap$:
\[
\{c,b_1\},\qquad \{c,b_2\},\qquad \{b_1,b_2\},\qquad \{c,b_1,b_2\}.
\]
Two distinct vertices of $W$ with such neighborhoods always create a fish. Indeed, if their neighborhoods contain a common two-set $\{\alpha,\beta\}\subseteq \Ap$, then $\alpha w_1\beta w_2\alpha$ is a $4$-cycle, and a triangle through $\alpha$ using $u^*$ and the third vertex of $\Ap$ shares exactly $\alpha$ with it. The only pairs not covered by a common two-set are, up to symmetry, $\{c,b_1\}$ with $\{c,b_2\}$ and $\{c,b_1\}$ with $\{b_1,b_2\}$. In the first case, $c w_1 b_1 c$ is a triangle and $c w_2 b_2 u^*c$ is a $4$-cycle sharing only $c$; in the second, $c w_1 b_1 c$ is a triangle and $b_1w_2b_2u^*b_1$ is a $4$-cycle sharing only $b_1$. Thus $|W|\le1$, and then $m\le3+2+3=8$, impossible.

If $t\ge3$, then two leaf-neighbors of $w$ create the $4$-cycle $u^*b_iwb_ju^*$ and a triangle $u^*cb_ku^*$ with $k\notin\{i,j\}$, while the pair $\{c,b_i\}$ creates the triangle $cwb_ic$ and the $4$-cycle $cb_ju^*b_kc$ for distinct $j,k\ne i$. Thus no such $w$ exists. Therefore, whenever $W\neq\varnothing$, one has $\Az\neq\varnothing$.

Now assume, for a contradiction, that $W\neq\varnothing$.

If $t=1$, then \eqref{eq:zz7} gives
\[
\rho(G)^2-\frac12\rho(G)<2e(\Ap)+|\Ap|+e(\Ap,W)=4+e(\Ap,W).
\]
Since $e(\Ap,W)\le e(N,W)=m-|N|-1$ and $\Az\neq\varnothing$ gives $|N|\ge3$, we obtain
\[
\rho(G)^2-\frac12\rho(G)<m.
\]
This contradicts \eqref{eq:lower12}, which is strictly greater than $m$ for every even $m\ge18$.

If $t=2$, then \eqref{eq:zz7} gives
\[
\rho(G)^2-\frac12\rho(G)<7+e(\Ap,W).
\]
There is at most one vertex of $W$ with at least two neighbors in $\Ap$: two such vertices produce a $4$-cycle through them and two vertices of $\Ap$, and a triangle through one of those vertices and $u^*$, sharing exactly one vertex. If one vertex has all three vertices of $\Ap$ as neighbors, then all other vertices of $W$ have at most one neighbor in $\Ap$, and, since $e(W)=0$, at least one neighbor in $\Az$. Hence
\[
e(\Ap,W)\le |W|+2,
\qquad
e(N,W)\ge 2|W|+1.
\]
If no vertex has all three vertices of $\Ap$ as neighbors, then
\[
e(\Ap,W)\le |W|+1,
\qquad
e(N,W)\ge 2|W|.
\]
In either case, using $e(N,W)=m-|N|-2$ and $|N|\ge4$, we get
\[
e(\Ap,W)\le \frac{m-3}{2}.
\]
Therefore
\[
\rho(G)^2-\frac12\rho(G)<7+\frac{m-3}{2}=\frac m2+\frac{11}{2},
\]
which is smaller than the lower bound in \eqref{eq:lower12} for every even $m\ge18$. This is impossible.

It remains to consider $t\ge3$. Let $w\in W$. The local checks in the first paragraph show that $w$ cannot have two neighbors in $\Az$, cannot have one neighbor in $\Az$ and one leaf of the star, cannot have two leaf-neighbors, and cannot have both $c$ and a leaf as neighbors. Since $d(w)\ge2$ and $W$ is independent, the only possible neighborhood of $w$ in $N$ is
\[
N(w)\cap N=\{c,z\}
\]
for some $z\in\Az$. If two distinct vertices $w_1,w_2\in W$ shared the same $z$, then $c-w_1-z-w_2-c$ would be a $4$-cycle sharing exactly $c$ with the triangle $u^*cb_1u^*$. Hence the vertex $z$ is unique to $w$.

Now form $G'=G-wz+wu^*$. This move preserves the number of edges and creates no isolated vertex, since $z$ remains adjacent to $u^*$. By Lemma~\ref{lem:edge-moving}, it strictly increases the spectral radius. It remains only to check that $G'$ is still $H(4,3)$-free. The only new edge is $wu^*$, so every new triangle or new $4$-cycle must contain $w$ and the new edge $wu^*$.

In $G'$, the vertex $w$ has exactly two neighbors, $u^*$ and $c$. Thus the only new triangle containing $w$ is $wu^*cw$. Every new $4$-cycle containing $w$ must be of the form
\[
w-u^*-b_i-c-w
\]
for some leaf $b_i\in B$; indeed, a $4$-cycle through $w$ must use both neighbors $u^*$ and $c$, and the only length-two paths from $u^*$ to $c$ avoiding $w$ are $u^*b_i c$.

The old triangles in $G'-w$ are exactly the triangles $u^*cb_i u^*$, all containing both $u^*$ and $c$. The old $4$-cycles in $G'-w$ also contain both $u^*$ and $c$: they are either cycles $u^*b_i c b_j u^*$ through two leaves of the star, or cycles $u^*z_jw_jc u^*$ coming from a vertex $w_j\in W\setminus\{w\}$ with neighborhood $\{c,z_j\}$. Therefore a new triangle and an old $4$-cycle share at least the two vertices $u^*,c$; a new $4$-cycle and an old triangle also share at least $u^*,c$; and the new triangle $wu^*cw$ and every new $4$-cycle share the three vertices $w,u^*,c$. Hence no triangle and $4$-cycle in $G'$ share exactly one vertex. So $G'$ remains $H(4,3)$-free, contradicting the extremality of $G$.

Thus $W=\varnothing$ in all cases. Consequently every vertex outside $\{u^*\}\cup\Ap$ lies in $\Az$ and is adjacent only to $u^*$. Hence
\[
G\cong F_{m,t}:=K_1\vee\bigl(K_{1,t}\cup (m-2t-1)K_1\bigr).
\]
The quotient comparison in Lemma~\ref{lem:H7-family-comparison} shows that the only member of this family with spectral radius at least $\rhoext(m)$ is $F_{m,(m-2)/2}\cong\Smk$. This proves the lemma.
\end{proof}

\begin{lemma}\label{lem:H7-family-comparison}
Let
\[
F_{m,t}=K_1\vee\bigl(K_{1,t}\cup (m-2t-1)K_1\bigr),
\qquad 1\le t\le \frac{m-2}{2}.
\]
Then $\rho(F_{m,t})<\rhoext(m)$ unless $t=(m-2)/2$, in which case $F_{m,t}\cong\Smk$ and $\rho(F_{m,t})=\rhoext(m)$.
\end{lemma}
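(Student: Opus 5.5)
The plan is to compute $\rho(F_{m,t})$ exactly through an equitable quotient, as in Lemma~\ref{lem:comparison-graph-polynomial}, and then compare the resulting quartic with $p_m$ directly. We take Lemma~\ref{lem:root-template} with $q_m\equiv 1$, so the whole comparison reduces to the sign of a linear remainder.

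\emph{Quotient and characteristic polynomial.} Write $s=m-2t-1\ge 1$. Use the equitable partition of $F_{m,t}$ into four cells: the dominant vertex, the star center $c$, the $t$ leaves, and the $s$ vertices that were isolated before the join. The quotient matrix is
\[
Q_t=\begin{pmatrix}0&1&t&s\\1&0&t&0\\1&1&0&0\\1&0&0&0\end{pmatrix}.
\]
$F_{m,t}$ is connected and the partition is equitable, so $\rho(F_{m,t})$ is the largest real root of $f_t(x)=\det(xI-Q_t)$. To compute $f_t$, solve the eigen-equations for an eigenvector $(a,b,\ell,z)$. Eliminating $z=a/x$, $\ell=(a+b)/x$ and $b=a(x+t)/(x^2-t)$ leaves
\[
(x^2-t-s)(x^2-t)=(x+t)^2 .
\]
Using $2t+s+1=m$, this expands to
\[
f_t(x)=x^4-mx^2-2tx+t(m-2t-1).
\]
As a sanity check, at $t=(m-2)/2$ we have $s=1$, and this recovers $p_m$ together with the identification $F_{m,(m-2)/2}\cong\Smk$ from Lemma~\ref{lem:comparison-graph-polynomial}.

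\emph{Comparison.} Since $m$ is even, $k:=\frac{m-2}{2}-t$ is a nonnegative integer, and $k\ge1$ unless $t=(m-2)/2$. Substituting $m-2-2t=2k$ and $m-2t-1=2k+1$ gives
\[
f_t(x)-p_m(x)=k\,(2x+2t-1).
\]
For $k\ge1$ and $x>0$ this is strictly positive. By Lemma~\ref{lem:root-template}, with $q_m=1$ and $x_0=\Lm$ (which is valid by Lemma~\ref{lem:Lm-lower}), $f_t$ has no root in $[\rhoext(m),\infty)$. Therefore $\rho(F_{m,t})<\rhoext(m)$ whenever $t<(m-2)/2$. The case $t=(m-2)/2$ is exactly Lemma~\ref{lem:comparison-graph-polynomial}.

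\emph{Main obstacle.} The only delicate step is the elimination producing $f_t$. Care is needed in the degenerate factors $x^2-t$ and $x$; this can be settled by expanding the $4\times4$ determinant directly and checking the result against $p_m$ at $k=0$. Once $f_t$ is known, the remainder has a clean sign and no endpoint analysis is needed.
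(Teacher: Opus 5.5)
Your proposal is correct and follows essentially the same route as the paper. It uses the same four-cell equitable quotient and the same quartic $f_t(x)=x^4-mx^2-2tx+t(m-2t-1)$, and it rests on the same observation that $f_t-p_m$ is a positive linear term: your $k(2x+2t-1)$ is the paper's $r\bigl(x+\tfrac{2t-1}{2}\bigr)$ with $r=2k$. The differences are cosmetic. You derive the quartic by elimination, and a direct cofactor expansion confirms the result. You also route the final step through Lemma~\ref{lem:root-template} with $q_m\equiv1$, whereas the paper argues the sign on $[\rhoext(m),\infty)$ directly.
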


\begin{proof}
With respect to the equitable partition
\[
\{u^*\},\quad \{c\},\quad B,\quad \Az,
\]
where $c$ is the center of the star and $B$ is its set of $t$ leaves, the quotient matrix is
\[
Q_{m,t}=
\begin{pmatrix}
0&1&t&m-2t-1\\
1&0&t&0\\
1&1&0&0\\
1&0&0&0
\end{pmatrix}.
\]
Its characteristic polynomial is
\[
f_{m,t}(x)=x^4-mx^2-2tx+t(m-2t-1).
\]
Since
\[
p_m(x)=x^4-mx^2-(m-2)x+\frac m2-1,
\]
we have
\[
f_{m,t}(x)-p_m(x)
=(m-2-2t)x+t(m-2t-1)-\left(\frac m2-1\right).
\]
Set $r=m-2-2t\ge0$. Then
\[
f_{m,t}(x)=p_m(x)+r\left(x+\frac{2t-1}{2}\right).
\]
If $r>0$, then for every $x\ge\rhoext(m)$ one has
\[
f_{m,t}(x)=p_m(x)+r\left(x+\frac{2t-1}{2}\right)>0,
\]
because $p_m(x)\ge0$ for $x\ge\rhoext(m)$ and the second term is positive. Hence $f_{m,t}$ has no real root in $[\rhoext(m),\infty)$, and the Perron root of $F_{m,t}$ is strictly smaller than $\rhoext(m)$. If $r=0$, then $t=(m-2)/2$ and $f_{m,t}=p_m$, so the Perron root is $\rhoext(m)$. In this case $m-2t-1=1$, and
\[
F_{m,(m-2)/2}=K_1\vee\left(K_{1,(m-2)/2}\cup K_1\right)\cong\Smk.
\]
\end{proof}

\begin{proposition}[Reduction after Zheng--Zhang]\label{prop:reduction}
Assume $m\ge 18$ is even. In the branch decomposition of~\cite[Claims~3.1--3.5]{ZhengZhang2025}, every branch other than
\[
G[\Ap]\cong K_4
\qquad\text{with}\qquad
e(\Ap,W)\le 3
\]
is either impossible under the assumption $\rho(G)\ge \rhoext(m)$ or is the equality graph $\Smk$. Consequently, to rule out any \emph{new} counterexample to Theorem~\ref{thm:main}, it suffices to study the $K_4$-branch above.
\end{proposition}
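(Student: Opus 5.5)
The proof assembles the branch lemmas already established; no new estimate is needed. We work with the connected extremal graph $G$ supplied by Lemma~\ref{lem:connected-reduction}. By Lemmas~\ref{lem:comparison-graph-polynomial} and~\ref{lem:comparison-admissible}, extremality gives $\rho(G)\ge\rhoext(m)$. Fix the Perron vertex $u^*$ and the sets $N,W,\Ap,\Az$ as in Section~\ref{sec:prelim}.

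The first step classifies $G[\Ap]$. Lemma~\ref{lem:Aplus-structure} shows that $\Ap\neq\varnothing$, that $G[\Ap]$ is connected, and that $G[\Ap]$ is $(P_2\cup P_3)$-free. By definition every vertex of $\Ap$ has a neighbor in $N$, and that neighbor lies in $\Ap$, so $G[\Ap]$ has minimum degree at least $1$. Lemma~\ref{lem:claim33} then places $G[\Ap]$ in exactly one of the classes $H_1,\dots,H_7$. This is precisely the branch decomposition of~\cite[Claims~3.1--3.5]{ZhengZhang2025}.

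The second step goes through the branches one at a time.
\begin{enumerate}[label=\rm(\alph*)]
\item If $G[\Ap]\cong H_i$ with $i\in\{1,\dots,5\}$ and $e(\Ap,W)\le3$, Lemma~\ref{lem:nonK4-case1} gives $\rho(G)<\rhoext(m)$. This contradicts $\rho(G)\ge\rhoext(m)$, so the branch is impossible.
\item If $G[\Ap]\cong H_i$ with $i\in\{1,\dots,5\}$ and $e(\Ap,W)\ge4$, Lemma~\ref{lem:nonK4-large-interface} gives the same contradiction.
\item If $G[\Ap]\cong H_6=K_4$ and $e(\Ap,W)\ge4$, Lemma~\ref{lem:K4-large-interface} excludes the branch.
\item If $G[\Ap]\cong H_7$, Lemma~\ref{lem:H7-terminal} forces $G\cong\Smk$, so this is the terminal equality branch.
\end{enumerate}
The only branch not yet handled is $G[\Ap]\cong K_4$ with $e(\Ap,W)\le3$, which is the first assertion of the proposition.

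For the ``consequently'' clause, suppose $G'$ is an admissible graph of even size $m\ge18$ with either $\rho(G')>\rhoext(m)$, or $\rho(G')=\rhoext(m)$ and $G'\not\cong\Smk$. Then the extremal graph $G$ also satisfies $\rho(G)\ge\rhoext(m)$. If $\rho(G)>\rhoext(m)$, every branch except the remaining $K_4$-branch is impossible, and in particular the $H_7$ branch cannot occur, since it would give $G\cong\Smk$ and hence $\rho(G)=\rhoext(m)$. So any new counterexample forces the extremal graph into the $K_4$-branch with $e(\Ap,W)\le3$.

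The main obstacle is the equality subcase: the counterexample $G'$ has $\rho(G')=\rhoext(m)$ but is not isomorphic to $\Smk$. To handle it, I would apply the whole decomposition directly to $G'$ rather than to an abstract maximizer. In that case $G'$ is itself an extremal graph, and Lemma~\ref{lem:connected-reduction} would be rerun for it. If $G'$ is disconnected, the leaf-attachment construction in that proof yields a graph of strictly larger spectral radius, which contradicts $\rho(G')=\rhoext(m)$ being maximal. So $G'$ is connected. Lemma~\ref{lem:H7-terminal} then identifies any $H_7$-branch graph with $\Smk$. Therefore any non-isomorphic equality graph also lands in the $K_4$-branch with $e(\Ap,W)\le3$, which is what the proposition asserts.
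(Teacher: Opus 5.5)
Your proof is correct and follows the paper's argument. Like the paper, you classify $G[\Ap]$ via Lemmas~\ref{lem:Aplus-structure} and~\ref{lem:claim33}, then dispatch the branches with Lemmas~\ref{lem:nonK4-case1}, \ref{lem:nonK4-large-interface}, \ref{lem:K4-large-interface}, and~\ref{lem:H7-terminal}. Your extra paragraph on the equality subcase is sound and makes explicit a point the paper leaves implicit: you rerun the decomposition on an arbitrary maximizer, which is connected by the leaf-attachment argument.
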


\begin{proof}
By Lemma~\ref{lem:Aplus-structure}, one has $\Ap\neq\varnothing$, the graph $G[\Ap]$ is connected, and $G[\Ap]$ is $(P_2\cup P_3)$-free. Therefore Lemma~\ref{lem:claim33} applies, and $G[\Ap]\cong H_i$ for some $i\in[7]$.

If $i\in\{1,2,3,4,5\}$ and $e(\Ap,W)\le 3$, then Lemma~\ref{lem:nonK4-case1} applies, while if $i\in\{1,2,3,4,5\}$ and $e(\Ap,W)\ge 4$, then Lemma~\ref{lem:nonK4-large-interface} applies. If $i=6$ and $e(\Ap,W)\ge 4$, then Lemma~\ref{lem:K4-large-interface} applies. If $i=7$, then Lemma~\ref{lem:H7-terminal} shows directly that only $\Smk$ can satisfy $\rho(G)\ge \rhoext(m)$.

The only branch not covered by these statements is therefore
\[
G[\Ap]\cong K_4
\qquad\text{with}\qquad
e(\Ap,W)\le 3,
\]
which is exactly the branch treated in the rest of the paper.
\end{proof}

Table~\ref{tab:reduction-map} summarizes the reduction proof map.
\begin{table}[htbp]
\centering
\small
\caption{Reduction map for the Perron-neighborhood branch analysis.}
\label{tab:reduction-map}
\begin{tabular}{@{}c c p{0.38\textwidth}@{}}
\toprule
Core $G[\Ap]$ & Interface condition & Outcome \\
\midrule
$H_1,\dots,H_5$ & $e(\Ap,W)\le3$ & excluded by Lemma~\ref{lem:nonK4-case1} \\
$H_1,\dots,H_5$ & $e(\Ap,W)\ge4$ & excluded by Lemma~\ref{lem:nonK4-large-interface} \\
$K_4$ & $e(\Ap,W)\ge4$ & excluded by Lemma~\ref{lem:K4-large-interface} \\
$H_7=K_{1,t}$ & all & terminal star family; only $\Smk$ survives \\
$K_4$ & $e(\Ap,W)\le3$ & threshold-bearing branch treated below \\
\bottomrule
\end{tabular}
\end{table}

Henceforth we work entirely in the final row of the table.

\section{The interface independence principle}\label{sec:local}

Throughout this section we assume
\[
G[\Ap]\cong K_4
\qquad\text{and}\qquad
e(\Ap,W)\le 3.
\]

We first record the local restrictions used in the $K_4$-branch.

\begin{lemma}\label{lem:local-basic}
For every $w\in W$,
\begin{enumerate}[label={\rm(\arabic*)}]
\item $d_{\Ap}(w)\le 1$,
\item $d_{\Az}(w)\le 1$,
\item one cannot have both $d_{\Ap}(w)\ge 1$ and $d_{\Az}(w)\ge 1$.
\end{enumerate}
In particular, since every $w\in W$ has degree at least $2$ by Lemma~\ref{lem:W-degree}, every vertex of $W$ has a neighbor in $W$.
\end{lemma}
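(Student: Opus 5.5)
Each of the three items will be proved by contradiction. In every case we exhibit a triangle and a $4$-cycle that share exactly one vertex, which is a copy of $H(4,3)$. The only structure used is $G[\Ap]\cong K_4$ together with the adjacency of $u^*$ to all of $N$. Label $\Ap=\{a,b,c,d\}$. Since $G[\Ap]\cup\{u^*\}$ induces $K_5$, it contains every triangle and every $4$-cycle on any chosen vertices among $u^*,a,b,c,d$. This gives plenty of room to route the complementary cycle.

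For (1), suppose $w$ has two neighbors $a,b\in\Ap$. Then $awb$ is a triangle. The $4$-cycle $a\,u^*\,c\,d\,a$ meets $\{a,w,b\}$ only in $a$, since it avoids both $w$ and $b$. This gives a fish, exactly as in the proof of Lemma~\ref{lem:K4-large-interface}.

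For (2), suppose $w$ has two neighbors $z_1,z_2\in\Az$. Then $u^*z_1wz_2u^*$ is a $4$-cycle. The triangle $u^*abu^*$ meets it only in $u^*$, because $a,b\in\Ap$ are distinct from $z_1,z_2\in\Az$ and from $w\in W$.

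For (3), suppose $w$ is adjacent to $a\in\Ap$ and to $z\in\Az$. Then $a\,w\,z\,u^*\,a$ is a $4$-cycle, using the edges $az$-free route $a\to w\to z\to u^*\to a$; the edges $u^*z$ and $u^*a$ exist because $z,a\in N$. The triangle $a\,b\,c\,a$ lies inside $\Ap$. It meets the cycle only in $a$: it avoids $u^*$, it avoids $w$ since $w\in W$, and it avoids $z$ since $z\in\Az$. Again we obtain a fish.

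For the final assertion, combine the three items. Every edge at $w\in W$ goes to $\Ap$, to $\Az$, or to $W$, and $w$ is not adjacent to $u^*$. Items (1)–(3) therefore give $d_N(w)\le 1$. Lemma~\ref{lem:W-degree} gives $d(w)\ge2$, so $d_W(w)\ge1$, that is, $w$ has a neighbor in $W$.

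No step here is a real obstacle. The only point needing care is checking in each case that the chosen triangle and $4$-cycle share exactly one vertex. This follows from the disjointness of $\{u^*\}$, $\Ap$, $\Az$ and $W$, and from $|\Ap|=4$, which always leaves two spare vertices of $\Ap$ for routing the complementary cycle.
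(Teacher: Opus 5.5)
Your proof is correct and is essentially the paper's argument: it uses the same three triangle/$4$-cycle pairs ($awb$ with $a\,u^*\,c\,d\,a$; $u^*z_1wz_2u^*$ with $u^*abu^*$; $a\,w\,z\,u^*\,a$ with $abc$). The final assertion then follows, as in the paper, from $d_N(w)\le 1$, non-adjacency of $w$ to $u^*$, and Lemma~\ref{lem:W-degree}.
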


\begin{proof}
Let $w\in W$.

If $w$ had two neighbors $a,b\in \Ap$, then $awb$ would be a triangle. Since $G[\Ap]\cong K_4$, choose two vertices $c,d\in \Ap\setminus\{a,b\}$. Then
\[
a-u^*-c-d-a
\]
is a $4$-cycle sharing exactly the vertex $a$ with the triangle $awb$, a contradiction. Thus $d_{\Ap}(w)\le 1$.

If $w$ had two neighbors $s,t\in \Az$, then
\[
u^*-s-w-t-u^*
\]
would be a $4$-cycle. Since $G[\Ap]\cong K_4$, choose any edge $ab\in E(\Ap)$. Then $u^*ab$ is a triangle sharing exactly the vertex $u^*$ with that $4$-cycle, again a contradiction. Thus $d_{\Az}(w)\le 1$.

Finally, if $w$ had neighbors $a\in \Ap$ and $s\in \Az$, then
\[
a-w-s-u^*-a
\]
would be a $4$-cycle. Choose two vertices $b,c\in \Ap\setminus\{a\}$. Since $G[\Ap]\cong K_4$, the vertices $a,b,c$ form a triangle. This triangle shares exactly the vertex $a$ with the above $4$-cycle, a contradiction.

Hence (1), (2), and (3) all hold. The final statement follows because every vertex of $W$ has degree at least $2$ by Lemma~\ref{lem:W-degree}, but can have at most one neighbor in $\Ap\cup\Az$.
\end{proof}

\begin{proposition}[Interface independence principle]\label{prop:interface-independence}
Let
\[
U:=\{w\in W:d_{\Ap}(w)=1\}.
\]
Then $U$ is an independent set in $G[W]$.
\end{proposition}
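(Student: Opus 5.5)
We argue by contradiction, reusing the local argument from the proof of Lemma~\ref{lem:K4-large-interface}. Suppose two vertices $w_1,w_2\in U$ are adjacent in $G[W]$. By the definition of $U$ and Lemma~\ref{lem:local-basic}(1), each $w_i$ has exactly one neighbor $a_i\in\Ap$. We then produce a triangle and a $4$-cycle sharing exactly one vertex, contradicting $H(4,3)$-freeness. We split according to whether $a_1=a_2$.

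\emph{Case $a_1\neq a_2$.} The walk
\[
a_1-w_1-w_2-a_2-a_1
\]
is a $4$-cycle: $a_1a_2\in E(G)$ because $G[\Ap]\cong K_4$, and the four vertices are distinct since $w_1,w_2\in W$ and $a_1,a_2\in\Ap$. Since $|\Ap|=4$, we can choose $b\in\Ap\setminus\{a_1,a_2\}$. Then $u^*a_1bu^*$ is a triangle. It meets the $4$-cycle only in $a_1$, because $u^*\notin W\cup\{a_2\}$ and $b\notin\{a_1,a_2\}$. This gives a copy of $H(4,3)$.

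\emph{Case $a_1=a_2=:a$.} Then $aw_1w_2a$ is a triangle. Let $c,d$ be the two vertices of $\Ap\setminus\{a\}$ other than some third one, that is, choose two distinct $c,d\in\Ap\setminus\{a\}$. Then
\[
a-u^*-c-d-a
\]
is a $4$-cycle, using that $u^*$ is adjacent to all of $N$ and that $G[\Ap]\cong K_4$. It shares with the triangle exactly the vertex $a$, since $w_1,w_2\in W$ lie outside $N[u^*]$. Again this gives a fish.

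Both cases contradict the assumption, so no edge of $G[W]$ joins two vertices of $U$, and $U$ is independent. The only point needing care is checking vertex-disjointness apart from the shared vertex, which follows from $W\cap N[u^*]=\varnothing$ and $|\Ap|=4$. The hypothesis $e(\Ap,W)\le3$ is not needed for this statement.
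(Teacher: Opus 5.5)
Your proof is correct and follows the paper's argument essentially verbatim: the same case split on whether $a_1=a_2$, with the same triangle/$4$-cycle pairs, namely $a_1w_1w_2a_2a_1$ with $u^*a_1bu^*$, and $aw_1w_2a$ with $a\,u^*\,c\,d\,a$. Your remark that $e(\Ap,W)\le 3$ is not used is also accurate; the paper runs the identical argument inside Lemma~\ref{lem:K4-large-interface}, where $e(\Ap,W)\ge 4$.
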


\begin{proof}[Proof of Proposition~\ref{prop:interface-independence}]
Suppose to the contrary that $w_1w_2\in E(W)$ with $w_1,w_2\in U$. Let $a_i$ be the unique neighbor of $w_i$ in $\Ap$.

If $a_1\neq a_2$, then
\[
a_1-w_1-w_2-a_2-a_1
\]
is a $4$-cycle. Choose $b\in\Ap\setminus\{a_1,a_2\}$. The triangle $u^*a_1bu^*$ shares exactly the vertex $a_1$ with this $4$-cycle, giving a copy of $H(4,3)$, a contradiction.

Thus $a_1=a_2=:a$. Then $a,w_1,w_2$ form a triangle. Since $\Ap\cong K_4$, choose distinct vertices $b,c\in \Ap\setminus\{a\}$. The cycle
\[
a-u^*-b-c-a
\]
is a $4$-cycle sharing exactly the vertex $a$ with the triangle $aw_1w_2a$, which creates a copy of $H(4,3)$, a contradiction.
\end{proof}

\begin{corollary}\label{cor:ewalpha}
In the $K_4$-branch one has
\[
e(\Ap,W)=|U|\le \alpha(G[W]).
\]
\end{corollary}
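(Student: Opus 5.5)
The corollary follows from Lemma~\ref{lem:local-basic} and Proposition~\ref{prop:interface-independence}; we prove the equality first and the inequality second.

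For the equality, count the edges between $\Ap$ and $W$ from the side of $W$:
\[
e(\Ap,W)=\sum_{w\in W} d_{\Ap}(w).
\]
By Lemma~\ref{lem:local-basic}(1), each term $d_{\Ap}(w)$ is either $0$ or $1$. The vertices contributing $1$ are exactly the members of $U=\{w\in W:d_{\Ap}(w)=1\}$. Hence the sum equals $|U|$.

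For the inequality, Proposition~\ref{prop:interface-independence} states that $U$ is an independent set in $G[W]$. Since $U\subseteq W$, the set $U$ is one of the sets over which the independence number $\alpha(G[W])$ is a maximum. Therefore $|U|\le\alpha(G[W])$.

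Both steps only combine results already proved, so no part of the argument is expected to cause difficulty. The one convention to note is the degenerate case $W=\varnothing$. There $U=\varnothing$ and we take $\alpha(G[W])=0$, so the statement reads $0\le 0$.
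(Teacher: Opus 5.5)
Your proof is correct and matches the paper's argument. Like the paper, you use $d_{\Ap}(w)\le 1$ (Lemma~\ref{lem:local-basic}(1)) to get $e(\Ap,W)=|U|$, and then the independence of $U$ from Proposition~\ref{prop:interface-independence} to bound $|U|$ by $\alpha(G[W])$.
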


\begin{proof}
Each vertex of $U$ has exactly one neighbor in $\Ap$, and no vertex of $W\setminus U$ has any neighbor in $\Ap$. Thus every edge between $\Ap$ and $W$ arises from exactly one vertex of $U$, so $e(\Ap,W)=|U|$. Since $U$ is an independent set by Proposition~\ref{prop:interface-independence}, we obtain $|U|\le \alpha(G[W])$.
\end{proof}

This corollary is the point at which the threshold problem becomes finite. In the $K_4$-branch, \eqref{eq:zz3} gives $e(W)\le3$. The interface independence principle then bounds $e(\Ap,W)$ by the independence number of a graph with at most three edges and no isolated vertices.

\section{The obstruction family and the case $e(W)=0$}\label{sec:ew0}

When $e(W)=0$ in the $K_4$-branch, Lemma~\ref{lem:local-basic} forces $W=\varnothing$. Thus all vertices outside $\Ap\cup\{u^*\}$ lie in $\Az$ and are leaves adjacent only to $u^*$, so the graph is forced.

\begin{proposition}\label{prop:ew0}
Assume $G[\Ap]\cong K_4$ and $e(W)=0$. Then
\[
G\cong T_m:=K_1\vee\bigl(K_4\cup (m-10)K_1\bigr).
\]
Moreover:
\begin{enumerate}[label={\rm(\arabic*)}]
\item $T_m$ is $H(4,3)$-free;
\item $\rho(T_m)>\rhoext(m)$ for $m\in\{10,12,14,16\}$;
\item $\rho(T_m)<\rhoext(m)$ for every even $m\ge 18$.
\end{enumerate}
\end{proposition}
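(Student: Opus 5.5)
The proof has two parts. First we force the structure $G\cong T_m$ from the branch hypotheses. Then we compare $\rho(T_m)$ with $\rhoext(m)$ through the equitable quotient polynomial.

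\textbf{Structure.} Suppose $W\neq\varnothing$. By Lemma~\ref{lem:local-basic}, every $w\in W$ has a neighbor in $W$, which contradicts $e(W)=0$. Hence $W=\varnothing$. Every vertex of $\Az$ is then adjacent only to $u^*$. By \eqref{eq:mcount}, $m=|N|+6$, so $|\Az|=|N|-4=m-10$, and therefore $G\cong T_m$.

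\textbf{Item (1).} Every triangle and every $4$-cycle of $T_m$ avoids the leaves, so it lies inside the $K_5$ induced on $\{u^*\}\cup\Ap$. A triangle and a $4$-cycle inside a $5$-vertex set share at least $3+4-5=2$ vertices. Thus no fish occurs.

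\textbf{Quotient polynomial.} Use the equitable partition $\{u^*\},\Ap,\Az$. Its quotient matrix is
\[
\begin{pmatrix}0&4&m-10\\1&3&0\\1&0&0\end{pmatrix},
\]
whose characteristic polynomial is
\[
g_m(x)=x^3-3x^2-(m-6)x+3(m-10).
\]
The matrix is irreducible for $m\ge 11$. For $m=10$ the graph is $K_5$ with $\rho=4$, which can be checked directly. So $\rho(T_m)$ is the largest root of $g_m$.

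The critical points of $g_m$ are at most $1+\sqrt{(m-3)/3}$, which is smaller than $\Lm<\rhoext(m)$. Hence $g_m$ is strictly increasing on $[\Lm,\infty)$. It follows that $\rho(T_m)<\rhoext(m)$ if and only if $g_m(\rhoext(m))>0$, and $\rho(T_m)>\rhoext(m)$ if and only if $g_m(\rhoext(m))<0$.

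To evaluate this sign, divide $p_m$ by $g_m$:
\[
p_m(x)=x\,g_m(x)+h_m(x),\qquad h_m(x)=3x^3-6x^2-(4m-32)x+\tfrac m2-1.
\]
At $r=\rhoext(m)$ we have $p_m(r)=0$, so $r\,g_m(r)=-h_m(r)$. Therefore the whole question reduces to the sign of $h_m(r)$.

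\textbf{Items (2) and (3).} For $m\in\{10,12,14,16\}$, compute $r$ numerically, or verify the sign of $h_m(r)$ exactly via a Sturm/interval check as in Appendix~\ref{app:signchecks}. This gives $h_m(r)>0$, hence $\rho(T_m)>\rhoext(m)$.

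For even $m\ge18$ we must show $h_m(r)<0$. This is the main obstacle, because $h_m$ has leading behaviour $3x^3-4mx$ with $x\approx\sqrt m$, and these two terms nearly cancel. A crude bound on $r$ therefore will not suffice; we need a two-sided enclosure.
\begin{itemize}
\item The lower bound is $r>\Lm$, from Lemma~\ref{lem:Lm-lower}.
\item For an upper bound, take $U_m=\Lm+\delta$ with an explicit small $\delta$, for instance $\delta=1/(4\sqrt m)$. Check that $p_m(U_m)>0$ and that $p_m$ is increasing beyond $\Lm$. Together these give $r<U_m$.
\end{itemize}
Next, reduce $h_m$ using $r^2=m+(m-2)/r+\cdots$ from $p_m(r)=0$. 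Alternatively, show that $h_m$ is increasing on $[\Lm,U_m]$ and that $h_m(U_m)<0$. The resulting inequality in $\sqrt{4m-5}$ should hold for $m\ge m_0$ by elementary estimates. Any remaining finitely many even $m$ between $18$ and $m_0$ are checked exactly, as in the appendix.
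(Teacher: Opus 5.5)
Your derivation of $G\cong T_m$ and your argument for (1) match the paper's. For (2) you use the same separating-point idea. You should make that step exact rather than numerical. The paper exhibits rationals $r_m$ with $p_m(r_m)>0>g_m(r_m)$ and both polynomials increasing on $[r_m,\infty)$. At $m=16$ the two roots differ by only about $0.006$, so uncertified decimals are not enough.

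The gap is in (3), which the proposal does not actually prove. The upper bound $U_m$, the sign of $h_m(U_m)$, the value of $m_0$ and the residual finite check are all left as things that ``should hold.''

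More to the point, the obstacle that motivates this machinery does not exist. There is no near-cancellation: since $r^2\approx m+\sqrt m$, one has $3r^3-4mr=r(3r^2-4m)\approx -m^{3/2}$. In fact $h_m(r)=-r\,g_m(r)$ exactly at $r=\rhoext(m)$, so dividing $p_m$ by $g_m$ merely restates the question.

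The missing step is to use the monotonicity you already established. Since $g_m$ is increasing on $[\Lm,\infty)$ and $\rhoext(m)>\Lm$, you get $g_m(\rhoext(m))>g_m(\Lm)$. Substituting $\Lm^2=m-1+\tfrac12\sqrt{4m-5}$ gives
\[
g_m(\Lm)=\frac{4m+5\sqrt{4m-5}-103}{4}.
\]
This is increasing in $m$ and positive at $m=18$, because $25\cdot 67>31^2$.

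This is exactly the paper's argument. Only the lower bound $\Lm$ is needed, with no enclosure from above and no finite case check. Note that the same expression is negative at $m=16$, since $25\cdot 59<39^2$. So the crude lower bound stops sufficing precisely where the obstruction $T_m$ appears. Your enclosure route would eventually succeed, but it is a detour around a one-line computation.
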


\begin{proof}
By Lemma~\ref{lem:local-basic}, every vertex of $W$ has a neighbor in $W$. Since $e(W)=0$, this forces $W=\varnothing$. Thus every vertex outside $\Ap\cup\{u^*\}$ lies in $\Az$ and is adjacent only to $u^*$, so the graph is exactly $T_m$.

The partition
\[
\{u^*\},\qquad \Ap,\qquad \Az
\]
is equitable, with quotient matrix
\[
Q_T=
\begin{pmatrix}
0&4&m-10\\
1&3&0\\
1&0&0
\end{pmatrix}.
\]
Hence the characteristic polynomial is
\[
q_T(x)=x^3-3x^2+(6-m)x+3m-30.
\]

To see that $T_m$ is $H(4,3)$-free, note that every triangle of $T_m$ lies entirely in $\{u^*\}\cup \Ap$, while every $4$-cycle also lies in $\{u^*\}\cup \Ap$ because vertices of $\Az$ are leaves. Any triangle and any $4$-cycle therefore share at least two vertices, never exactly one.

For the remaining small even values, we avoid decimal comparison by inserting a rational number strictly between the two roots. Let
\[
r_{10}=\frac72,\qquad r_{12}=4,
\qquad r_{14}=\frac{21}{5},
\qquad r_{16}=\frac{869}{200}.
\]
A direct calculation gives
\[
\begin{array}{c|c|c}
 m & p_m(r_m) & q_T(r_m)\\ \hline
 10 & 57/16 & -63/8\\
 12 & 29 & -2\\
 14 & 12381/625 & -54/125\\
 16 & 837095921/1600000000 & -461691/8000000
\end{array}
\]
and in each of these four cases both $p_m$ and $q_T$ are increasing on $[r_m,\infty)$. Hence the largest root of $p_m$ is less than $r_m$, while the Perron root of $T_m$, which is the largest root of $q_T$, is greater than $r_m$. Therefore $\rho(T_m)>\rhoext(m)$ for $m\in\{10,12,14,16\}$.

For $m\ge 18$, Appendix~\ref{app:signchecks} proves that $q_T(\Lm)>0$ and that $q_T$ is strictly increasing on $[\Lm,\infty)$. Since $\rhoext(m)>\Lm$, it follows that $q_T(\rhoext(m))>0$, so the Perron root of $T_m$ lies strictly below $\rhoext(m)$.
\end{proof}

\begin{remark}
Proposition~\ref{prop:ew0} shows that the exact threshold, whatever it is, cannot be smaller than $18$.
\end{remark}

\section{The case $e(W)=1$: exhaustive classification and exact comparison}\label{sec:ew1}

We now classify the admissible graphs in the case $e(W)=1$ and compare each family with $\rhoext(m)$.

\begin{proposition}\label{prop:ew1-class}
Assume $G[\Ap]\cong K_4$ and $e(W)=1$. Then every admissible $H(4,3)$-free graph is isomorphic to exactly one of the following three families:
\begin{enumerate}[label={\rm(\arabic*)}]
\item the \emph{same-$\Az$ family}: the two vertices of the unique edge of $W$ are both adjacent to the same special vertex of $\Az$;
\item the \emph{distinct-$\Az$ family}: the two vertices of the unique edge of $W$ are adjacent to two distinct special vertices of $\Az$;
\item the \emph{mixed family}: one endpoint of the unique edge of $W$ is adjacent to one vertex of $\Ap$, while the other endpoint is adjacent to one special vertex of $\Az$.
\end{enumerate}
All remaining vertices of $\Az$ are leaves adjacent only to $u^*$.
\end{proposition}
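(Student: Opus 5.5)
The classification follows almost entirely from the local restrictions of Lemma~\ref{lem:local-basic} and the interface independence principle (Proposition~\ref{prop:interface-independence}). Let $xy$ be the unique edge of $G[W]$.

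First, $W=\{x,y\}$. Indeed, by Lemma~\ref{lem:local-basic} every vertex of $W$ has a neighbor in $W$. Since $e(W)=1$, the only vertices of $W$ with a neighbor in $W$ are $x$ and $y$, so no other vertex can lie in $W$.

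Next we pin down $N(x)$ and $N(y)$. By Lemma~\ref{lem:W-degree}, each of $x,y$ has degree at least $2$. Its only neighbor inside $W$ is the other endpoint, so each has at least one neighbor in $N=\Ap\cup\Az$. By Lemma~\ref{lem:local-basic}(1)--(3), each has exactly one neighbor in $N$, and that neighbor lies either in $\Ap$ or in $\Az$. By Proposition~\ref{prop:interface-independence}, $x$ and $y$ cannot both have their neighbor in $\Ap$, since $\{x,y\}$ is not independent. This leaves three patterns:
\begin{enumerate}[label={\rm(\alph*)}]
\item both neighbors lie in $\Az$ and coincide;
\item both neighbors lie in $\Az$ and are distinct;
\item one neighbor lies in $\Ap$ and the other in $\Az$.
\end{enumerate}

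The remaining structure is forced by the edge decomposition \eqref{eq:mcount}. Every vertex of $\Az$ not adjacent to $x$ or $y$ has no neighbors besides $u^*$, since $\Az$ has no neighbors inside $N$ and $W=\{x,y\}$. Such vertices are therefore leaves at $u^*$. The only free parameter is then the number of these leaves, which is determined by $m$. Moreover, since $G[\Ap]\cong K_4$ is vertex-transitive and its automorphisms extend to $G$, in pattern (c) the choice of vertex in $\Ap$ does not matter up to isomorphism. Hence each pattern gives a single graph for each $m$.

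Two checks remain.
\begin{itemize}
\item \emph{Realizability.} Each of the three configurations is genuinely $H(4,3)$-free. In (a), the only new cycle through $W$ is the triangle $xyz$, where $z$ is the common neighbor in $\Az$. Every $4$-cycle of $G$ either lies in $\{u^*\}\cup\Ap$ or is a cycle such as $u^*zxy$; the latter is impossible because $y$ has no other neighbor. So one checks that the triangle $xyz$ and any $4$-cycle share at least two vertices or none, and likewise for the $K_4$-triangles against the new cycles. In (b), the $5$-cycle $u^*z_1xyz_2$ is harmless. In (c), the $5$-cycle $u^*a x y z$ and the triangles $u^*ab$ should be checked for a triangle sharing exactly one vertex with a $4$-cycle; since no $4$-cycle passes through $x$ or $y$, no copy of $H(4,3)$ appears.
\item \emph{Uniqueness.} The three families are pairwise non-isomorphic. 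This can be seen from invariants such as the number of triangles, or the degree of the vertex of $\Az$ meeting $W$ ($3$ in (a), $2$ in (b) and (c)), together with whether some vertex of $W$ has a neighbor in the $K_4$-core.
\end{itemize}

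The main obstacle is the realizability check in case (c). Here the mixed cycle $a-x-y-z-u^*-a$ has length $5$, so it must be confirmed that it yields no $4$-cycle. I expect this to go through by noting that $d(x)=d(y)=2$, so every cycle through $x$ or $y$ must use the whole path $a\,x\,y\,z$, and any such cycle has length at least $5$.
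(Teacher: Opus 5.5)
Your proposal is correct and follows the paper's argument. Lemma~\ref{lem:local-basic} together with the degree bound $d(w)\ge 2$ gives each endpoint of the $W$-edge exactly one neighbor in $N$, and Proposition~\ref{prop:interface-independence} forbids both endpoints from meeting $\Ap$, which leaves the same three patterns; your explicit justification that $W=\{x,y\}$ and that the other vertices of $\Az$ are leaves spells out steps the paper leaves implicit, while the realizability check goes beyond what the classification requires but does no harm.
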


\begin{proof}
Since $e(W)=1$, the graph $G[W]$ is a single edge $xy$. By Proposition~\ref{prop:interface-independence}, at most one of $x,y$ may belong to $U$, hence at most one of them may be adjacent to a vertex of $\Ap$.

By Lemma~\ref{lem:local-basic}, each endpoint has at most one neighbor in $\Az$, and no endpoint can meet both $\Ap$ and $\Az$. Since every endpoint has degree at least $2$, the possibilities are exhausted as follows.

If neither endpoint meets $\Ap$, then both must meet $\Az$. Their $\Az$-neighbors may coincide or may be distinct, giving families \rm(1) and \rm(2).

If exactly one endpoint meets $\Ap$, then the other endpoint cannot also meet $\Ap$ and must instead meet one vertex of $\Az$, giving family \rm(3).

No other possibility survives the degree and compatibility constraints.
\end{proof}

We now treat the three families separately.

\begin{proposition}\label{prop:sameA0}
In the same-$\Az$ family, the characteristic polynomial of the equitable quotient matrix is
\[
f_{\mathrm{same}}(x)=x^5-4x^4+(10-m)x^3+(4m-42)x^2+(19-m)x+84-6m.
\]
For every even $m\ge 18$, the Perron root of this family is strictly smaller than $\rhoext(m)$.
\end{proposition}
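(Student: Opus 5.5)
The plan is to first fix the structure of the same-$\Az$ family and write down an equitable partition, then compare the resulting quintic with $p_m$ via Lemma~\ref{lem:root-template}.

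\textbf{Structure and quotient.} In this family $W=\{x,y\}$ with $xy\in E(G)$. Both $x$ and $y$ are adjacent to a single special vertex $z\in\Az$. The remaining $m-13$ vertices of $\Az$ are leaves at $u^*$, since $m=|N|+6+e(N,W)+1$ with $e(N,W)=2$ and $|N|=4+1+(m-13)$. The partition
\[
\{u^*\},\quad \Ap,\quad \{z\},\quad \{x,y\},\quad \Az\setminus\{z\}
\]
is equitable. Its quotient matrix has rows $(0,4,1,0,m-13)$, $(1,3,0,0,0)$, $(1,0,0,2,0)$, $(0,0,1,1,0)$ and $(1,0,0,0,0)$. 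I would expand its characteristic polynomial along the last row and column to confirm $f_{\mathrm{same}}$. Before comparing roots I would also check that the family really is admissible: it is $H(4,3)$-free because the triangle $zxy$ and the $4$-cycles through $u^*$ and $\Ap$ are disjoint. Admissibility is not needed for the inequality itself, however.

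\textbf{Comparison.} Divide $f_{\mathrm{same}}$ by $p_m$. Since $f_{\mathrm{same}}$ has degree $5$, the quotient has the form $q(x)=x+c$, and matching the $x^4$ coefficient gives $c=-4$. The remainder $R_m(x)=f_{\mathrm{same}}-(x-4)p_m$ is then a cubic, computed as follows. The $x^3$ terms give $(10-m)-(-m)=10$. The $x^2$ terms give $(4m-42)-\bigl(-(m-2)-4(-m)\bigr)=(4m-42)-(3m+2)=m-44$. The $x$ terms give $(19-m)-\bigl(\tfrac m2-1+4(m-2)\bigr)=28-\tfrac{11m}{2}$. The constant is $(84-6m)+4(\tfrac m2-1)=80-4m$. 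Thus
\[
R_m(x)=10x^3+(m-44)x^2+\Bigl(28-\tfrac{11m}{2}\Bigr)x+80-4m.
\]
Take $x_0=\Lm$, which is admissible by Lemma~\ref{lem:Lm-lower}. Then $q(x)=x-4\ge0$ for $x\ge\Lm$, because $\Lm\ge(1+\sqrt{67})/2>4.5$ when $m\ge18$.

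\textbf{Main obstacle: $R_m>0$ on $[\Lm,\infty)$ uniformly in $m$.} The idea is that $x^2\approx m$ near $\Lm$, so $10x^3$ dominates. More concretely, for $x\ge\Lm$ one has $x^2\ge x+m-\tfrac54$, hence $m\le x^2-x+\tfrac54$. Note that $R_m$ is linear in $m$ with $m$-coefficient $x^2-\tfrac{11}{2}x-4$. If this coefficient is nonnegative, I would use $m\ge18$ to reduce to a fixed polynomial in $x$. If it is negative, I would substitute $m\le x^2-x+\tfrac54$ and get a quartic in $x$ with positive leading coefficient, whose positivity for $x\ge(1+\sqrt{67})/2$ can be checked directly. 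Alternatively, a crude bound suffices: $10x^3-44x^2\ge 10x^3-10x^3$ fails, so instead write $R_m\ge x\bigl(10x^2-\tfrac{11}{2}m-44x\bigr)+m x^2-4m+\dots$ and use $x^2>m$ to get $10x^2-\tfrac{11}{2}m>4.5m$, which beats $44x$ once $m$ is moderately large. Small $m$ would be handled by an exact sign check at $\Lm$ together with monotonicity of $R_m$, in the style of Appendix~\ref{app:signchecks}. I expect this uniform positivity step to be the only delicate part.

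Once $R_m>0$ on $[\Lm,\infty)$ is established, Lemma~\ref{lem:root-template} gives that the Perron root is strictly less than $\rhoext(m)$.
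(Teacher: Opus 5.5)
Your skeleton matches the paper's proof. You use the same five-class equitable partition and the same division $f_{\mathrm{same}}=(x-4)p_m+R_{\mathrm{same}}$; your remainder agrees exactly with the paper's. You then apply Lemma~\ref{lem:root-template} with $x_0=\Lm$. However, two computational slips make steps fail as written.

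\emph{Leaf count.} Put $L=\Az\setminus\{z\}$, so $|N|=5+|L|$. With $e(\Ap)=6$, $e(N,W)=2$ and $e(W)=1$, the identity $m=|N|+e(\Ap)+e(N,W)+e(W)$ gives $|L|=m-14$, not $m-13$. Your own count $|N|=4+1+(m-13)$ yields $m=m+1$. With $m-13$ in the first row, your matrix has characteristic polynomial $x^5-4x^4+(9-m)x^3+(4m-38)x^2+(18-m)x+78-6m$. This is $f_{\mathrm{same}}$ with $m$ replaced by $m+1$, so the expansion you propose would not confirm the displayed polynomial. With $m-14$, as in the paper, it does.

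\emph{The constraint on $m$.} Since $\Lm^2-\Lm=m-\tfrac32$ and $x^2-x$ is increasing, $x\ge\Lm$ gives $m\le x^2-x+\tfrac32$. Your bound $m\le x^2-x+\tfrac54$ is false at $x=\Lm$. This matters in the branch where the $m$-coefficient $x^2-\tfrac{11}{2}x-4$ is negative: there you need a valid upper bound on $m$, so your substitution does not give a lower bound for $R_m$.

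With $\tfrac32$ your argument works. For fixed $x\ge\Lm$ the admissible $m$ lie in $[18,x^2-x+\tfrac32]$, and $R_m(x)$ is affine in $m$, so
\[
R_m(x)\ge\min\Bigl\{10x^3-26x^2-71x+8,\ x^4+\tfrac72x^3-41x^2+\tfrac{95}{4}x+74\Bigr\}.
\]
Both polynomials are positive on $[\tfrac92,\infty)$: each has positive value and positive first derivative at $\tfrac92$, and positive second derivative from there on. Since $\Lm>\tfrac92$, this gives $R_m>0$ on $[\Lm,\infty)$. The garbled crude-bound alternative can simply be dropped.

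With these corrections your proof is complete, and it differs from the paper only in the positivity step. The paper evaluates $R_{\mathrm{same}}(\Lm)$ and $R'_{\mathrm{same}}(\Lm)$ as surd expressions in $m$. It shows each is increasing in $m$ and positive at $m=18$, then uses $R''_{\mathrm{same}}>0$ on $[\Lm,\infty)$ to propagate. Your route instead eliminates $m$ through linearity and the constraint $m\le x^2-x+\tfrac32$, reducing everything to two fixed one-variable polynomials with no surds. It is the same device the paper itself uses for $f_0$ and $f_1$ in Appendix~\ref{app:ew2-t2-p3}, and here it is shorter.
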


\begin{proof}
The equitable partition
\[
\{u^*\},\qquad \Ap,\qquad \{z\},\qquad \{x,y\},\qquad L
\]
has quotient matrix
\[
Q_{\mathrm{same}}=
\begin{pmatrix}
0&4&1&0&m-14\\
1&3&0&0&0\\
1&0&0&2&0\\
0&0&1&1&0\\
1&0&0&0&0
\end{pmatrix},
\]
where $L$ denotes the ordinary leaves in $\Az$. Its characteristic polynomial is exactly the displayed polynomial.

A direct division gives
\[
f_{\mathrm{same}}(x)=(x-4)p_m(x)+R_{\mathrm{same}}(x),
\]
where
\[
R_{\mathrm{same}}(x)=10x^3+(m-44)x^2+\Bigl(28-\frac{11m}{2}\Bigr)x+80-4m.
\]
By Appendix~\ref{app:signchecks}, one has $R_{\mathrm{same}}(x)>0$ for all $x\ge \Lm$ whenever $m\ge 18$. Also $\Lm>4$ for every even $m\ge 18$, so $x-4\ge 0$ on $[\Lm,\infty)$. Since $\rhoext(m)>\Lm$, Lemma~\ref{lem:root-template} applies.
\end{proof}

\begin{proposition}\label{prop:distinctA0}
In the distinct-$\Az$ family, the characteristic polynomial of the equitable quotient matrix is
\[
f_{\mathrm{dist}}(x)=x^5-4x^4+(11-m)x^3+(4m-45)x^2+(28-2m)x+45-3m.
\]
For every even $m\ge 18$, the Perron root of this family is strictly smaller than $\rhoext(m)$.
\end{proposition}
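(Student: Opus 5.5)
We will follow the same template as Proposition~\ref{prop:sameA0}. Let $xy$ be the unique edge of $G[W]$, with $x\sim z_1$ and $y\sim z_2$ for distinct $z_1,z_2\in\Az$. By Proposition~\ref{prop:ew1-class}, all other vertices of $\Az$ are leaves at $u^*$.

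\emph{Step 1: count the leaves.} The edge count \eqref{eq:mcount} gives
\[
m=(4+2+|L|)+6+2+1,
\]
so $|L|=m-15$.

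\emph{Step 2: build the quotient.} The partition
\[
\{u^*\},\quad \Ap,\quad \{z_1,z_2\},\quad \{x,y\},\quad L
\]
is equitable, because the swap $z_1\leftrightarrow z_2$, $x\leftrightarrow y$ is an automorphism. Its quotient matrix is
\[
Q_{\mathrm{dist}}=
\begin{pmatrix}
0&4&2&0&m-15\\
1&3&0&0&0\\
1&0&0&1&0\\
0&0&1&1&0\\
1&0&0&0&0
\end{pmatrix}.
\]
Expanding $\det(xI-Q_{\mathrm{dist}})$ along the last row or column should reproduce the displayed $f_{\mathrm{dist}}$; this is a routine check. Since the quotient is irreducible, the Perron root of the family is the largest real root of $f_{\mathrm{dist}}$.

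\emph{Step 3: divide by $p_m$.} Expanding gives
\[
(x-4)p_m(x)=x^5-4x^4-mx^3+(3m+2)x^2+\Bigl(\tfrac{9m}{2}-9\Bigr)x-2m+4.
\]
Hence
\[
f_{\mathrm{dist}}(x)=(x-4)p_m(x)+R_{\mathrm{dist}}(x),
\qquad
R_{\mathrm{dist}}(x)=11x^3+(m-47)x^2+\Bigl(37-\tfrac{13m}{2}\Bigr)x+41-m.
\]
Since $\Lm>4$ for $m\ge18$, the factor $x-4$ is nonnegative on $[\Lm,\infty)$. If $R_{\mathrm{dist}}>0$ on $[\Lm,\infty)$, then Lemma~\ref{lem:root-template} with $x_0=\Lm$ (using Lemma~\ref{lem:Lm-lower}) gives a Perron root strictly below $\rhoext(m)$.

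\emph{Step 4: positivity of $R_{\mathrm{dist}}$ (the main obstacle).} The natural first attempt is to substitute $x^2\ge x+m-1$, which holds on $[\Lm,\infty)$ since $\Lm^2=\Lm+m-1$. This turns $11x^3$ into $11x^2+(11m-11)x$, but it is too lossy for small $m$. For example, at $m=18$ the coefficient $(m-36)x$ becomes negative and the crude lower bound fails.

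Instead I would split the verification in two.
\begin{itemize}
\item Show $R_{\mathrm{dist}}'(x)=33x^2+2(m-47)x+37-\tfrac{13m}{2}>0$ on $[\Lm,\infty)$. Here $33x^2\ge 33(x+m-1)$ absorbs both negative terms, since $33(m-1)-\tfrac{13m}{2}+37>0$ and $33x+2(m-47)x>0$ whenever $m\ge 18$.
\item Verify $R_{\mathrm{dist}}(\Lm)>0$. Writing $\Lm^2=\Lm+m-1$ and $\Lm^3=\Lm^2+(m-1)\Lm$ reduces $R_{\mathrm{dist}}(\Lm)$ to $\alpha(m)+\beta(m)\sqrt{4m-5}$ with polynomial coefficients $\alpha,\beta$ of degree at most two. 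Its positivity for even $m\ge18$ can be settled by isolating the square root and squaring, or by an exact check at finitely many small $m$ plus a leading-term argument for large $m$.
\end{itemize}
This is the kind of exact polynomial sign check recorded in Appendix~\ref{app:signchecks}. Together the two items give $R_{\mathrm{dist}}>0$ on $[\Lm,\infty)$, which completes the argument.
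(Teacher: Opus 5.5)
Your Steps 1--3 are correct and follow the paper: the same leaf count $|L|=m-15$, the same quotient $Q_{\mathrm{dist}}$, the same division with quotient $x-4$ and remainder $R_{\mathrm{dist}}$, and the same plan of proving $R'_{\mathrm{dist}}>0$ on $[\Lm,\infty)$ together with $R_{\mathrm{dist}}(\Lm)>0$ before invoking Lemma~\ref{lem:root-template}. The gap is in Step~4, the only substantive part, which fails as written.

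\emph{The identity for $\Lm$ is wrong.} $\Lm$ is the positive root of $x^2-x-(m-\tfrac32)$, not of $x^2-x-(m-1)$. Indeed $\Lm^2=m-1+\tfrac12\sqrt{4m-5}$, so $\Lm^2-\Lm=m-\tfrac32$; this is exactly the bound $\rho^2-\rho>m-\tfrac32$ used in Lemma~\ref{lem:core-ineq}. Hence $x^2\ge x+m-1$ is false at $x=\Lm$, and reducing $R_{\mathrm{dist}}(\Lm)$ with $\Lm^2=\Lm+m-1$ produces the wrong $\alpha,\beta$.

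\emph{The derivative argument fails even with the correct identity.} You have $33+2(m-47)=2m-61$, which is negative for $18\le m\le 30$. So your claim that $33x+2(m-47)x>0$ for $m\ge18$ is false. The substitution only yields $R'_{\mathrm{dist}}(x)\ge(2m-61)x+\tfrac{53m-25}{2}$. For these $m$ this bound decreases in $x$ and eventually turns negative, so it proves nothing on the whole ray.

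\emph{The repair is the paper's route.} Since $R''_{\mathrm{dist}}(x)=66x+2m-94>0$ for $x\ge\Lm>4$, the function $R'_{\mathrm{dist}}$ is increasing there. It therefore suffices to check
\[
R'_{\mathrm{dist}}(\Lm)=\tfrac12\bigl((2m-61)\sqrt{4m-5}+55m-86\bigr)>0.
\]
This is clear for $m\ge31$. For $18\le m\le30$ one has $(61-2m)\sqrt{4m-5}\le 25\sqrt{115}<270<55m-86$.

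\emph{The endpoint value.} The correct value is
\[
R_{\mathrm{dist}}(\Lm)=\tfrac14\bigl(4m^2-143m+349+(11m-31)\sqrt{4m-5}\bigr).
\]
Its rational part is negative for $18\le m\le 33$, so a genuine check is needed. However, $4m^2-143m+349$ is increasing for $m\ge18$ with value $-929$ at $m=18$. Meanwhile $(11m-31)\sqrt{4m-5}\ge 167\sqrt{67}>1336$. So positivity is immediate. (The paper instead differentiates $R_{\mathrm{dist}}(\Lm)$ and $R'_{\mathrm{dist}}(\Lm)$ with respect to $m$ and checks $m=18$.)

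With these two corrected facts, $R_{\mathrm{dist}}>0$ on $[\Lm,\infty)$, and the conclusion of your Step~3 goes through.
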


\begin{proof}
The quotient matrix is
\[
Q_{\mathrm{dist}}=
\begin{pmatrix}
0&4&2&0&m-15\\
1&3&0&0&0\\
1&0&0&1&0\\
0&0&1&1&0\\
1&0&0&0&0
\end{pmatrix}.
\]
Dividing $f_{\mathrm{dist}}(x)$ by $p_m(x)$ yields
\[
f_{\mathrm{dist}}(x)=(x-4)p_m(x)+R_{\mathrm{dist}}(x),
\]
where
\[
R_{\mathrm{dist}}(x)=11x^3+(m-47)x^2+\Bigl(37-\frac{13m}{2}\Bigr)x+41-m.
\]
The appendix proves that $R_{\mathrm{dist}}(x)>0$ for all $x\ge \Lm$ and all even $m\ge 18$. Also $\Lm>4$ for every even $m\ge 18$, so $x-4\ge 0$ on $[\Lm,\infty)$. The conclusion again follows from Lemma~\ref{lem:root-template}.
\end{proof}

\begin{proposition}\label{prop:mixed}
In the mixed family, the characteristic polynomial of the equitable quotient matrix is
\begin{align*}
f_{\mathrm{mix}}(x)
&=x^7-2x^6+(3-m)x^5+(2m-24)x^4 \\
&\quad +(6m-61)x^3+(84-6m)x^2+(90-7m)x+2m-28.
\end{align*}
For every even $m\ge 18$, the Perron root of this family is strictly smaller than $\rhoext(m)$.
\end{proposition}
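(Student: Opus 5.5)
Following the pattern of Propositions~\ref{prop:sameA0} and~\ref{prop:distinctA0}, I will first set up the mixed family explicitly and write down an equitable partition. Let $xy$ be the unique edge of $G[W]$, with $x$ adjacent to some $a\in\Ap$ and $y$ adjacent to a special vertex $z\in\Az$. The natural partition is
\[
\{u^*\},\quad \{a\},\quad \Ap\setminus\{a\},\quad \{x\},\quad \{y\},\quad \{z\},\quad L,
\]
where $L$ is the set of $m-14$ ordinary leaves of $\Az$. The count uses \eqref{eq:mcount}: $|N|=m-14+5$, $e(\Ap)=6$, $e(N,W)=2$, $e(W)=1$. The three vertices of $\Ap\setminus\{a\}$ are symmetric, so this partition is equitable and gives a $7\times7$ quotient matrix $Q_{\mathrm{mix}}$. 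I will verify that its characteristic polynomial is the displayed $f_{\mathrm{mix}}$, by direct expansion or symbolically as in Appendix~\ref{app:signchecks}. Since $G$ is connected, the quotient is irreducible, and its largest root is $\rho(G)$.

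Next I will divide by $p_m$. Since $\deg f_{\mathrm{mix}}=7$, the quotient is cubic:
\[
f_{\mathrm{mix}}(x)=q_{\mathrm{mix}}(x)p_m(x)+R_{\mathrm{mix}}(x),\qquad q_{\mathrm{mix}}(x)=x^3-2x^2+3x+(\text{const}),
\]
where the coefficients follow by matching the $x^6,x^5,x^4$ terms. The $x^5$ coefficient gives $3-m+m=3$. The $x^4$ coefficient gives $(2m-24)+(m-2)-2m=m-26$, plus the correction from the $-2x^2$ term, so the constant of $q$ becomes $-2m+\dots$ after bookkeeping. The remainder $R_{\mathrm{mix}}$ has degree at most $3$, with coefficients linear (possibly quadratic) in $m$. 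I will then check the two hypotheses of Lemma~\ref{lem:root-template} with $x_0=\Lm$, which is legitimate by Lemma~\ref{lem:Lm-lower}. Hypothesis (i) is that $q_{\mathrm{mix}}\ge0$ on $[\Lm,\infty)$. Because $\Lm\approx\sqrt m$ grows, the leading $x^3$ term should dominate any linear-in-$m$ constant term once $x\ge\Lm$ and $m\ge18$, and a derivative check gives monotonicity there. Hypothesis (ii) is that $R_{\mathrm{mix}}>0$ on $[\Lm,\infty)$. I will show $R_{\mathrm{mix}}(\Lm)>0$ and that $R_{\mathrm{mix}}$ is increasing on $[\Lm,\infty)$, by substituting $x=\Lm$ and using $\Lm^2=\Lm+m-\tfrac54$ to reduce to an expression of the form $\alpha(m)+\beta(m)\sqrt{4m-5}$ whose sign is elementary for $m\ge18$. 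This is recorded in the appendix alongside the same-$\Az$ and distinct-$\Az$ checks.

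The main obstacle is the remainder sign at the endpoint $m=18$ (and possibly $20$). For the mixed family, which carries one interface edge $e(\Ap,W)=1$, the spectral radius should be closest to $\rhoext(m)$, so the margin from the crude bound $x_0=\Lm$ may be too small or even fail. If $R_{\mathrm{mix}}(\Lm)$ is negative for small $m$, I will fall back on two options. The first is to replace $x_0$ by a sharper rational lower bound for $\rhoext(m)$ valid for $m\ge18$, verified by $p_m(x_0)<0$, and to redo (i),(ii) from that point. The second is to treat $m=18,20,\dots$ up to the point where the generic argument kicks in by an explicit rational separator $r_m$, with $p_m(r_m)>0$ and $f_{\mathrm{mix}}(r_m)>0$, together with monotonicity, exactly as in the proof of Proposition~\ref{prop:ew0}. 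I would also try choosing a different multiplier, e.g.\ writing $f_{\mathrm{mix}}=(q_{\mathrm{mix}}+c)p_m+(R_{\mathrm{mix}}-cp_m)$, to shift positivity between the quotient and the remainder if one of them is borderline.
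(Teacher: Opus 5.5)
Your plan is essentially the paper's proof. It uses the same seven-class equitable partition (listed in a different order) and divides by $p_m$ to get $q_{\mathrm{mix}}(x)=x^3-2x^2+3x+m-26$ and a cubic remainder with coefficients quadratic in $m$. It then applies Lemma~\ref{lem:root-template} with $x_0=\Lm$, checked by evaluating at $\Lm$ (reducing to $\alpha(m)+\beta(m)\sqrt{4m-5}$) plus monotonicity. The margin at $m=18$ is large ($R_{\mathrm{mix}}(\Lm)$ is of order $10^3$), so none of your fallbacks are needed.

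Two slips to fix:
\begin{itemize}
\item Your expression $(2m-24)+(m-2)-2m=m-26$ already includes the contribution of the $-2x^2$ term. There is no further ``$-2m$'' correction to the constant of $q_{\mathrm{mix}}$.
\item The reduction identity is $\Lm^2=\Lm+m-\tfrac32$, not $\Lm^2=\Lm+m-\tfrac54$, since $\Lm$ is the larger root of $x^2-x-(m-\tfrac32)$.
\end{itemize}
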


\begin{proof}
The corresponding quotient matrix is
\[
Q_{\mathrm{mix}}=
\begin{pmatrix}
0&1&3&1&0&0&m-14\\
1&0&3&0&1&0&0\\
1&1&2&0&0&0&0\\
1&0&0&0&0&1&0\\
0&1&0&0&0&1&0\\
0&0&0&1&1&0&0\\
1&0&0&0&0&0&0
\end{pmatrix}.
\]
One computes
\[
f_{\mathrm{mix}}(x)=\bigl(x^3-2x^2+3x+m-26\bigr)p_m(x)+R_{\mathrm{mix}}(x),
\]
where
\begin{align*}
R_{\mathrm{mix}}(x)
&=\Bigl(\frac{13m}{2}-56\Bigr)x^3+(m^2-28m+76)x^2 \\
&\quad +(m^2-\frac{73m}{2}+145)x-\frac{m^2}{2}+16m-54.
\end{align*}
The required sign and monotonicity verifications are carried out in Appendix~\ref{app:signchecks}. In particular, the appendix proves that $x^3-2x^2+3x+m-26\ge 0$ and $R_{\mathrm{mix}}(x)>0$ for all $x\ge \Lm$ when $m\ge 18$, so Lemma~\ref{lem:root-template} applies.
\end{proof}

\begin{corollary}\label{cor:ew1done}
If $G[\Ap]\cong K_4$ and $e(W)=1$, then $\rho(G)<\rhoext(m)$ for every even $m\ge 18$.
\end{corollary}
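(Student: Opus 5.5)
This corollary is an assembly of results already established, so I will argue by exhaustion over the classification. Assume $G[\Ap]\cong K_4$, $e(W)=1$ and $m\ge18$ even. By Proposition~\ref{prop:ew1-class}, $G$ is isomorphic to a member of exactly one of the three families: same-$\Az$, distinct-$\Az$, or mixed. For each family I will check that the stated partition is equitable for the graph as classified. In each case the remaining vertices of $\Az$ are leaves at $u^*$, and the $W$-endpoints and special vertices have the prescribed adjacencies. I also check that the quotient matrix is irreducible. These two facts give that $\rho(G)$ equals the largest real root of the corresponding quotient characteristic polynomial.

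I then invoke Propositions~\ref{prop:sameA0}, \ref{prop:distinctA0} and~\ref{prop:mixed} in turn. Each gives that this Perron root is strictly less than $\rhoext(m)$ for every even $m\ge18$. That yields $\rho(G)<\rhoext(m)$ in all cases.

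The only point needing care is that the families are nonempty exactly in the admissible range of $m$. The leaf counts $m-14$ and $m-15$ in the quotient matrices must be nonnegative. When a count is zero the corresponding class is empty, and I drop that row and column. Since $m$ is even, $m-15\ge 3$ for $m\ge18$, and $m-14\ge4$, so no degeneration occurs and the quotients are valid as written.

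The main obstacle is therefore not in the corollary itself. It lies in the appendix sign checks that the three propositions rely on: positivity of each remainder $R$ on $[\Lm,\infty)$, and, in the mixed case, nonnegativity of the quotient factor $x^3-2x^2+3x+m-26$. These I take as given from the earlier propositions.
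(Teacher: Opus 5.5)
Your proposal is correct and matches how the paper obtains this corollary. The paper gives no separate proof: Proposition~\ref{prop:ew1-class} reduces the branch to the same-$\Az$, distinct-$\Az$ and mixed families, and Propositions~\ref{prop:sameA0}, \ref{prop:distinctA0} and~\ref{prop:mixed} then place each quotient Perron root strictly below $\rhoext(m)$, via Lemma~\ref{lem:root-template} and the appendix sign checks on $[\Lm,\infty)$.
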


\section{The bottleneck: $e(W)=2$}\label{sec:ew2}

Set
\[
\tau=e(\Ap,W).
\]
The case $e(W)=2$ is the bottleneck in the sharp threshold proof. By the interface independence principle, $\tau\le2$, so we split into $\tau=2,1,0$. The first subcase is uniform in $m$, while the latter two reduce to finite endpoint comparisons, as summarized in Table~\ref{tab:ew2-methods}.
\begin{table}[htbp]
\centering
\small
\caption{Methods used in the $e(W)=2$ bottleneck branch according to $\tau=e(\Ap,W)$.}
\label{tab:ew2-methods}
\begin{tabular}{@{}c c p{0.43\textwidth}@{}}
\toprule
$\tau=e(\Ap,W)$ & remaining values & method \\
\midrule
$2$ & all even $m\ge18$ & structural classification and quotient comparison \\
$1$ & $m=18,20$ & finite shifted-polynomial check \\
$0$ & $m=18$ & finite shifted-polynomial check \\
\bottomrule
\end{tabular}
\end{table}
We begin with the crude inequality that explains where the old threshold came from.

\begin{proposition}\label{prop:ew2}
Assume $G[\Ap]\cong K_4$ and $e(W)=2$. Then
\[
\rho(G)^2-\frac32\rho(G)<18.
\]
Consequently $\rho(G)<\rhoext(m)$ for every even $m\ge 24$.
\end{proposition}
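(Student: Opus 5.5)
We apply \eqref{eq:zz7} with the $K_4$ data $e(\Ap)=6$ and $|\Ap|=4$. In this case the coefficient of $\rho(G)$ is
\[
e(\Ap)-|\Ap|+\tfrac32-e(W)=6-4+\tfrac32-2=\tfrac32 ,
\]
and the right-hand side is $2\cdot 6+4+e(\Ap,W)=16+\tau$. So \eqref{eq:zz7} reads
\[
\rho(G)^2-\tfrac32\rho(G)<16+\tau .
\]
It remains to bound $\tau$.

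By Corollary~\ref{cor:ewalpha}, $\tau\le\alpha(G[W])$. Lemma~\ref{lem:local-basic} shows that $G[W]$ has no isolated vertices. A graph with exactly two edges and no isolated vertices is either $2K_2$ or $P_3$, and in both cases its independence number is $2$. Hence $\tau\le 2$, which gives $\rho(G)^2-\frac32\rho(G)<18$. This is the first claim.

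For the consequence, we compare with \eqref{eq:lower32}. Since $\rho(G)\ge\rhoext(m)>\Lm$ and $x\mapsto x^2-\frac32x$ is increasing for $x>\frac34$, we have
\[
\rho(G)^2-\tfrac32\rho(G)>m-\tfrac14\sqrt{4m-5}-\tfrac74 .
\]
At $m=24$ the right-hand side is $24-\frac14\sqrt{91}-1.75\approx 19.86>18$. The right-hand side is increasing in $m$, because its derivative is $1-\frac{1}{2\sqrt{4m-5}}>0$. So for every even $m\ge24$ the two inequalities contradict each other.

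To state the conclusion correctly: within the standing setup, $G$ is extremal, so $\rho(G)\ge\rhoext(m)$. The contradiction therefore shows that no extremal graph lies in this branch. Equivalently, any graph in the branch satisfies $\rho(G)<\rhoext(m)$, because assuming $\rho(G)\ge\rhoext(m)$ yields the contradiction above. The only delicate point is checking $\alpha(G[W])\le2$, which uses that $G[W]$ has no isolated vertices. Otherwise $W$ could contain extra isolated vertices and $\alpha$ would be larger.
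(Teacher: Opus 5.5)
Your proposal is correct and is essentially the paper's proof: substitute $e(\Ap)=6$, $|\Ap|=4$, $e(W)=2$ into \eqref{eq:zz7}, bound $e(\Ap,W)\le\alpha(G[W])=2$ using Corollary~\ref{cor:ewalpha} and $G[W]\in\{P_3,2K_2\}$, and compare with \eqref{eq:lower32}; your explicit evaluation at $m=24$ and the monotonicity in $m$ make precise a step the paper leaves implicit. One small caution: the ``equivalently, any graph in the branch'' phrasing overreaches slightly, because excluding isolated vertices of $G[W]$ goes through Lemma~\ref{lem:W-degree}, which uses the extremality of $G$, so the conclusion is really about the extremal $G$ of the standing setup, exactly as in the paper.
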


\begin{proof}
A graph with two edges and no isolated vertices is either $P_3$ or $2K_2$, so in either case
\[
\alpha(G[W])=2.
\]
By Corollary~\ref{cor:ewalpha}, this implies $e(\Ap,W)\le 2$.

Since $G[\Ap]\cong K_4$, we have $e(\Ap)=6$ and $|\Ap|=4$. Substituting into \eqref{eq:zz7} gives
\[
\rho(G)^2-\rho(G)\Bigl(\frac72-e(W)\Bigr)<16+e(\Ap,W).
\]
Now $e(W)=2$ and $e(\Ap,W)\le 2$, so
\[
\rho(G)^2-\frac32\rho(G)<18.
\]
On the other hand, by \eqref{eq:lower32},
\[
\rho(G)^2-\frac32\rho(G)>m-\frac14\sqrt{4m-5}-\frac74.
\]
For even $m\ge 24$, the right-hand side exceeds $18$. Hence the displayed upper bound is impossible.
\end{proof}

We now settle the extremal subcase $\tau=2$ uniformly for all even $m\ge 18$.

\begin{proposition}\label{prop:ew2-t2-p3}
Assume $G[\Ap]\cong K_4$, $e(W)=2$, $G[W]\cong P_3$, and $e(\Ap,W)=2$. Then
\[
\rho(G)<\rhoext(m)
\]
for every even $m\ge 18$.
\end{proposition}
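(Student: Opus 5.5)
Because $e(\Ap,W)=2$, Corollary~\ref{cor:ewalpha} says the set $U$ has exactly two elements, and by Proposition~\ref{prop:interface-independence} it is independent in $G[W]$. I would use this to pin down the graph up to a bounded number of explicit families, and then run the quotient-polynomial comparison of Lemma~\ref{lem:root-template}, as in Section~\ref{sec:ew1}.

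\emph{Step 1: structure.} Write $G[W]=w_1w_2w_3$ with center $w_2$. The only independent $2$-set in $P_3$ is $\{w_1,w_3\}$, so $U=\{w_1,w_3\}$. Let $a_1,a_3\in\Ap$ be the neighbors of $w_1,w_3$.
\begin{itemize}
\item If $a_1=a_3=a$, then $a w_1 w_2 w_3 a$ is a $4$-cycle. Together with a triangle $abc$ inside $\Ap\cong K_4$ it shares exactly $a$, giving a fish. Hence $a_1\neq a_3$.
\item By Lemma~\ref{lem:local-basic}(3), neither $w_1$ nor $w_3$ meets $\Az$.
\item Since $w_2\notin U$, Lemma~\ref{lem:local-basic} allows $w_2$ at most one neighbor $z\in\Az$.
\end{itemize}
This leaves two candidate families.
\begin{itemize}
\item Family (i): $w_2$ has no neighbor in $N$.
\item Family (ii): $w_2z\in E(G)$ for a unique $z\in\Az$.
\end{itemize}
In both families all other vertices of $\Az$ are leaves of $u^*$. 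I would check directly that both families are $H(4,3)$-free, so that neither is excluded structurally. The triangles are those inside $\{u^*\}\cup\Ap$. The cycles through $W$ are the $5$-cycle $a_1w_1w_2w_3a_3$, cycles of length at least $6$ through $u^*$, and in family (ii) cycles of length at least $5$ through $z$; none of these is a $4$-cycle. The edge count \eqref{eq:mcount} then fixes the number of leaves: $m-16$ leaves in family (i) and $m-18$ in family (ii). Family (ii) therefore needs $m\ge 18$, and at $m=18$ it has no leaves at all.

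\emph{Step 2: quotients.} In each family I would take the equitable partition
\[
\{u^*\},\quad \{a_1,a_3\},\quad \Ap\setminus\{a_1,a_3\},\quad \{w_1,w_3\},\quad \{w_2\},
\]
together with $\{z\}$ in family (ii) and the set of leaves. I would compute the characteristic polynomial $f(x)$ of the resulting $6\times 6$ or $7\times 7$ quotient matrix, which has degree $6$ or $7$. Then I would divide it as $f=q_m p_m+R_m$.

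\emph{Step 3: comparison.} Following the pattern of Propositions~\ref{prop:sameA0}--\ref{prop:mixed}, I would verify two sign conditions for all $x\ge \Lm$ and all even $m\ge 18$:
\begin{itemize}
\item the quotient $q_m(x)\ge 0$, which should be a monic polynomial such as $x^2-cx+(m-\text{const})$, or a cubic;
\item the remainder $R_m(x)>0$.
\end{itemize}
For each polynomial I would substitute $x=\Lm+s$ with $s\ge 0$ and show the coefficients in $s$ are nonnegative, or show positivity at $\Lm$ together with monotonicity on $[\Lm,\infty)$. Lemma~\ref{lem:root-template}, combined with $\rhoext(m)>\Lm$ from Lemma~\ref{lem:Lm-lower}, then gives $\rho(G)<\rhoext(m)$.

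\emph{Expected obstacle.} The hard part is the uniform positivity of $R_m$ down to $m=18$. The comparison is tight there, and $\Lm$ may sit too far below $\rhoext(m)$ for a uniform sign argument to succeed. If the uniform argument fails for small $m$, my first fallback is to replace $\Lm$ by a sharper rational lower bound $r_m<\rhoext(m)$ for each remaining $m$ in $\{18,20,22\}$. I would then check $p_m(r_m)<0$ and $f(r_m)>0$ with $f$ increasing beyond $r_m$, exactly as in Proposition~\ref{prop:ew0}. For $m\ge 24$, Proposition~\ref{prop:ew2} already settles the case, so only these finitely many endpoint values ever need the explicit check.
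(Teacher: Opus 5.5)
\textbf{Edge count.} Your structural classification matches the paper's: $U=\{w_1,w_3\}$, distinct attachment vertices $a_1\neq a_3$, the center $w_2$ with zero or one neighbor in $\Az$, and the same equitable partition. The edge count, however, is wrong, and the error would carry into every polynomial you compute. In \eqref{eq:mcount} the term $e(N,W)$ already contains the two $\Ap$--$W$ edges, and you appear to have counted them a second time.
\begin{itemize}
\item Family (i): $m=(4+|\Az|)+6+2+2$, so there are $|\Az|=m-14$ leaves, not $m-16$.
\item Family (ii): $m=(4+|\Az|)+6+3+2$, so $|\Az|=m-15$, i.e.\ $m-16$ leaves besides $z$, not $m-18$. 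In particular, family (ii) has two leaves at $m=18$, not none.
\end{itemize}
With your counts, the quotient matrices describe graphs with $m-2$ edges. These are proper subgraphs of the actual $G$, so showing that their Perron roots lie below $\rhoext(m)$ says nothing about $\rho(G)$. Worse, the check would not alert you, because it ``succeeds'' on the smaller graph. Separately, verifying that the families are $H(4,3)$-free is unnecessary, since only an upper bound on $\rho(G)$ is needed.

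\textbf{Step 3 for family (i).} Even with the correct polynomial, your main route in Step 3 fails for family (i). Division gives $f_0=(x^2-2x+4)p_m+R_0$ with
\[
R_0(x)=(m-29)x^3+\Bigl(\tfrac{15m}{2}-63\Bigr)x^2+68x+88-8m.
\]
Its leading coefficient is negative for every $m\le 28$, so condition (ii) of Lemma~\ref{lem:root-template} fails on every half-line. A sharper lower bound $r_m$ does not help, because the obstruction is at $x\to\infty$, not near $\Lm$.

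Your fallback does rescue the argument once the leaf counts are fixed, since the true Perron roots lie below $\Lm$. That fallback tests $f$ itself at a rational $r_m<\rhoext(m)$ together with monotonicity. It is needed only for $m\in\{18,20,22\}$, because Proposition~\ref{prop:ew2} covers $m\ge 24$. So it is this direct evaluation, not the decomposition lemma, that carries family (i).

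\textbf{The paper's route.} The paper skips the division entirely. It uses $m\le x^2-x+\tfrac32$ for $x\ge\Lm$ to show $f_i'''>0$ on $[\Lm,\infty)$, and then checks positivity of $f_i''$, $f_i'$ and $f_i$ at $\Lm$. This gives $f_i>0$ on $[\Lm,\infty)$ uniformly for all even $m\ge 18$, with no finite case split.
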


\begin{proof}
Write
\[
G[W]=x-y-z.
\]
Let
\[
U=\{w\in W:d_{\Ap}(w)=1\}.
\]
Since $e(\Ap,W)=2$, one has $|U|=2$. By Proposition~\ref{prop:interface-independence}, $U$ is independent in $G[W]$, so
\[
U=\{x,z\}.
\]
Thus $x$ and $z$ each have a unique neighbor in $\Ap$, while $y$ has no neighbor in $\Ap$.

Let the unique neighbors of $x$ and $z$ in $\Ap$ be $a$ and $b$. Then $a\ne b$: if $a=b$, then
\[
a-x-y-z-a
\]
is a $4$-cycle, while $u^*ac u^*$ is a triangle for any $c\in\Ap\setminus\{a\}$, giving a copy of $H(4,3)$.

By Lemma~\ref{lem:local-basic}, neither $x$ nor $z$ has a neighbor in $\Az$. The middle vertex $y$ already has degree $2$ inside $W$, so by Lemma~\ref{lem:local-basic} it has either no neighbor in $\Az$ or exactly one. Hence there are exactly two families.

In the first family, denoted $\mathcal P_m^{(0)}$, the vertex $y$ has no neighbor in $\Az$. Then all vertices of $\Az$ are ordinary leaves adjacent only to $u^*$, and the edge count gives
\[
|\Az|=m-14.
\]
With the equitable partition
\[
\{u^*\},\quad A_1=\{a,b\},\quad A_2=\Ap\setminus A_1,\quad \{x,z\},\quad \{y\},\quad \Az,
\]
the quotient matrix has characteristic polynomial
\begin{align*}
 f_0(x)&=x^6-2x^5+(4-m)x^4+(2m-27)x^3 \\
&\quad +(6m-68)x^2+(78-5m)x+84-6m.
\end{align*}

In the second family, denoted $\mathcal P_m^{(1)}$, the vertex $y$ has one neighbor $s\in\Az$. The remaining $m-16$ vertices of $\Az$ are ordinary leaves adjacent only to $u^*$, and the characteristic polynomial of the natural equitable quotient is
\begin{align*}
 f_1(x)&=x^7-2x^6+(4-m)x^5+(2m-27)x^4+(7m-82)x^3 \\
&\quad +(114-7m)x^2+(148-10m)x+m-16.
\end{align*}

In both cases the displayed polynomial is the characteristic polynomial of an irreducible equitable quotient, so its Perron root is the spectral radius of the corresponding graph. The sign verification in Appendix~\ref{app:ew2-t2-p3} shows that $f_i(x)>0$ for every $x\ge \Lm$ and $i\in\{0,1\}$. Thus the Perron root of each of these two quotient matrices is less than $\Lm$, and hence less than $\rhoext(m)$ by Lemma~\ref{lem:Lm-lower}. This proves the proposition.
\end{proof}

\begin{proposition}\label{prop:ew2-t2-2k2}
Assume $G[\Ap]\cong K_4$, $e(W)=2$, $G[W]\cong 2K_2$, and $e(\Ap,W)=2$. Then
\[
\rho(G)<\rhoext(m)
\]
for every even $m\ge 18$.
\end{proposition}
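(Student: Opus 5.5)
Following the proof of Proposition~\ref{prop:ew2-t2-p3}, we first pin down the structure and then compare explicit quotient polynomials with $\rhoext(m)$.

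\emph{Structure.} Write $G[W]=\{x_1y_1,\,x_2y_2\}$. Since $e(\Ap,W)=2$, we have $|U|=2$. By Proposition~\ref{prop:interface-independence}, $U$ is independent, so $U$ contains exactly one endpoint of each edge; say $U=\{x_1,x_2\}$. Let $a_i$ be the unique $\Ap$-neighbor of $x_i$. By Lemma~\ref{lem:local-basic}, neither $x_i$ has a neighbor in $\Az$. The vertices $y_1,y_2$ have degree $1$ inside $W$ and no $\Ap$-neighbor, so by Lemma~\ref{lem:W-degree} and Lemma~\ref{lem:local-basic} each has exactly one neighbor $s_i\in\Az$. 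This leaves four cases:
\begin{itemize}
\item[(i)] $a_1=a_2$ and $s_1=s_2$;
\item[(ii)] $a_1=a_2$ and $s_1\ne s_2$;
\item[(iii)] $a_1\ne a_2$ and $s_1=s_2$;
\item[(iv)] $a_1\ne a_2$ and $s_1\ne s_2$.
\end{itemize}
Each case must be checked for $H(4,3)$-freeness. In case (iii), $a_1x_1y_1s\,y_2x_2a_2$ produces only longer cycles. In case (i), with $a_1=a_2=a$, the $6$-cycle $a x_1 y_1 s\, y_2 x_2$ together with $u^*$ and $s$ creates the $4$-cycles $u^*\,s\,y_1\,x_1$? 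No: $x_1$ is not adjacent to $u^*$, so no $4$-cycle arises there. We will nonetheless verify carefully in every case that no triangle shares exactly one vertex with a $4$-cycle, discarding any case that fails. The edge count gives $|\Az|=m-16$ when $s_1=s_2$ and $|\Az|=m-17$ when $s_1\ne s_2$. Parity then kills one of these: $|N|+e(\Ap)+e(N,W)+e(W)=m$ with $e(N,W)=4$ and $e(W)=2$ gives $|N|=m-12$, hence $|\Az|=m-16$ in every case. Consequently the distinct-$s$ cases have $m-18$ ordinary leaves, and the same-$s$ cases have $m-17$ vertices in $\Az$ including $s$. We will recompute this count carefully.

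\emph{Comparison.} For each surviving family, we will write the natural equitable partition: $\{u^*\}$, the $\Ap$-vertices hit by $U$, the remaining $\Ap$-vertices, $\{x_1,x_2\}$, $\{y_1,y_2\}$, the special $\Az$-vertices, and the ordinary leaves. We then compute the characteristic polynomial $f(x)$ of the quotient, which has degree at most $7$. Exactly as in Proposition~\ref{prop:ew2-t2-p3}, the goal is to prove $f(x)>0$ for all $x\ge\Lm$, either directly or via Lemma~\ref{lem:root-template} after dividing by $p_m$. To do this, substitute $x=\Lm+t$ with $t\ge0$, or bound the coefficients using $\Lm^2=\Lm+m-\tfrac32$. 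Then show that the resulting polynomial in $t$ has nonnegative coefficients for even $m\ge18$, and handle small $m$ by exact rational evaluation. Because each $f$ has degree at least $5$ here, a shifted-polynomial check will be recorded in the appendix alongside the others.

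\emph{Main obstacle.} The delicate step is the endpoint $m=18$ in case (iv). This is the configuration whose spectral radius comes closest to $\rhoext(m)$, since it has two pendant-like paths hanging off distinct $\Ap$-vertices. Here the margin is small, so $\Lm$ may not be a good enough lower bound for $\rhoext(m)$. If the sign check at $\Lm$ fails, we would instead insert a rational $r$ with $p_{18}(r)<0$, which gives $\rhoext(18)>r$, and then show $f(x)>0$ with $f$ increasing on $[r,\infty)$. This is the same technique used in Proposition~\ref{prop:ew0}.
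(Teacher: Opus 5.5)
Your structural reduction is the paper's: interface independence forces $U=\{x_1,x_2\}$, each $y_i$ has exactly one $\Az$-neighbour, and the two binary choices give the paper's four families $\mathcal F_m^{ss},\mathcal F_m^{sd},\mathcal F_m^{ds},\mathcal F_m^{dd}$ with seven-cell equitable quotients. For the comparison, the paper takes your second option. It writes $f_*=q_*p_m+R_*$ and shows $q_*>0$ and $R_*''>0$ on $[\Lm,\infty)$. It then proves $R_*'(\Lm)>0$ and $R_*(\Lm)>0$ for every $m\ge 18$, by showing these expressions increase with $m$ and are positive at $m=18$, and applies Lemma~\ref{lem:root-template}. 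Your direct route is equally viable here and gives the stronger bound $\rho(G)<\Lm$; it is how the paper treats the $P_3$ families in Proposition~\ref{prop:ew2-t2-p3}. The reason it works: the Perron roots really are below $\Lm$, and every eigenvalue of a nonnegative matrix has real part at most its Perron root. Hence $f_*(\Lm+t)$ factors into linear and quadratic factors in $t$ with positive coefficients, so for each fixed $m$ the shifted-coefficient check is bound to succeed.

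As written, though, the proposal contains a slip and leaves the real work undone.

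\emph{The edge count.} From $m=|N|+6+4+2$ you get $|\Az|=m-16$ in all four cases, simply because $e(\Az,W)=2$ whether or not $s_1=s_2$. No parity argument is involved. Your final sentence on the count is off by one: the same-$s$ families have $|\Az|=m-16$, that is, $m-17$ ordinary leaves, and the distinct-$s$ families have $m-18$. A quotient built with $m-17$ vertices in $\Az$ for the same-$s$ cases would give the wrong polynomial.

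\emph{The obstacle.} No family here is close to the threshold. At $m=18$ the four Perron roots are all about $4.22$--$4.23$, and case (iv) is in fact the lowest. Compare $\Lm\approx 4.593$ and $\rhoext(18)\approx 4.594$. For large $m$ the families satisfy $\rho^2=m-12+O(m^{-1/2})$, while $\Lm^2=m-1+\tfrac12\sqrt{4m-5}$. So no rational insertion is needed. Conversely, a patch for $m=18$ alone could not rescue a family lying between $\Lm$ and $\rhoext(m)$ at larger $m$, where the gap is only $O(m^{-3/2})$; that situation is what the $p_m$-division is for.

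\emph{The missing step.} What is genuinely missing is uniformity in $m$. After reducing with $\Lm^2=\Lm+m-\tfrac32$, each coefficient of $f_*(\Lm+t)$ (or each of $q_*(\Lm)$, $R_*'(\Lm)$, $R_*(\Lm)$) has the form $A(m)+B(m)\sqrt{4m-5}$. You must prove each is positive for every even $m\ge 18$, for instance by monotonicity in $m$ starting from $m=18$, as the appendix does.

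\emph{A minor point.} The $H(4,3)$-freeness check is not needed for an upper bound. In any case all four families are admissible, since every cycle through $W$ has length at least $5$.
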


\begin{proof}
Write
\[
G[W]=x_1y_1\cup x_2y_2.
\]
Since $e(\Ap,W)=2$, the set $U=\{w\in W:d_{\Ap}(w)=1\}$ has size $2$. By Proposition~\ref{prop:interface-independence}, it is independent in $G[W]$, so after relabeling
\[
U=\{x_1,x_2\}.
\]
Thus $x_1,x_2$ are the two vertices meeting $\Ap$, while $y_1,y_2$ have no neighbors in $\Ap$.

Each $y_i$ has degree $1$ inside $W$. By Lemma~\ref{lem:local-basic} and Lemma~\ref{lem:W-degree}, each $y_i$ has exactly one neighbor in $\Az$. Hence
\[
e(\Az,W)=2.
\]
The edge count gives
\[
m=(4+|\Az|)+6+(2+2)+2,
\]
so
\[
|\Az|=m-16.
\]

There are two independent binary choices. The vertices $x_1,x_2$ either meet the same vertex of $\Ap$ or distinct vertices of $\Ap$; similarly, $y_1,y_2$ either meet the same vertex of $\Az$ or distinct vertices of $\Az$. Thus every graph in this subcase belongs to one of four families, denoted
\[
\mathcal F_m^{ss},\quad \mathcal F_m^{sd},\quad \mathcal F_m^{ds},\quad \mathcal F_m^{dd},
\]
where the first superscript records whether the two $\Ap$-neighbors are the same or distinct, and the second records whether the two $\Az$-neighbors are the same or distinct.

The natural equitable quotient matrices for these four families have characteristic polynomials
\begin{align*}
 f_{ss}(x)&=x^7-2x^6+(4-m)x^5+(2m-26)x^4+(8m-99)x^3 \\
&\quad +(164-10m)x^2+(202-13m)x+8m-136,\\[2mm]
 f_{sd}(x)&=x^7-2x^6+(5-m)x^5+(2m-28)x^4+(7m-92)x^3 \\
&\quad +(132-8m)x^2+(128-8m)x+4m-72,\\[2mm]
 f_{ds}(x)&=x^7-2x^6+(5-m)x^5+(2m-29)x^4+(7m-89)x^3 \\
&\quad +(122-7m)x^2+(176-11m)x+2m-34,\\[2mm]
 f_{dd}(x)&=x^7-2x^6+(6-m)x^5+(2m-31)x^4+(6m-81)x^3 \\
&\quad +(87-5m)x^2+(114-7m)x+(m-18).
\end{align*}
In each of the four cases, the displayed polynomial is the characteristic polynomial of an irreducible equitable quotient, so its Perron root is the spectral radius of the corresponding graph. The decomposition and positivity checks in Appendix~\ref{app:ew2-t2-2k2} show that each of these four Perron roots is strictly less than $\rhoext(m)$ for every even $m\ge 18$. This completes the proof.
\end{proof}

\begin{corollary}\label{cor:ew2-t2done}
Assume $G[\Ap]\cong K_4$, $e(W)=2$, and $e(\Ap,W)=2$. Then
\[
\rho(G)<\rhoext(m)
\]
for every even $m\ge 18$.
\end{corollary}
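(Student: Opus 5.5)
This corollary is a direct combination of the two preceding propositions, so the plan is a short case split on the isomorphism type of $G[W]$.

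First, since $e(W)=2$ and every vertex of $W$ has a neighbor in $W$ (Lemma~\ref{lem:local-basic}), the graph $G[W]$ has no isolated vertices and exactly two edges. The only such graphs are $P_3$ and $2K_2$, as already used in the proof of Proposition~\ref{prop:ew2}.

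Second, I would dispatch the two cases separately under the standing hypotheses $G[\Ap]\cong K_4$, $e(W)=2$ and $e(\Ap,W)=2$:
\begin{itemize}
\item If $G[W]\cong P_3$, Proposition~\ref{prop:ew2-t2-p3} gives $\rho(G)<\rhoext(m)$ for every even $m\ge18$.
\item If $G[W]\cong 2K_2$, Proposition~\ref{prop:ew2-t2-2k2} gives the same conclusion.
\end{itemize}
Since these cases are exhaustive, the corollary follows.

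All of the genuine work lies inside the two cited propositions: the structural classifications and the appendix sign checks that $f_0,f_1$ and $f_{ss},f_{sd},f_{ds},f_{dd}$ have no root at or above the relevant bound. For the corollary itself, the only point to check is that the dichotomy $P_3$ versus $2K_2$ really is exhaustive. That uses exactly the absence of isolated vertices in $G[W]$, which Lemma~\ref{lem:local-basic} supplies in the $K_4$-branch.
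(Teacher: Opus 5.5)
Your proposal is correct and matches the paper's own proof: Lemma~\ref{lem:local-basic} rules out isolated vertices in $G[W]$, which together with $e(W)=2$ forces $G[W]\cong P_3$ or $G[W]\cong 2K_2$. The two cases are then settled by Propositions~\ref{prop:ew2-t2-p3} and~\ref{prop:ew2-t2-2k2}, respectively.
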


\begin{proof}
Since every vertex of $W$ has a neighbor in $W$ and $e(W)=2$, one has $G[W]\cong P_3$ or $G[W]\cong 2K_2$. The conclusion follows from Propositions~\ref{prop:ew2-t2-p3} and~\ref{prop:ew2-t2-2k2}.
\end{proof}

We next settle the subcase $\tau=1$. The crude inequality already disposes of this subcase for all even $m\neq 18,20$, so only two finite values require a separate comparison.
\begin{proposition}\label{prop:ew2-t1}
Assume $G[\Ap]\cong K_4$, $e(W)=2$, and $e(\Ap,W)=1$. Then
\[
\rho(G)<\rhoext(m)
\]
for every even $m\ge 18$.
\end{proposition}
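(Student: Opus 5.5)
First I dispose of all large $m$ with the crude Perron inequality. With $e(W)=2$ and $e(\Ap,W)=1$, inequality \eqref{eq:zz7} with $e(\Ap)=6$ and $|\Ap|=4$ gives
\[
\rho(G)^2-\frac32\rho(G)<16+1=17 .
\]
By \eqref{eq:lower32}, $\rho(G)^2-\tfrac32\rho(G)>m-\tfrac14\sqrt{4m-5}-\tfrac74$. For $m=22$ the right-hand side is $22-\tfrac14\sqrt{83}-\tfrac74\approx 17.97>17$, and it is increasing in $m$. So the only values left are $m=18$ and $m=20$, as Table~\ref{tab:ew2-methods} indicates.

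For these two values I classify the graphs explicitly. By Corollary~\ref{cor:ewalpha}, exactly one vertex $x\in W$ has a neighbor $a\in\Ap$, and every other vertex of $W$ has no $\Ap$-neighbor. The graph $G[W]$ is $P_3$ or $2K_2$. By Lemma~\ref{lem:local-basic}, $x$ has no $\Az$-neighbor, and every other vertex of $W$ has at most one $\Az$-neighbor. By Lemma~\ref{lem:W-degree}, any vertex of $W$ with $d_W=1$ that is not $x$ must have exactly one $\Az$-neighbor.

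\emph{Case $G[W]\cong 2K_2$.} Here $x$ has $d_W(x)=1$, which is allowed since its $\Ap$-neighbor gives degree $2$. The other three vertices of $W$ each take exactly one $\Az$-neighbor. We must also forbid fish through shared $\Az$-vertices: two $W$-vertices sharing $z\in\Az$ together with $u^*$ may create a $4$-cycle meeting a triangle $u^*bc$ in exactly $u^*$. For example, if $y_1,y_2$ lie on different $W$-edges and share $z$, then $u^*zy_1\dots$ is not a $4$-cycle, so this is allowed. Coincidence patterns in which a triangle or $4$-cycle would meet a $K_4$-triangle in exactly one vertex are pruned.

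\emph{Case $G[W]\cong P_3=x'-y-z'$.} If $x$ is the middle vertex $y$, then the two ends need $\Az$-neighbors. If $x$ is an end, then the other end needs an $\Az$-neighbor, and the middle vertex may optionally have one.

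In every family the remaining vertices of $\Az$ are ordinary leaves, and their number is fixed by \eqref{eq:mcount}. This produces a short list of concrete graphs for $m=18$ and $m=20$.

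For each graph in the list I write the natural equitable quotient matrix $Q$ and compute $f(x)=\det(xI-Q)$ at $m=18$ and $m=20$. I then compare with $\rhoext(m)$ by the rational-sandwich device used in Proposition~\ref{prop:ew0}. I pick a rational $r$ slightly below $\rhoext(m)$ with $p_m(r)<0$, so that $r<\rhoext(m)$. I then check that $f(r)>0$ and that $f$ is increasing on $[r,\infty)$, for instance by checking that $f'$ has positive coefficients after the shift $x\mapsto x+r$. Equivalently, I verify that the shifted polynomial $f(x+r)$ has all coefficients positive. Together these give $\rho(G)<r<\rhoext(m)$. These are the ``finite shifted-polynomial checks'' that would go in the appendix.

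The main obstacle is making the classification exhaustive without missing admissible fish-free configurations, especially the coincidences of $\Az$-neighbors. The second difficulty is that at $m=18$ the gap $\rhoext(m)-\rho(G)$ may be very small, as it is for $T_{16}$. That would force a finely tuned rational $r$, which I would find numerically before certifying it exactly.
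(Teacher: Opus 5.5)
Your proposal is essentially the paper's proof: \eqref{eq:zz7} gives $\rho^2-\frac32\rho<17$, which \eqref{eq:lower32} rules out for even $m\ge 22$. For $m=18,20$ the paper carries out exactly your classification (five $P_3$ families and four $2K_2$ families, with the all-distinct $2K_2$ pattern existing only at $m=20$ because $|\Az|=m-16$), and it certifies each quotient polynomial by positive shifted coefficients at $9/2$ and $24/5$, where $p_{18}(9/2)<0$ and $p_{20}(24/5)<0$. Note that in fact no coincidence pattern needs pruning: each vertex of $W$ has at most one neighbor in $N$, so a shared $\Az$-vertex never produces a $4$-cycle through $u^*$, and listing every pattern, as the paper does, is the safe route.
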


\begin{proof}
Substituting $e(\Ap)=6$, $|\Ap|=4$, $e(W)=2$, and $e(\Ap,W)=1$ into \eqref{eq:zz7} gives
\[
\rho(G)^2-\frac32\rho(G)<17.
\]
For every even $m\ge 22$, this contradicts \eqref{eq:lower32}, since
\[
m-\frac14\sqrt{4m-5}-\frac74>17.
\]
Thus it remains to handle $m=18$ and $m=20$.

We first classify the possible graphs. Since every vertex of $W$ has a neighbor in $W$ and $e(W)=2$, one has
\[
G[W]\cong P_3\quad\text{or}\quad G[W]\cong 2K_2.
\]
Let $U=\{w\in W:d_{\Ap}(w)=1\}$. Since $e(\Ap,W)=1$, the set $U$ has size $1$.

First suppose $G[W]\cong P_3$, and write
\[
G[W]=x-y-z.
\]
If the middle vertex $y$ is the unique vertex of $W$ meeting $\Ap$, then $x$ and $z$ each have degree $1$ inside $W$ and hence must each meet $\Az$. Their $\Az$-neighbors are either the same or distinct. This gives two families, denoted
\[
\mathcal P^{M,s}_m,\qquad \mathcal P^{M,d}_m.
\]
If instead an endpoint, say $x$, is the unique vertex meeting $\Ap$, then the opposite endpoint $z$ must meet $\Az$, while the middle vertex $y$ may have no neighbor in $\Az$, may share $z$'s neighbor in $\Az$, or may meet a distinct vertex of $\Az$. This gives three more families, denoted
\[
\mathcal P^{E,0}_m,
\qquad
\mathcal P^{E,s}_m,
\qquad
\mathcal P^{E,d}_m.
\]
The characteristic polynomials of the natural irreducible equitable quotients for these five families are
\begin{align*}
f_{M,s}(x)&=x^8-2x^7+(3-m)x^6+(2m-24)x^5+(8m-89)x^4 \\
&\quad +(162-10m)x^3+(208-14m)x^2+(4m-64)x,\\[1mm]
f_{M,d}(x)&=x^7-2x^6+(4-m)x^5+(2m-26)x^4+(7m-82)x^3 \\
&\quad +(130-8m)x^2+(151-10m)x+(2m-34),\\[1mm]
f_{E,0}(x)&=x^8-2x^7+(3-m)x^6+(2m-24)x^5+(7m-72)x^4 \\
&\quad +(120-8m)x^3+(158-12m)x^2+(6m-94)x+(3m-45),\\[1mm]
f_{E,s}(x)&=x^8-2x^7+(3-m)x^6+(2m-26)x^5+(8m-83)x^4 \\
&\quad +(144-8m)x^3+(297-20m)x^2-10x+(7m-112),\\[1mm]
f_{E,d}(x)&=x^9-2x^8+(3-m)x^7+(2m-24)x^6+(8m-86)x^5 \\
&\quad +(156-10m)x^4+(242-17m)x^3+(10m-158)x^2 \\
&\quad +(7m-115)x+(34-2m).
\end{align*}

Now suppose $G[W]\cong 2K_2$, and write
\[
G[W]=x_1y_1\cup x_2y_2.
\]
After relabeling, assume $x_1$ is the unique vertex of $W$ meeting $\Ap$. Then $y_1,x_2,y_2$ each have degree $1$ inside $W$ and no neighbor in $\Ap$, so each must meet $\Az$. Up to isomorphism, there are four possibilities according to the partition of $\{y_1,x_2,y_2\}$ determined by their $\Az$-neighbors: all three meet the same vertex, a nonedge pair shares a vertex, the adjacent pair $x_2,y_2$ shares a vertex, or all three meet distinct vertices. We denote the corresponding families by
\[
\mathcal K^{[3]}_m,
\qquad
\mathcal K^{mix}_m,
\qquad
\mathcal K^{edge}_m,
\qquad
\mathcal K^{[1,1,1]}_m.
\]
The characteristic polynomials of the corresponding natural irreducible equitable quotients are
\begin{align*}
f_{[3]}(x)&=x^8-3x^7+(6-m)x^6+(3m-32)x^5+(6m-65)x^4 \\
&\quad +(249-16m)x^3+(159-11m)x^2+(17m-289)x+(4m-68),\\[1mm]
f_{mix}(x)&=x^{10}-2x^9+(3-m)x^8+(2m-24)x^7+(9m-99)x^6 \\
&\quad +(194-12m)x^5+(363-25m)x^4+(20m-342)x^3 \\
&\quad +(22m-370)x^2+(138-8m)x+(54-3m),\\[1mm]
f_{edge}(x)&=x^9-3x^8+(6-m)x^7+(3m-32)x^6+(6m-63)x^5 \\
&\quad +(245-16m)x^4+(179-13m)x^3+(21m-361)x^2 \\
&\quad +(12m-202)x+(72-4m),\\[1mm]
f_{[1,1,1]}(x)&=x^{11}-2x^{10}+(3-m)x^9+(2m-24)x^8+(9m-98)x^7 \\
&\quad +(192-12m)x^6+(372-26m)x^5+(22m-374)x^4 \\
&\quad +(27m-450)x^3+(206-12m)x^2+(129-7m)x+(2m-38).
\end{align*}
The family $\mathcal K^{[1,1,1]}_m$ exists only when there are at least three vertices in $\Az$, so in the present range it is relevant only for $m=20$.

At the two remaining values, we compare to simple lower bounds for $\rhoext(m)$. One computes
\[
p_{18}\!\left(\frac92\right)=-\frac{295}{16}<0,
\qquad
p_{20}\!\left(\frac{24}{5}\right)=-\frac{4599}{625}<0.
\]
Hence
\[
\rhoext(18)>\frac92,
\qquad
\rhoext(20)>\frac{24}{5}.
\]
By the shifted-coefficient check in Appendix~\ref{app:ew2-t1}, each of the displayed family polynomials is positive on $[9/2,\infty)$ when $m=18$ and on $[24/5,\infty)$ when $m=20$, whenever the corresponding family exists. Therefore the Perron root of every family in the $\tau=1$ branch lies strictly below $\rhoext(m)$ for $m=18,20$.
\end{proof}

\begin{corollary}\label{cor:ew2-t1done}
Assume $G[\Ap]\cong K_4$, $e(W)=2$, and $e(\Ap,W)=1$. Then $\rho(G)<\rhoext(m)$ for every even $m\ge 18$.
\end{corollary}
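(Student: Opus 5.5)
This corollary is a restatement of Proposition~\ref{prop:ew2-t1}, so the plan is to read it off from that proposition and check that its case split is exhaustive. First, the hypotheses $G[\Ap]\cong K_4$ and $e(W)=2$ make \eqref{eq:zz7} read
\[
\rho(G)^2-\tfrac32\rho(G)<16+e(\Ap,W)=17 .
\]
By \eqref{eq:lower32}, this is impossible for every even $m\ge 22$. So only $m=18$ and $m=20$ need separate treatment.

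For these two values I would use the classification already carried out in the proof of Proposition~\ref{prop:ew2-t1}. By Lemma~\ref{lem:local-basic}, every vertex of $W$ has a neighbor in $W$, so $G[W]\cong P_3$ or $G[W]\cong 2K_2$. The unique vertex of $U$ is either the middle or an end vertex of the $P_3$, or an endpoint of one of the two edges of the $2K_2$. The remaining vertices of $W$ must then meet $\Az$ (Lemmas~\ref{lem:local-basic} and~\ref{lem:W-degree}), and the $\Az$-neighbors are sorted according to which of them coincide. This yields the nine families $\mathcal P^{M,s}_m$, $\mathcal P^{M,d}_m$, $\mathcal P^{E,0}_m$, $\mathcal P^{E,s}_m$, $\mathcal P^{E,d}_m$, $\mathcal K^{[3]}_m$, $\mathcal K^{mix}_m$, $\mathcal K^{edge}_m$, $\mathcal K^{[1,1,1]}_m$. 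Each family has an irreducible equitable quotient whose characteristic polynomial was displayed there.

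The comparison then goes as follows. Since $p_{18}(9/2)<0$ and $p_{20}(24/5)<0$, we have $\rhoext(18)>9/2$ and $\rhoext(20)>24/5$. The shifted-coefficient check of Appendix~\ref{app:ew2-t1} expands each family polynomial in powers of $(x-9/2)$ or $(x-24/5)$ and finds all coefficients nonnegative with a positive constant term. This shows each polynomial is positive on $[\rhoext(m),\infty)$, so every Perron root lies strictly below $\rhoext(m)$.

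The only real obstacle is confirming that the enumeration of families is complete. In particular, the middle vertex of $P_3$ may or may not carry an $\Az$-neighbor, and $\mathcal K^{[1,1,1]}_m$ needs $|\Az|\ge 3$, which is relevant only at $m=20$. Once completeness is accepted, the corollary follows immediately from Proposition~\ref{prop:ew2-t1}.
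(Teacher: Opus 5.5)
Your proposal is correct and matches the paper: the corollary restates Proposition~\ref{prop:ew2-t1}, and your outline is exactly that proposition's proof. It uses the bound $\rho(G)^2-\frac32\rho(G)<17$ from \eqref{eq:zz7} to rule out $m\ge 22$, then the nine-family classification at $m=18,20$, checked against $\theta_{18}=9/2$ and $\theta_{20}=24/5$ via Appendix~\ref{app:ew2-t1}. One small wording point: ``the remaining vertices of $W$ must then meet $\Az$'' holds only for vertices of $W$-degree $1$, since the middle vertex of $P_3$ need not meet $\Az$ when the $U$-vertex is an endpoint, as you correctly note afterwards.
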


We finally settle the last subcase, namely $\tau=0$.

\begin{proposition}\label{prop:ew2-t0}
Assume $G[\Ap]\cong K_4$, $e(W)=2$, and $e(\Ap,W)=0$. Then
\[
\rho(G)<\rhoext(m)
\]
for every even $m\ge 18$.
\end{proposition}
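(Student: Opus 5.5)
First apply the crude inequality. With $e(\Ap)=6$, $|\Ap|=4$, $e(W)=2$ and $e(\Ap,W)=0$, inequality \eqref{eq:zz7} gives
\[
\rho(G)^2-\frac32\rho(G)<16 .
\]
By \eqref{eq:lower32}, the left-hand side exceeds $m-\frac14\sqrt{4m-5}-\frac74$. For $m=20$ this lower bound is $20-\frac{\sqrt{75}}4-\frac74\approx 16.09>16$, and it increases with $m$. So every even $m\ge 20$ is excluded, and only $m=18$ remains. This matches Table~\ref{tab:ew2-methods}.

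For $m=18$ we classify the graphs. Since $e(\Ap,W)=0$, no vertex of $W$ meets $\Ap$. By Lemma~\ref{lem:local-basic} each $w\in W$ has at most one neighbor in $\Az$, and by Lemma~\ref{lem:W-degree} it has degree at least $2$. Also $G[W]\cong P_3$ or $2K_2$.

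If $G[W]=x-y-z$, then $x$ and $z$ each need exactly one $\Az$-neighbor, and $y$ has either none or one. This gives families determined by the partition of the $\Az$-neighbors:
\begin{itemize}
\item $y$ has no $\Az$-neighbor, and $x,z$ meet the same or distinct vertices of $\Az$;
\item $y$ has an $\Az$-neighbor, and the neighbors of $x,y,z$ are all equal, or $x,z$ share one while $y$ differs, or $y$ shares with one endpoint, or all three are distinct.
\end{itemize}
Some of these must be discarded if they create a fish. For example, $x$ and $z$ sharing $s\in\Az$ gives the $4$-cycle $s-x-y-z-s$, which meets the triangle $u^*s\ldots$ only if a triangle through $s$ exists, so each case should be checked.

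If $G[W]\cong 2K_2$, all four vertices meet $\Az$. The families are indexed by partitions of the four endpoints under the constraint coming from the edge structure. For example, both ends of an edge sharing $s$ gives a triangle $sx_1y_1$, and this must be checked against the $4$-cycle $s-u^*-a-b-\cdots$. A $4$-cycle through $s$ would need two length-two paths, so the triangle $sx_1y_1$ is likely allowed.

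The remaining vertices of $\Az$ are leaves of $u^*$, and their number is fixed by \eqref{eq:mcount}. Since $m=18$ is fixed, every family is a single explicit graph. For some partitions there may not be enough $\Az$ vertices, since $|\Az|$ drops as $e(N,W)$ grows. For each surviving graph, write the equitable quotient and its characteristic polynomial $f$.

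Next compare with the lower bound $\rhoext(18)>9/2$ from $p_{18}(9/2)<0$, proved in Proposition~\ref{prop:ew2-t1}. For each $f$, expand $f(x+9/2)$ and check that all coefficients are positive, as in the shifted-coefficient check of Appendix~\ref{app:ew2-t1}. Then $f>0$ on $[9/2,\infty)$, so the Perron root lies below $\rhoext(18)$.

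The main obstacle I expect is a family where the shifted coefficients are not all positive, because its spectral radius is close to $\rhoext(18)$. For such a case I would first try a sharper rational lower bound $r$ with $p_{18}(r)<0$, say around $4.6$. Failing that, I would divide $f$ by $p_{18}$ as in Lemma~\ref{lem:root-template} and verify the quotient and remainder signs on $[r,\infty)$.
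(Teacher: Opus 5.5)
Your proposal is correct and follows the paper's proof: the crude bound from \eqref{eq:zz7} and \eqref{eq:lower32} rules out every even $m\ge 20$, and at $m=18$ the edge count $|\Az|+e(\Az,W)=6$ leaves exactly the ten families you describe (six with $G[W]\cong P_3$, four with $G[W]\cong 2K_2$). You can check all ten spectrally without first deciding which contain a fish. The only difference is that you shift at $9/2$, while the paper shifts at the smaller point $17/4$ (using $p_{18}(17/4)<0$); since all coefficients of $f_F(t+17/4)$ are positive, so are those of $f_F(t+9/2)$, so your check succeeds and the fallback you anticipate is never needed.
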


\begin{proof}
Substituting $e(\Ap)=6$, $|\Ap|=4$, $e(W)=2$, and $e(\Ap,W)=0$ into \eqref{eq:zz7} gives
\[
\rho(G)^2-\frac32\rho(G)<16.
\]
For every even $m\ge 20$, this contradicts \eqref{eq:lower32}, since
\[
m-\frac14\sqrt{4m-5}-\frac74>16.
\]
Thus it remains to handle $m=18$.

Assume now that $m=18$. Since $e(W)=2$ and every vertex of $W$ has a neighbor in $W$, one has
\[
G[W]\cong P_3\quad\text{or}\quad G[W]\cong 2K_2.
\]
Because $e(\Ap,W)=0$, no vertex of $W$ meets $\Ap$. By Lemma~\ref{lem:local-basic}, each vertex of $W$ has at most one neighbor in $\Az$. The edge count gives
\[
18=(4+|\Az|)+6+e(\Az,W)+2,
\]
so
\begin{equation}\label{eq:ew2-t0-count}
|\Az|+e(\Az,W)=6.
\end{equation}

First suppose $G[W]\cong P_3$, say $G[W]=x-y-z$. The endpoints $x$ and $z$ each have degree $1$ inside $W$, so each must meet $\Az$. The middle vertex $y$ may or may not meet $\Az$.

If $y$ has no neighbor in $\Az$, then $e(\Az,W)=2$ and $|\Az|=4$. The two endpoints either meet the same vertex of $\Az$ or distinct vertices of $\Az$. We denote the corresponding families by
\[
\mathcal P^{0,s},\qquad \mathcal P^{0,d}.
\]
If $y$ has a neighbor in $\Az$, then $e(\Az,W)=3$ and $|\Az|=3$. Up to the symmetry $x\leftrightarrow z$, there are four possibilities for the partition of $\{x,y,z\}$ induced by their $\Az$-neighbors: all three share one vertex, an adjacent pair shares one vertex, the nonadjacent pair $x,z$ shares one vertex, or all three meet distinct vertices. We denote these families by
\[
\mathcal P^{1,[3]},\quad
\mathcal P^{1,adj},\quad
\mathcal P^{1,nonadj},\quad
\mathcal P^{1,[1,1,1]}.
\]

Now suppose $G[W]\cong 2K_2$, say $G[W]=x_1y_1\cup x_2y_2$. Every vertex of $W$ has degree $1$ inside $W$, so every vertex of $W$ must meet $\Az$. Thus $e(\Az,W)=4$, and \eqref{eq:ew2-t0-count} gives $|\Az|=2$. Up to isomorphism, there are four possibilities:
\[
\mathcal K^{[4]},\quad
\mathcal K^{[3,1]},\quad
\mathcal K^{[2,2]_e},\quad
\mathcal K^{[2,2]_c}.
\]
Here $\mathcal K^{[4]}$ means that all four vertices of $W$ meet the same vertex of $\Az$; $\mathcal K^{[3,1]}$ means that three meet one vertex and the fourth meets the other; $\mathcal K^{[2,2]_e}$ means that the endpoints of each edge of $G[W]$ meet the same vertex of $\Az$; and $\mathcal K^{[2,2]_c}$ means that one endpoint from each edge meets one vertex of $\Az$ and the other endpoints meet the other.

Thus every graph in the $m=18$, $\tau=0$ branch belongs to one of the ten explicit families above. Let $f_F(x)$ be the characteristic polynomial of the natural irreducible equitable quotient for a family $F$. We have
\[
p_{18}\!\left(\frac{17}{4}\right)=-\frac{15071}{256}<0,
\]
so
\[
\rhoext(18)>\frac{17}{4}.
\]
The shifted-polynomial check in Appendix~\ref{app:ew2-t0} shows that, for each of the ten families $F$,
\[
f_F\!\left(t+\frac{17}{4}\right)>0
\qquad (t\ge 0).
\]
Therefore each quotient polynomial has no root in $[17/4,\infty)$, and the Perron root of every family is strictly less than $17/4<\rhoext(18)$. This proves the proposition.
\end{proof}

\begin{corollary}\label{cor:ew2done}
If $G[\Ap]\cong K_4$ and $e(W)=2$, then $\rho(G)<\rhoext(m)$ for every even $m\ge 18$.
\end{corollary}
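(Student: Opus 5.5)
The corollary follows by splitting on $\tau=e(\Ap,W)$. Since $e(W)=2$ and $G[W]$ has no isolated vertices (Lemma~\ref{lem:local-basic}), we have $G[W]\cong P_3$ or $G[W]\cong 2K_2$. In both cases $\alpha(G[W])=2$, so Corollary~\ref{cor:ewalpha} gives $\tau\le 2$. Hence $\tau\in\{0,1,2\}$, and these three values exhaust the branch. The plan is simply to invoke the three subcase results already established, one for each value of $\tau$.

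For $\tau=2$, apply Corollary~\ref{cor:ew2-t2done}. It rests on Propositions~\ref{prop:ew2-t2-p3} and~\ref{prop:ew2-t2-2k2}, which together cover both shapes of $G[W]$ uniformly for all even $m\ge 18$. For $\tau=1$, apply Corollary~\ref{cor:ew2-t1done} (Proposition~\ref{prop:ew2-t1}). There the crude inequality from \eqref{eq:zz7} handles $m\ge 22$, and the finite quotient checks handle $m=18,20$. For $\tau=0$, apply Proposition~\ref{prop:ew2-t0}. There the crude inequality handles $m\ge 20$, and the ten explicit families handle $m=18$.

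In each subcase the conclusion is $\rho(G)<\rhoext(m)$ for every even $m\ge 18$, which is exactly the claim. No further estimates are needed. The only point that needs care is exhaustiveness: the split on $\tau$ must cover every graph in the branch. This holds because $\tau$ is a nonnegative integer bounded by $\alpha(G[W])=2$, and because each subcase proposition classified all admissible configurations for its value of $\tau$ without extra hypotheses beyond $G[\Ap]\cong K_4$ and $e(W)=2$. I expect no genuine obstacle. The substantive work, namely the finite endpoint checks, was already carried out in the propositions being cited.
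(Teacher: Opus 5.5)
Your proposal is correct and matches the paper's proof. The paper uses Corollary~\ref{cor:ewalpha} to get $e(\Ap,W)\le\alpha(G[W])=2$, since $G[W]\cong P_3$ or $2K_2$, and then disposes of the cases $e(\Ap,W)=2,1,0$ by Corollaries~\ref{cor:ew2-t2done}, \ref{cor:ew2-t1done} and Proposition~\ref{prop:ew2-t0}, exactly as you do.
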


\begin{proof}
By Corollary~\ref{cor:ewalpha}, one has $e(\Ap,W)\le 2$. The cases $e(\Ap,W)=2,1,0$ are handled by Corollaries~\ref{cor:ew2-t2done} and~\ref{cor:ew2-t1done}, and Proposition~\ref{prop:ew2-t0}, respectively.
\end{proof}
\section{The endpoint case $e(W)=3$}\label{sec:ew3}

For $e(W)=3$, the inequalities rule out all even $m\ge20$. Thus only the endpoint $m=18$ remains, where the interface independence principle forces a short finite classification.

\begin{proposition}\label{prop:ew3-large}
Assume $G[\Ap]\cong K_4$ and $e(W)=3$. Then
\[
\rho(G)^2-\frac12\rho(G)<19.
\]
Consequently $\rho(G)<\rhoext(m)$ for every even $m\ge 20$.
\end{proposition}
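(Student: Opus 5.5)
The argument mirrors Proposition~\ref{prop:ew2}: bound the interface term by the independence principle, then substitute into \eqref{eq:zz7}. It has three steps.

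\textbf{Step 1: bound the interface.} I first find the possible shapes of $G[W]$. By Lemma~\ref{lem:local-basic}, $G[W]$ has no isolated vertices, and here it has exactly three edges. So $G[W]$ is one of
\[
P_4,\quad K_{1,3},\quad K_3,\quad P_3\cup K_2,\quad 3K_2 .
\]
Their independence numbers are $2,3,1,3,3$ respectively. Hence $\alpha(G[W])\le 3$, and Corollary~\ref{cor:ewalpha} gives $e(\Ap,W)\le 3$. This bound is also available directly from the standing branch hypothesis $e(\Ap,W)\le 3$.

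\textbf{Step 2: substitute into \eqref{eq:zz7}.} In the $K_4$-branch we have $e(\Ap)=6$ and $|\Ap|=4$. With $e(W)=3$, the coefficient of $\rho(G)$ in \eqref{eq:zz7} is
\[
6-4+\tfrac32-3=\tfrac12 .
\]
The right-hand side of \eqref{eq:zz7} is $2\cdot 6+4+e(\Ap,W)\le 19$. Together these give
\[
\rho(G)^2-\tfrac12\rho(G)<19,
\]
which is the first claim.

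\textbf{Step 3: compare with \eqref{eq:lower12}.} Recall that \eqref{eq:lower12} gives
\[
\rho(G)^2-\tfrac12\rho(G)>m+\tfrac14\sqrt{4m-5}-\tfrac54 .
\]
At $m=20$ the right-hand side is $18.75+\tfrac14\sqrt{75}$. Since $\sqrt{75}>8$, this exceeds $20.75>19$. The expression is increasing in $m$, so it exceeds $19$ for every even $m\ge 20$.

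This contradicts the inequality from Step 2. Therefore the assumption $\rho(G)\ge\rhoext(m)$, which underlies \eqref{eq:lower12}, fails, and $\rho(G)<\rhoext(m)$.

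Every step is routine. The only point needing care is Step 1: the enumeration of three-edge graphs without isolated vertices must be complete, so that $\alpha(G[W])\le 3$ holds in every case.
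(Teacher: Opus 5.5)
Your proposal is correct and follows the paper's proof: substitute $e(\Ap)=6$, $|\Ap|=4$, $e(W)=3$ and $e(\Ap,W)\le 3$ into \eqref{eq:zz7}, then compare with \eqref{eq:lower12} for even $m\ge 20$. The only difference is that the paper takes $e(\Ap,W)\le 3$ directly from the standing $K_4$-branch hypothesis, so your enumeration of three-edge graphs via $\alpha(G[W])$ is a valid but redundant cross-check.
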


\begin{proof}
By the standing hypothesis of the $K_4$-branch, one has $e(\Ap,W)\le 3$. Since $e(\Ap)=6$, $|\Ap|=4$, and $e(W)=3$, Eq.~\eqref{eq:zz7} gives
\[
\rho(G)^2-\frac12\rho(G)<16+e(\Ap,W)\le 19.
\]
The conclusion follows because \eqref{eq:lower12} shows
\[
\rho(G)^2-\frac12\rho(G)>m+\frac14\sqrt{4m-5}-\frac54>19
\]
for every even $m\ge 20$.
\end{proof}

It remains to exclude the endpoint value $m=18$.

\begin{proposition}\label{prop:ew3-m18}
Assume $m=18$, $G[\Ap]\cong K_4$, and $e(W)=3$. Then
\[
\rho(G)<\rhoext(18).
\]
\end{proposition}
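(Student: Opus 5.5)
The plan is to do a finite classification at $m=18$, driven by the interface independence principle, and then run an exact quotient comparison against a rational lower bound for $\rhoext(18)$, exactly as in Proposition~\ref{prop:ew2-t0}.

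\textbf{Step 1: the possible shapes of $G[W]$.} By Lemma~\ref{lem:local-basic}, $G[W]$ has no isolated vertices and has exactly three edges. Up to isomorphism it is therefore one of
\[
P_4,\quad K_{1,3},\quad K_3,\quad P_3\cup K_2,\quad 3K_2.
\]
Corollary~\ref{cor:ewalpha} gives $\tau=e(\Ap,W)\le\alpha(G[W])$, and the standing hypothesis gives $\tau\le 3$.

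\textbf{Step 2: use the edge count to cut down the cases.} The count \eqref{eq:mcount} with $m=18$ gives
\[
18=(4+|\Az|)+6+\tau+e(\Az,W)+3,
\]
that is,
\[
|\Az|+\tau+e(\Az,W)=5.
\]
Every vertex of $W$ with $W$-degree $1$ needs a neighbor outside $W$. That neighbor is a single vertex of $\Ap$ or of $\Az$, but not both. This forces:
\begin{itemize}
\item $3K_2$: all six vertices need outside neighbors, so $\tau+e(\Az,W)=6>5$. Impossible.
\item $P_3\cup K_2$: four vertices need outside neighbors, so $|\Az|\le 1$.
\item $P_4$ and $K_{1,3}$: two or three pendant vertices need outside neighbors.
\item $K_3$: only optional outside neighbors.
\end{itemize}
Vertices of $U$ must form an independent set (Proposition~\ref{prop:interface-independence}). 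Two vertices of $U$ joined by a $W$-path of length $2$ must have distinct $\Ap$-neighbors, by the $4$-cycle argument in Proposition~\ref{prop:ew2-t2-p3}. Further fish checks through $u^*$ and the $K_4$ will prune the attachment patterns. Each $\Az$-vertex with a $W$-neighbor still counts toward $|\Az|$, which keeps the case list short.

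\textbf{Step 3: a first screening by the crude inequality.} Apply \eqref{eq:zz7} with $e(W)=3$:
\[
\rho^2-\tfrac12\rho<16+\tau.
\]
Compare this with a rational lower bound for $\rhoext(18)$. I would first try $x_0=9/2$, using $p_{18}(9/2)<0$ as computed earlier. This gives $\rho^2-\rho/2\ge 18$ at the threshold, which already kills $\tau\le 1$. The strengthened form that keeps $\sum_{\Az}x_v/x_{u^*}$ might kill more of the $\tau=1$ cases; if not, those simply join the finite list.

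\textbf{Step 4: exact comparison of the surviving families.} I expect $\tau\in\{2,3\}$ to survive, with $G[W]\in\{P_4,K_{1,3},P_3\cup K_2\}$, plus possibly some $\tau\le 1$ families. For each surviving family, write the natural equitable quotient, compute its characteristic polynomial $f_F$, and verify
\[
f_F(t+x_0)>0\quad\text{for } t\ge 0
\]
by showing all shifted coefficients are positive. This is the same shifted-coefficient check as in Appendix~\ref{app:ew2-t0}. It shows each Perron root is below $x_0<\rhoext(18)$.

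\textbf{Main obstacle.} The hard part is the families with $\tau=3$, for example $K_{1,3}$ with all three leaves attached to distinct vertices of $\Ap$. Their spectral radii may be very close to $\rhoext(18)$, so $x_0=9/2$ may not separate them. If a shifted polynomial has a negative coefficient, I would tighten $x_0$ to a sharper rational $r$ with $p_{18}(r)<0$, such as $r=229/50$. Failing that, I would use the decomposition of Lemma~\ref{lem:root-template}, $f_F=q\,p_{18}+R$, and prove $q\ge 0$ and $R>0$ on $[r,\infty)$.
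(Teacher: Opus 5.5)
Your route is essentially the paper's: use \eqref{eq:zz7} to force $\tau=e(\Ap,W)$ up, use Corollary~\ref{cor:ewalpha} and the edge count to reduce to finitely many graphs, and exclude each survivor by an exact comparison against a rational lower bound for $\rhoext(18)$. One correction makes it much shorter. Since $(9/2)^2-\frac12\cdot\frac92=18$ exactly and $\rho>9/2$, your Step~3 also kills $\tau=2$, not just $\tau\le 1$ (the paper gets the same from \eqref{eq:lower12}), so $\tau=3$ is forced; then $K_3$ and $P_4$ are excluded because $\alpha(G[W])\ge 3$ is needed, the count becomes $|\Az|+e(\Az,W)=2$, and only four graphs with $G[W]\in\{K_{1,3},P_3\cup K_2\}$ remain, all with Perron root below $43/10$, so the feared near-tie with $\rhoext(18)\approx 4.594$ and any tightening of $x_0$ never arise.
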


\begin{proof}
Let
\[
U=\{w\in W:d_{\Ap}(w)=1\}.
\]
By Corollary~\ref{cor:ewalpha},
\[
e(\Ap,W)=|U|\le \alpha(G[W]).
\]
If $e(\Ap,W)\le 2$, then \eqref{eq:zz7}, with $e(\Ap)=6$, $|\Ap|=4$, and $e(W)=3$, gives
\[
\rho(G)^2-\frac12\rho(G)<16+e(\Ap,W)\le 18.
\]
This contradicts \eqref{eq:lower12} at $m=18$, since
\[
18+\frac{\sqrt{67}}4-\frac54>18.
\]
Hence
\[
e(\Ap,W)=3.
\]

Since every vertex of $W$ has a neighbor in $W$ by Lemma~\ref{lem:local-basic}, the graph $G[W]$ has three edges and no isolated vertices. Thus
\[
G[W]\cong K_3,\quad P_4,\quad K_{1,3},\quad P_3\cup K_2,\quad \text{or}\quad 3K_2.
\]
The first two possibilities are impossible because $\alpha(K_3)=1$ and $\alpha(P_4)=2$, while $|U|=3$. Hence
\[
G[W]\in\{K_{1,3},\,P_3\cup K_2,\,3K_2\}.
\]
Also, the edge count gives
\[
18=|N|+e(\Ap)+e(N,W)+e(W)
=(4+|\Az|)+6+\bigl(3+e(\Az,W)\bigr)+3,
\]
so
\begin{equation}\label{eq:ew3-a0count}
|\Az|+e(\Az,W)=2.
\end{equation}

We now classify the remaining possibilities.

First suppose $G[W]\cong 3K_2$. Since $U$ is an independent set of size $3$, it contains one endpoint from each edge of $G[W]$. The other three endpoints have no neighbor in $\Ap$ and have degree $1$ inside $W$. Each must therefore have a neighbor in $\Az$, so $e(\Az,W)\ge 3$, contradicting \eqref{eq:ew3-a0count}. Thus $G[W]\not\cong 3K_2$.

Next suppose $G[W]\cong K_{1,3}$, with center $x$ and leaves $y_1,y_2,y_3$. Since the leaves are the unique independent set of size $3$, we have
\[
U=\{y_1,y_2,y_3\}.
\]
The three leaves must meet three distinct vertices of $\Ap$. Indeed, if two leaves $y_i,y_j$ shared the same neighbor $a\in\Ap$, then
\[
a-y_i-x-y_j-a
\]
would be a $4$-cycle, while $u^*ab u^*$ is a triangle for any $b\in\Ap\setminus\{a\}$, giving a fish.

The leaves cannot meet $\Az$ by Lemma~\ref{lem:local-basic}. Hence \eqref{eq:ew3-a0count} leaves exactly two possibilities:
\begin{enumerate}[label={\rm(\roman*)}]
\item $|\Az|=2$ and $e(\Az,W)=0$;
\item $|\Az|=1$ and $e(\Az,W)=1$, in which case the unique edge from $\Az$ to $W$ joins the unique vertex of $\Az$ to the center $x$.
\end{enumerate}
Thus the star case gives two graphs, denoted $\mathcal S_2$ and $\mathcal S_1$, respectively.

Finally suppose $G[W]\cong P_3\cup K_2$. Write the path as $p_1p_2p_3$ and the separate edge as $k_1k_2$. Since $U$ is an independent set of size $3$, after relabeling we may assume
\[
U=\{p_1,p_3,k_1\}.
\]
The vertex $k_2$ has degree $1$ inside $W$ and no neighbor in $\Ap$, so it must have a neighbor in $\Az$. By \eqref{eq:ew3-a0count}, this forces
\[
|\Az|=1,\qquad e(\Az,W)=1,
\]
and the unique edge from $\Az$ to $W$ is incident with $k_2$.

The vertices $p_1$ and $p_3$ cannot share their neighbor in $\Ap$: if both met $a\in\Ap$, then
\[
a-p_1-p_2-p_3-a
\]
would be a $4$-cycle, while $u^*ab u^*$ is a triangle for any $b\in\Ap\setminus\{a\}$. Therefore the neighbors in $\Ap$ of $p_1$ and $p_3$ are distinct. Up to symmetry, there are exactly two possibilities: either $k_1$ shares its $\Ap$-neighbor with one of $p_1,p_3$, or the three vertices $p_1,p_3,k_1$ meet three distinct vertices of $\Ap$. Denote these two graphs by $\mathcal P_{21}$ and $\mathcal P_{111}$.

So every graph in the present branch is one of
\[
\mathcal S_2,\quad \mathcal S_1,\quad \mathcal P_{21},\quad \mathcal P_{111}.
\]
For reference, Table~\ref{tab:ew3-endpoint} summarizes the four endpoint families.
\begin{table}[htbp]
\centering
\small
\caption{Endpoint families remaining in the $m=18$, $e(W)=3$ branch.}
\label{tab:ew3-endpoint}
\begin{tabular}{@{}c c p{0.45\textwidth}@{}}
\toprule
Family & $G[W]$ & additional structure \\
\midrule
$\mathcal S_2$ & $K_{1,3}$ & $|\Az|=2$ and no edge from $\Az$ to $W$ \\
$\mathcal S_1$ & $K_{1,3}$ & the center of the star meets the unique vertex of $\Az$ \\
$\mathcal P_{21}$ & $P_3\cup K_2$ & two of the three $\Ap$-neighbors coincide \\
$\mathcal P_{111}$ & $P_3\cup K_2$ & the three $\Ap$-neighbors are distinct \\
\bottomrule
\end{tabular}
\end{table}

It remains to compare these four graphs spectrally. Let
\[
p_{18}(x)=x^4-18x^2-16x+8.
\]
At
\[
x_0=\frac{43}{10},
\]
one has
\[
p_{18}(x_0)=-\frac{517399}{10000}<0,
\]
so
\[
\rhoext(18)>x_0.
\]

For the four graphs above, direct characteristic-polynomial computations give
\[
\chi_{\mathcal S_2}(x)
=
x(x^2-3)(x^2+x-1)^2\,q_{\mathcal S_2}(x),
\]
where
\[
q_{\mathcal S_2}(x)=x^4-2x^3-10x^2+6;
\]
\[
\chi_{\mathcal S_1}(x)
=
(x^2+x-1)^3\,q_{\mathcal S_1}(x),
\]
where
\[
q_{\mathcal S_1}(x)=x^4-3x^3-9x^2+12x+16;
\]
\[
\chi_{\mathcal P_{21}}(x)
=
(x+1)^2(x^3-3x+1)\,q_{\mathcal P_{21}}(x),
\]
where
\[
q_{\mathcal P_{21}}(x)=x^6-2x^5-12x^4+5x^3+27x^2-3x-12;
\]
and
\[
\chi_{\mathcal P_{111}}(x)
=
(x+2)(x^2-x-1)(x^2+x-1)^2\,q_{\mathcal P_{111}}(x),
\]
where
\[
q_{\mathcal P_{111}}(x)=x^4-3x^3-7x^2+6x+8.
\]
All auxiliary factors have largest real root less than $2$. Therefore, in each case, the Perron root is the largest real root of the displayed factor $q$.

We now check that each displayed factor is positive on $[x_0,\infty)$. At $x_0=43/10$,
\[
q_{\mathcal S_2}(x_0)=\frac{39661}{10000}>0,\qquad
q_{\mathcal S_1}(x_0)=\frac{45491}{10000}>0,
\]
\[
q_{\mathcal P_{21}}(x_0)=\frac{150497989}{1000000}>0,\qquad
q_{\mathcal P_{111}}(x_0)=\frac{77291}{10000}>0.
\]
Also each $q$ is increasing on $[x_0,\infty)$. For $\mathcal S_2$, this follows from
\[
q'_{\mathcal S_2}(x)=2x(2x^2-3x-10)>0\qquad (x\ge x_0).
\]
For $\mathcal S_1$,
\[
q'_{\mathcal S_1}(x)=4x^3-9x^2-18x+12,\qquad
q''_{\mathcal S_1}(x)=12x^2-18x-18,
\]
and $q''_{\mathcal S_1}(x)>0$ for $x\ge x_0$, while
\[
q'_{\mathcal S_1}(x_0)=\frac{43109}{500}>0.
\]
For $\mathcal P_{21}$,
\[
q'_{\mathcal P_{21}}(x)=6x^5-10x^4-48x^3+15x^2+54x-3,
\]
\[
q''_{\mathcal P_{21}}(x)=30x^4-40x^3-144x^2+30x+54.
\]
For $x\ge x_0$,
\[
q''_{\mathcal P_{21}}(x)
=
x^2(30x^2-40x-144)+30x+54>0,
\]
and
\[
q'_{\mathcal P_{21}}(x_0)=\frac{104595979}{50000}>0.
\]
Finally, for $\mathcal P_{111}$,
\[
q'_{\mathcal P_{111}}(x)=4x^3-9x^2-14x+6,\qquad
q''_{\mathcal P_{111}}(x)=12x^2-18x-14,
\]
and $q''_{\mathcal P_{111}}(x)>0$ for $x\ge x_0$, while
\[
q'_{\mathcal P_{111}}(x_0)=\frac{48709}{500}>0.
\]
Thus each $q$ is positive on $[x_0,\infty)$, so every one of the four Perron roots is less than $x_0$. Since $x_0<\rhoext(18)$, we conclude
\[
\rho(G)<\rhoext(18).
\]
\end{proof}

\begin{corollary}\label{cor:ew3done}
If $G[\Ap]\cong K_4$ and $e(W)=3$, then
\[
\rho(G)<\rhoext(m)
\]
for every even $m\ge 18$.
\end{corollary}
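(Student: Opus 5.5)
This corollary follows by combining the two propositions of this section. Throughout we are in the standing $K_4$-branch, so $e(\Ap,W)\le 3$. I would split on the parity-restricted range of $m$.

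\emph{Case $m\ge 20$.} Proposition~\ref{prop:ew3-large} applies directly. It rests on \eqref{eq:zz7} with $e(\Ap)=6$, $|\Ap|=4$, $e(W)=3$ and $e(\Ap,W)\le 3$, which gives
\[
\rho(G)^2-\tfrac12\rho(G)<19 .
\]
This contradicts the lower bound \eqref{eq:lower12} when $m\ge 20$. Hence $\rho(G)<\rhoext(m)$ for all even $m\ge 20$.

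\emph{Case $m=18$.} Proposition~\ref{prop:ew3-m18} applies. It uses the interface independence principle, Corollary~\ref{cor:ewalpha}, to force $e(\Ap,W)=3$. It then reduces $G$ to the four explicit graphs $\mathcal S_2$, $\mathcal S_1$, $\mathcal P_{21}$, $\mathcal P_{111}$. Their Perron roots lie below $43/10<\rhoext(18)$.

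Since every even $m\ge 18$ is either $18$ or at least $20$, the two cases are exhaustive. The corollary follows. There is no real obstacle here; all the substantive work, namely the finite classification and the polynomial sign checks at $m=18$, has already been done in Proposition~\ref{prop:ew3-m18}. The only thing to watch is that the bound $e(\Ap,W)\le3$ used in Proposition~\ref{prop:ew3-large} comes from the branch hypothesis, and Lemma~\ref{lem:K4-large-interface} guarantees this holds.
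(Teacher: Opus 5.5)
Your proposal is correct and is exactly the paper's argument: the corollary follows by applying Proposition~\ref{prop:ew3-large} for even $m\ge 20$ and Proposition~\ref{prop:ew3-m18} for $m=18$. One small imprecision in your summary of Proposition~\ref{prop:ew3-m18}: the equality $e(\Ap,W)=3$ comes from \eqref{eq:zz7} ruling out $e(\Ap,W)\le 2$, and the independence principle is then used to restrict $G[W]$; this does not affect the corollary.
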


\begin{proof}
For even $m\ge 20$, this is Proposition~\ref{prop:ew3-large}; for $m=18$, it is Proposition~\ref{prop:ew3-m18}.
\end{proof}

\section{Proof of the main theorem}\label{sec:proof-main}

\begin{proof}[Proof of Theorem~\ref{thm:main}]
By Proposition~\ref{prop:reduction}, it suffices to treat the branch
\[
G[\Ap]\cong K_4,\qquad e(\Ap,W)\le 3.
\]

If $e(W)=0$, then Proposition~\ref{prop:ew0} gives $\rho(G)<\rhoext(m)$ for every even $m\ge 18$.

If $e(W)=1$, then Corollary~\ref{cor:ew1done} gives $\rho(G)<\rhoext(m)$ for every even $m\ge 18$.

If $e(W)=2$, then Corollary~\ref{cor:ew2done} gives $\rho(G)<\rhoext(m)$ for every even $m\ge 18$.

If $e(W)=3$, then Corollary~\ref{cor:ew3done} gives $\rho(G)<\rhoext(m)$ for every even $m\ge 18$.

Therefore every branch other than the terminal star branch is impossible for even $m\ge 18$. The terminal branch is handled by Lemma~\ref{lem:H7-terminal}, which gives exactly the comparison graph $\Smk$. This proves the theorem.
\end{proof}

\appendix
\setcounter{table}{0}
\renewcommand{\thetable}{A.\arabic{table}}
\renewcommand{\theHtable}{A.\arabic{table}}
\numberwithin{equation}{section}

\section{Exact polynomial checks for the finite families}\label{app:signchecks}

All computations in this appendix are exact. The finite-family polynomials come from the natural equitable partitions described in the corresponding proofs. In the finite endpoint cases, the displayed tables record a positive denominator and a positive lower bound for the shifted coefficients, which makes the root exclusions reproducible without relying on numerical approximations.

The accompanying ancillary file \texttt{finite\_verification.py} verifies the finite shifted-coefficient checks appearing in this appendix using rational arithmetic. It does not verify the graph classifications or the derivation of the quotient matrices, which are proved in the main text. It verifies the final exact polynomial inequalities used to exclude the finite endpoint families.

In several tables the smallest shifted coefficient equals the denominator $D_{F,m}$. This is expected: the quotient polynomials are monic, so after multiplying by $D_{F,m}$, the leading coefficient is $D_{F,m}$; in the listed rows this leading coefficient is also the smallest coefficient.

When an ordinary-leaf class has size $0$, we delete the corresponding row and column in the quotient matrix. Equivalently, if one keeps the displayed formal partition, the displayed polynomial may acquire an extra harmless factor $x$, which does not affect the Perron-root comparison.

\subsection{The obstruction family $T_m$}

We have
\[
q_T(x)=x^3-3x^2+(6-m)x+3m-30.
\]
Evaluating at $\Lm=(1+\sqrt{4m-5})/2$ gives
\[
q_T(\Lm)=\frac{4m+5\sqrt{4m-5}-103}{4},
\]
which is positive for every even $m\ge 18$.

Moreover,
\[
q_T'(x)=3x^2-6x+6-m.
\]
Hence
\[
q_T'(\Lm)=\frac{4m-3\sqrt{4m-5}}{2}>0
\]
for every $m\ge 1$ since
\[
16m^2>9(4m-5).
\]
Finally,
\[
q_T''(x)=6x-6>0\qquad\text{for all }x\ge \Lm>1.
\]
Thus $q_T'$ is increasing on $[\Lm,\infty)$, so $q_T'(x)>0$ for all $x\ge \Lm$. Hence $q_T$ is strictly increasing on $[\Lm,\infty)$, and since $q_T(\Lm)>0$, one has $q_T(x)>0$ for all $x\ge \Lm$. Because $\rhoext(m)>\Lm$, the Perron root of $T_m$ is strictly smaller than $\rhoext(m)$ for every even $m\ge 18$.

\subsection{The same-$\Az$ family}

Recall
\[
R_{\mathrm{same}}(x)=10x^3+(m-44)x^2+\Bigl(28-\frac{11m}{2}\Bigr)x+80-4m.
\]
At $x=\Lm$ this becomes
\[
R_{\mathrm{same}}(\Lm)=\frac{4m^2+11m\sqrt{4m-5}-147m-42\sqrt{4m-5}+482}{4}.
\]
As a function of $m$ its derivative is
\[
\frac{d}{dm}R_{\mathrm{same}}(\Lm)
=
2m-\frac{147}{4}+\frac{66m-139}{4\sqrt{4m-5}}.
\]
For $m\ge 18$ one has
\[
2m-\frac{147}{4}\ge -\frac34,\qquad
\frac{66m-139}{4\sqrt{4m-5}}
>
\frac{66m-139}{8m}\ge \frac{1049}{144}>\frac34,
\]
so $\frac{d}{dm}R_{\mathrm{same}}(\Lm)>0$ on $[18,\infty)$. Since
\[
R_{\mathrm{same}}(\Lm)\big|_{m=18}=-217+39\sqrt{67}>0,
\]
it follows that $R_{\mathrm{same}}(\Lm)>0$ for every even $m\ge 18$.

Next,
\[
R_{\mathrm{same}}'(x)=30x^2+(2m-88)x+28-\frac{11m}{2},
\]
and
\[
R_{\mathrm{same}}''(x)=60x+2m-88.
\]
For $x\ge \Lm$ and $m\ge 18$,
\[
R_{\mathrm{same}}''(x)\ge 60\Lm+2m-88>0,
\]
so $R_{\mathrm{same}}'$ is increasing on $[\Lm,\infty)$. Therefore it suffices to check $R_{\mathrm{same}}'(\Lm)>0$:
\[
R_{\mathrm{same}}'(\Lm)=\frac{2m\sqrt{4m-5}+51m-58\sqrt{4m-5}-92}{2}.
\]
Its derivative with respect to $m$ is
\[
\frac{d}{dm}R_{\mathrm{same}}'(\Lm)
=
\frac{3\bigl(4m+17\sqrt{4m-5}-42\bigr)}{2\sqrt{4m-5}}>0
\]
for $m\ge 18$, and
\[
R_{\mathrm{same}}'(\Lm)\big|_{m=18}=413-11\sqrt{67}>0.
\]
Hence $R_{\mathrm{same}}'(x)>0$ for all $x\ge \Lm$ and all even $m\ge 18$. By Lemma~\ref{lem:root-template}, the Perron root of the same-$\Az$ family is less than $\rhoext(m)$.

\subsection{The distinct-$\Az$ family}

Here
\[
R_{\mathrm{dist}}(x)=11x^3+(m-47)x^2+\Bigl(37-\frac{13m}{2}\Bigr)x+41-m.
\]
Evaluating at $\Lm$ yields
\[
R_{\mathrm{dist}}(\Lm)=\frac{4m^2+11m\sqrt{4m-5}-143m-31\sqrt{4m-5}+349}{4}.
\]
Its derivative with respect to $m$ is
\[
\frac{d}{dm}R_{\mathrm{dist}}(\Lm)
=
2m-\frac{143}{4}+\frac{66m-117}{4\sqrt{4m-5}}>0
\]
for $m\ge 18$, since the first term is at least $1/4$ and the second is positive. Also
\[
R_{\mathrm{dist}}(\Lm)\big|_{m=18}=-\frac{929}{4}+\frac{167}{4}\sqrt{67}>0.
\]
Hence $R_{\mathrm{dist}}(\Lm)>0$ for every even $m\ge 18$.

Furthermore,
\[
R_{\mathrm{dist}}'(x)=33x^2+(2m-94)x+37-\frac{13m}{2},
\qquad
R_{\mathrm{dist}}''(x)=66x+2m-94.
\]
Since $R_{\mathrm{dist}}''(x)>0$ for all $x\ge \Lm$ and $m\ge 18$, the derivative is increasing on $[\Lm,\infty)$. It therefore suffices to evaluate at $\Lm$:
\[
R_{\mathrm{dist}}'(\Lm)=\frac{2m\sqrt{4m-5}+55m-61\sqrt{4m-5}-86}{2}.
\]
Its derivative with respect to $m$ is
\[
\frac{d}{dm}R_{\mathrm{dist}}'(\Lm)
=
\frac{12m+55\sqrt{4m-5}-132}{2\sqrt{4m-5}}>0
\]
for $m\ge 18$, and
\[
R_{\mathrm{dist}}'(\Lm)\big|_{m=18}=452-\frac{25}{2}\sqrt{67}>0.
\]
Thus $R_{\mathrm{dist}}'(x)>0$ for all $x\ge \Lm$ and all even $m\ge 18$. By Lemma~\ref{lem:root-template}, the distinct-$\Az$ family lies below $\rhoext(m)$.

\subsection{The mixed family}

Here the quotient factor is
\[
q_{\mathrm{mix}}(x)=x^3-2x^2+3x+m-26,
\]
and the remainder polynomial is
\begin{align*}
R_{\mathrm{mix}}(x)
&=\Bigl(\frac{13m}{2}-56\Bigr)x^3+(m^2-28m+76)x^2 \\
&\quad +(m^2-\frac{73m}{2}+145)x-\frac{m^2}{2}+16m-54.
\end{align*}

First, $q_{\mathrm{mix}}$ is increasing on $[0,\infty)$ because
\[
q_{\mathrm{mix}}'(x)=3x^2-4x+3>0
\]
for all real $x$. Hence on $[\Lm,\infty)$ it suffices to evaluate at $\Lm$:
\[
q_{\mathrm{mix}}(\Lm)=\frac{2m\sqrt{4m-5}+2m+\sqrt{4m-5}-97}{4}.
\]
This is positive for every $m\ge 18$, since
\[
2m\sqrt{4m-5}+2m+\sqrt{4m-5}-97>16m+2m+8-97>0.
\]

Next, at $x=\Lm$ we obtain
\[
R_{\mathrm{mix}}(\Lm)=
\frac{8m^3+34m^2\sqrt{4m-5}-154m^2-495m\sqrt{4m-5}+51m+996\sqrt{4m-5}+324}{8}.
\]
Its derivative with respect to $m$ equals
\[
\frac{d}{dm}R_{\mathrm{mix}}(\Lm)=
\frac{(24m^2-308m+51)\sqrt{4m-5}+340m^2-3310m+4467}{8\sqrt{4m-5}}.
\]
For $m\ge 18$, both numerators
\[
24m^2-308m+51
\qquad\text{and}\qquad
340m^2-3310m+4467
\]
are positive, so $\frac{d}{dm}R_{\mathrm{mix}}(\Lm)>0$ on $[18,\infty)$. Since
\[
R_{\mathrm{mix}}(\Lm)\big|_{m=18}=-\frac{2169}{4}+\frac{1557}{4}\sqrt{67}>0,
\]
it follows that $R_{\mathrm{mix}}(\Lm)>0$ for all even $m\ge 18$.

Finally,
\[
R_{\mathrm{mix}}'(x)=\Bigl(\frac{39m}{2}-168\Bigr)x^2+(2m^2-56m+152)x+m^2-\frac{73m}{2}+145.
\]
Its derivative with respect to $x$ is
\[
R_{\mathrm{mix}}''(x)=(39m-336)x+2m^2-56m+152.
\]
Since $39m-336>0$ for $m\ge 18$, the function $R_{\mathrm{mix}}''(x)$ is increasing in $x$ on $[\Lm,\infty)$. Thus it suffices to evaluate at $x=\Lm$:
\[
R_{\mathrm{mix}}''(\Lm)=\frac{4m^2+(39m-336)\sqrt{4m-5}-73m-32}{2}.
\]
For $m\ge 18$, both terms
\[
39m-336
\qquad\text{and}\qquad
4m^2-73m-32
\]
are positive, so $R_{\mathrm{mix}}''(\Lm)>0$. Hence $R_{\mathrm{mix}}''(x)>0$ for all $x\ge \Lm$, and therefore $R_{\mathrm{mix}}'$ is increasing on $[\Lm,\infty)$.

It remains to check positivity at $\Lm$:
\[
R_{\mathrm{mix}}'(\Lm)=\frac{4m^2\sqrt{4m-5}+86m^2-73m\sqrt{4m-5}-1008m-32\sqrt{4m-5}+1556}{4}.
\]
Its derivative with respect to $m$ is
\[
\frac{d}{dm}R_{\mathrm{mix}}'(\Lm)=\frac{(172m-1008)\sqrt{4m-5}+40m^2-478m+301}{4\sqrt{4m-5}}.
\]
For $m\ge 18$, both numerators
\[
172m-1008
\qquad\text{and}\qquad
40m^2-478m+301
\]
are positive, so $\frac{d}{dm}R_{\mathrm{mix}}'(\Lm)>0$ on $[18,\infty)$. Since
\[
R_{\mathrm{mix}}'(\Lm)\big|_{m=18}=2819-\frac{25}{2}\sqrt{67}>0,
\]
it follows that $R_{\mathrm{mix}}'(\Lm)>0$ for all even $m\ge 18$. Therefore $R_{\mathrm{mix}}'(x)>0$ for all $x\ge \Lm$, so
\[
R_{\mathrm{mix}}(x)\ge R_{\mathrm{mix}}(\Lm)>0
\qquad\text{for all }x\ge \Lm.
\]

By Lemma~\ref{lem:root-template}, the mixed family also lies strictly below $\rhoext(m)$.

\subsection[The e(W)=2, e(A+,W)=2 families]{The $e(W)=2$, $e(\Ap,W)=2$ families}\label{app:ew2-t2}

We record the sign checks used in Propositions~\ref{prop:ew2-t2-p3} and~\ref{prop:ew2-t2-2k2}. Throughout this subsection, set
\[
\Delta=\sqrt{4m-5}.
\]

\subsubsection{The $P_3$ families}\label{app:ew2-t2-p3}

For the family $\mathcal P_m^{(0)}$, the quotient polynomial is
\begin{align*}
 f_0(x)&=x^6-2x^5+(4-m)x^4+(2m-27)x^3 \\
&\quad +(6m-68)x^2+(78-5m)x+84-6m.
\end{align*}
We show that $f_0(x)>0$ for all $x\ge \Lm$. Differentiating three times gives
\[
f_0'''(x)=120x^3-120x^2+(96-24m)x+12m-162.
\]
For $x\ge \Lm$ one has $m\le x^2-x+3/2$, and since $12-24x<0$ for $x\ge \Lm$, we get
\[
f_0'''(x)\ge 12(8x^3-7x^2+4x-12)>0.
\]
Thus $f_0''$ is increasing on $[\Lm,\infty)$. At $x=\Lm$,
\[
f_0''(\Lm)=18m^2+(10m-77)\Delta-12m-\frac{405}{2}>0
\]
for $m\ge 18$. Hence $f_0'$ is increasing on $[\Lm,\infty)$. Also
\[
f_0'(\Lm)=5m^2-75m+\frac{319}{4}+\Delta\left(m^2+8m-\frac{425}{4}\right)>0
\]
for $m\ge 18$. Thus $f_0$ is increasing on $[\Lm,\infty)$.
Finally,
\[
f_0(\Lm)=9m^2-124m+\frac{941}{4}+\Delta\left(\frac{m^2}{2}-11m+\frac{79}{8}\right).
\]
The derivative of this expression as a function of $m$ is
\[
\frac{20m^2+72m\Delta-284m-495\Delta+298}{4\Delta},
\]
which is positive for $m\ge 18$. Since
\[
f_0(\Lm)\big|_{m=18}=\frac{3677}{4}-\frac{209}{8}\sqrt{67}>0,
\]
we have $f_0(\Lm)>0$ for all $m\ge 18$. Therefore $f_0(x)>0$ for all $x\ge \Lm$.

For the family $\mathcal P_m^{(1)}$, the quotient polynomial is
\begin{align*}
 f_1(x)&=x^7-2x^6+(4-m)x^5+(2m-27)x^4+(7m-82)x^3 \\
&\quad +(114-7m)x^2+(148-10m)x+m-16.
\end{align*}
Here
\[
f_1'''(x)=210x^4-240x^3+240x^2-648x-492+m(-60x^2+48x+42).
\]
Using again $m\le x^2-x+3/2$ for $x\ge \Lm$, and the fact that $-60x^2+48x+42<0$ for $x\ge 4$, we get
\[
f_1'''(x)\ge 3(50x^4-44x^3+48x^2-206x-143)>0
\]
for $x\ge \Lm$. Thus $f_1''$ is increasing on $[\Lm,\infty)$. Moreover,
\[
f_1''(\Lm)=39m^2-336m+\frac{1123}{4}+\Delta\left(11m^2+18m-\frac{1577}{4}\right)>0
\]
for $m\ge 18$, and hence $f_1'$ is increasing. Also
\[
f_1'(\Lm)=2m^3+28m^2-445m+\frac{2731}{4}+\Delta\left(\frac72m^2-45m+\frac{93}{8}\right)>0.
\]
So $f_1$ is increasing on $[\Lm,\infty)$. Finally,
\[
f_1(\Lm)=m^3-\frac{77}{4}m^2-\frac m2+\frac{1559}{16}
+\Delta\left(\frac{21}{4}m^2-\frac{311}{4}m+\frac{2817}{16}\right).
\]
The coefficient of $\Delta$ is positive for $m\ge 18$, and $\Delta>8$, so
\[
f_1(\Lm)>
\frac{16m^3+364m^2-9960m+24095}{16}>0
\]
for $m\ge 18$. Therefore $f_1(x)>0$ for all $x\ge \Lm$.

\subsubsection{The $2K_2$ families}\label{app:ew2-t2-2k2}

Let
\[
p_m(x)=x^4-mx^2-(m-2)x+\frac m2-1.
\]
For each of the four $2K_2$ families, write
\[
f_*(x)=q_*(x)p_m(x)+R_*(x),
\]
where $*\in\{ss,sd,ds,dd\}$. Direct division gives
\[
q_{ss}(x)=x^3-2x^2+4x+m-28,
\]
\[
R_{ss}(x)=\left(\frac{19m}{2}-94\right)x^3+(m^2-33m+154)x^2+(m^2-45m+262)x-\frac{m^2}{2}+23m-164;
\]
\[
q_{sd}(x)=x^3-2x^2+5x+m-30,
\]
\[
R_{sd}(x)=\left(\frac{19m}{2}-87\right)x^3+(m^2-32m+120)x^2+\left(m^2-\frac{85m}{2}+193\right)x-\frac{m^2}{2}+20m-102;
\]
\[
q_{ds}(x)=x^3-2x^2+5x+m-31,
\]
\[
R_{ds}(x)=\left(\frac{19m}{2}-84\right)x^3+(m^2-32m+110)x^2+
\left(m^2-\frac{93m}{2}+243\right)x-\frac{m^2}{2}+\frac{37m}{2}-65;
\]
\[
q_{dd}(x)=x^3-2x^2+6x+m-33,
\]
\[
R_{dd}(x)=\left(\frac{19m}{2}-76\right)x^3+(m^2-31m+73)x^2+(m^2-45m+186)x-\frac{m^2}{2}+\frac{37m}{2}-51.
\]

Each $q_*$ has derivative $3x^2-4x+c$ with $c\in\{4,5,5,6\}$, so $q_*$ is increasing on $\mathbb R$. At $x=\Lm$,
\[
q_{ss}(\Lm)=\frac m2\Delta+\frac m2+\frac34\Delta-\frac{103}{4},
\]
\[
q_{sd}(\Lm)=\frac m2\Delta+\frac m2+\frac54\Delta-\frac{109}{4},
\]
\[
q_{ds}(\Lm)=\frac m2\Delta+\frac m2+\frac54\Delta-\frac{113}{4},
\]
\[
q_{dd}(\Lm)=\frac m2\Delta+\frac m2+\frac74\Delta-\frac{119}{4}.
\]
Since $m\ge 18$ implies $\Delta>8$, all four are positive. Hence $q_*(x)>0$ for all $x\ge \Lm$.

It remains to check the remainders. Their second derivatives are
\[
R''_{ss}(x)=2m^2+(57m-564)x-66m+308,
\]
\[
R''_{sd}(x)=2m^2+(57m-522)x-64m+240,
\]
\[
R''_{ds}(x)=2m^2+(57m-504)x-64m+220,
\]
\[
R''_{dd}(x)=2m^2+(57m-456)x-62m+146.
\]
For $x\ge \Lm>4$ and $m\ge 18$, each of these is positive, so every $R_*'$ is increasing on $[\Lm,\infty)$.

Let $\Phi_*(m)=R_*'(\Lm)$. Explicitly,
\begin{align*}
\Phi_{ss}(m)&=m^2\Delta+\frac{61}{2}m^2-\frac{75}{4}m\Delta-\frac{777}{2}m+13\Delta+698,\\
\Phi_{sd}(m)&=m^2\Delta+\frac{61}{2}m^2-\frac{71}{4}m\Delta-364m-\frac{21}{2}\Delta+574,\\
\Phi_{ds}(m)&=m^2\Delta+\frac{61}{2}m^2-\frac{71}{4}m\Delta-359m-16\Delta+605,\\
\Phi_{dd}(m)&=m^2\Delta+\frac{61}{2}m^2-\frac{67}{4}m\Delta-\frac{665}{2}m-41\Delta+487.
\end{align*}
Their derivatives are
\[
\Phi'_{ss}(m)=61m-\frac{777}{2}+\frac{40m^2-490m+479}{4\Delta},
\]
\[
\Phi'_{sd}(m)=61m-364+\frac{40m^2-466m+271}{4\Delta},
\]
\[
\Phi'_{ds}(m)=61m-359+\frac{40m^2-466m+227}{4\Delta},
\]
\[
\Phi'_{dd}(m)=61m-\frac{665}{2}+\frac{40m^2-442m+7}{4\Delta}.
\]
For $m\ge 18$, these are all positive. Since
\[
\Phi_{ss}(18)=3587-\frac{\sqrt{67}}2>0,
\]
\[
\Phi_{sd}(18)=3904-6\sqrt{67}>0,
\]
\[
\Phi_{ds}(18)=4025-\frac{23}{2}\sqrt{67}>0,
\]
\[
\Phi_{dd}(18)=4384-\frac{37}{2}\sqrt{67}>0,
\]
we have $R_*'(\Lm)>0$ for every $m\ge 18$. Since each $R_*'$ is increasing in $x$, it follows that $R_*'(x)>0$ on $[\Lm,\infty)$.

Finally, let $\Psi_*(m)=R_*(\Lm)$. At $m=18$,
\[
\Psi_{ss}(18)=-\frac{207}{4}+\frac{2015}{4}\sqrt{67}>0,
\]
\[
\Psi_{sd}(18)=-151+545\sqrt{67}>0,
\]
\[
\Psi_{ds}(18)=-\frac{985}{4}+\frac{2221}{4}\sqrt{67}>0,
\]
\[
\Psi_{dd}(18)=-\frac{1473}{4}+\frac{2403}{4}\sqrt{67}>0.
\]
Moreover,
\begin{align*}
\Psi'_{ss}(m)&=
\frac{(24m^2-316m+239)\Delta+460m^2-4702m+7239}{8\Delta},\\
\Psi'_{sd}(m)&=
\frac{(24m^2-300m+29)\Delta+460m^2-4450m+6177}{8\Delta},\\
\Psi'_{ds}(m)&=
\frac{(24m^2-300m-43)\Delta+460m^2-4474m+6505}{8\Delta},\\
\Psi'_{dd}(m)&=
\frac{(24m^2-284m-245)\Delta+460m^2-4222m+5511}{8\Delta}.
\end{align*}
Using $\Delta>8$, the numerators are bounded below respectively by
\[
652m^2-7230m+9151,
\]
\[
652m^2-6850m+6409,
\]
\[
652m^2-6874m+6161,
\]
\[
652m^2-6494m+3551,
\]
which are positive for $m\ge 18$. Thus $R_*(\Lm)>0$ for all $m\ge 18$. Since each $R_*$ is increasing on $[\Lm,\infty)$, we get $R_*(x)>0$ for all $x\ge \Lm$. By Lemma~\ref{lem:root-template}, the Perron root of each $2K_2$ family is less than $\rhoext(m)$.

\subsection[The e(W)=2, e(A+,W)=1 families]{The $e(W)=2$, $e(\Ap,W)=1$ families}\label{app:ew2-t1}

We record the finite shifted-polynomial checks used in Proposition~\ref{prop:ew2-t1}. For a family polynomial $f_F$, set
\[
\theta_{18}=\frac92,
\qquad
\theta_{20}=\frac{24}{5}.
\]
For each relevant pair $(F,m)$, the polynomial $f_F$ is obtained from the natural equitable quotient described in the proof of Proposition~\ref{prop:ew2-t1}. We then expand
\[
D_{F,m} f_F(t+\theta_m)=\sum_i c_i t^i,
\qquad t\ge 0,
\]
where $D_{F,m}$ is the positive integer in the table below. The final column in Table~\ref{tab:ew2-t1-shifts} records $\min_i c_i$. Since every listed minimum is positive, each shifted polynomial is positive for all $t\ge 0$.

\begin{table}[htbp]
\centering
\small
\caption{Finite shifted-coefficient checks for the $e(W)=2$, $e(\Ap,W)=1$ families.}
\label{tab:ew2-t1-shifts}
\begin{tabular}{@{}c c c c@{}}
\toprule
Family $F$ & $m$ & $D_{F,m}$ & $\min_i c_i$ \\
\midrule
$\mathcal P^{M,s}$ & $18$ & $256$ & $256$ \\
$\mathcal P^{M,d}$ & $18$ & $128$ & $128$ \\
$\mathcal P^{E,0}$ & $18$ & $256$ & $256$ \\
$\mathcal P^{E,s}$ & $18$ & $256$ & $256$ \\
$\mathcal P^{E,d}$ & $18$ & $512$ & $512$ \\
$\mathcal K^{[3]}$ & $18$ & $256$ & $256$ \\
$\mathcal K^{mix}$ & $18$ & $1024$ & $1024$ \\
$\mathcal K^{edge}$ & $18$ & $512$ & $512$ \\
\midrule
$\mathcal P^{M,s}$ & $20$ & $390625$ & $390625$ \\
$\mathcal P^{M,d}$ & $20$ & $78125$ & $78125$ \\
$\mathcal P^{E,0}$ & $20$ & $390625$ & $390625$ \\
$\mathcal P^{E,s}$ & $20$ & $390625$ & $390625$ \\
$\mathcal P^{E,d}$ & $20$ & $1953125$ & $1953125$ \\
$\mathcal K^{[3]}$ & $20$ & $390625$ & $390625$ \\
$\mathcal K^{mix}$ & $20$ & $9765625$ & $9765625$ \\
$\mathcal K^{edge}$ & $20$ & $1953125$ & $1953125$ \\
$\mathcal K^{[1,1,1]}$ & $20$ & $48828125$ & $48828125$ \\
\bottomrule
\end{tabular}
\end{table}

For completeness, we also note that the omitted row $\mathcal K^{[1,1,1]}$ at $m=18$ is unnecessary because that family would require three distinct special vertices of $\Az$, whereas the edge count gives only $|\Az|=2$ in that case.

\subsection[The e(W)=2, e(A+,W)=0 families]{The $e(W)=2$, $e(\Ap,W)=0$ families}\label{app:ew2-t0}

This subsection records the finite shifted-polynomial check used in Proposition~\ref{prop:ew2-t0}. For each of the ten families in the $m=18$, $e(W)=2$, $e(\Ap,W)=0$ branch, let $f_F(x)$ be the characteristic polynomial of the natural equitable quotient. Set
\[
x=t+\frac{17}{4}.
\]
A direct computation gives
\[
f_{\mathcal P^{0,s}}\!\left(t+\frac{17}{4}\right)
=
\frac{(4t+17)\bigl(1024t^5+18688t^4+120448t^3+321056t^2+300884t+5765\bigr)}{4096},
\]
\[
f_{\mathcal P^{0,d}}\!\left(t+\frac{17}{4}\right)
=
\frac{(4t+13)(4t+17)\bigl(256t^4+3840t^3+17888t^2+24624t+489\bigr)}{4096},
\]
\[
f_{\mathcal P^{1,[3]}}\!\left(t+\frac{17}{4}\right)
=
\frac{4096t^6+92160t^5+799488t^4+3315456t^3+6509936t^2+4860584t+299989}{4096},
\]
\begin{align*}
f_{\mathcal P^{1,adj}}\!\left(t+\frac{17}{4}\right)
&=
\frac{1}{65536}\bigl(65536t^8+2031616t^7+26509312t^6+188542976t^5 \\
&\quad +788767232t^4+1941161216t^3+2612783616t^2 \\
&\quad +1528077008t+95021557\bigr),
\end{align*}
\begin{align*}
f_{\mathcal P^{1,nonadj}}\!\left(t+\frac{17}{4}\right)
&=
\frac{1}{16384}\bigl(16384t^7+438272t^6+4764672t^5+26918656t^4 \\
&\quad +83352768t^3+134567696t^2+90766460t+5750821\bigr),
\end{align*}
\[
f_{\mathcal P^{1,[1,1,1]}}\!\left(t+\frac{17}{4}\right)
=
\frac{4096t^6+92160t^5+803584t^4+3389184t^3+6932336t^2+5623464t+367141}{4096},
\]
\[
f_{\mathcal K^{[4]}}\!\left(t+\frac{17}{4}\right)
=
\frac{(4t+21)\bigl(256t^4+3072t^3+10912t^2+11552t+1229\bigr)}{1024},
\]
\begin{align*}
f_{\mathcal K^{[3,1]}}\!\left(t+\frac{17}{4}\right)
&=
\frac{1}{16384}\bigl(16384t^7+421888t^6+4412416t^5+23970560t^4 \\
&\quad +71411904t^3+111491920t^2+74720028t+7626337\bigr),
\end{align*}
\[
f_{\mathcal K^{[2,2]_e}}\!\left(t+\frac{17}{4}\right)
=
\frac{(4t+9)(4t+21)\bigl(256t^4+3328t^3+13408t^2+16848t+1889\bigr)}{4096},
\]
\[
f_{\mathcal K^{[2,2]_c}}\!\left(t+\frac{17}{4}\right)
=
\frac{(4t+13)(4t+25)\bigl(256t^4+3328t^3+13408t^2+16848t+1889\bigr)}{4096}.
\]
All displayed factors and coefficients are positive for $t\ge 0$, so every shifted polynomial is positive on $[0,\infty)$.

\subsection[Low-end non-K4 quotient checks]{Low-end non-$K_4$ quotient checks}\label{app:nonK4-low}

This subsection records the finite quotient comparisons used in Lemma~\ref{lem:nonK4-case1}. The only low-end non-$K_4$ branch not already ruled out by the inequalities in the main text is $G[\Ap]\cong K_4-e$. We use the following notation. The family $\mathcal H^0_m$ is the case $e(W)=0$, so
\[
\mathcal H^0_m=K_1\vee\bigl((K_4-e)\cup (m-9)K_1\bigr).
\]
When $e(W)=1$, the unique edge of $W$ has one of five possible attachment patterns: same-$\Az$, distinct-$\Az$, mixed high, mixed low, or double-low. These are denoted
\[
\mathcal H^{s}_m,\quad \mathcal H^{d}_m,\quad
\mathcal H^{h}_m,\quad \mathcal H^{\ell}_m,\quad
\mathcal H^{r,s}_m,
\]
respectively. Here ``mixed high'' means the $\Ap$-adjacent endpoint of the $W$-edge attaches to one of the two degree-three vertices of $K_4-e$, while ``mixed low'' means it attaches to one of the two degree-two vertices. The double-low family is the case in which the two endpoints attach to the unique nonadjacent pair of $K_4-e$.

For each family $F$ and relevant value of $m$, let $f_{F,m}(x)$ be the characteristic polynomial of the natural equitable quotient matrix. The partition separates $u^*$, the automorphism orbits inside $G[\Ap]$, the special vertices of $W$, the special vertices of $\Az$, and the ordinary leaves of $\Az$; this makes each quotient computation reproducible from the stated family description. Table~\ref{tab:nonk4-low-shifts} records a positive shift $\theta_m$ and a positive integer $D_{F,m}$ such that
\[
D_{F,m}f_{F,m}(t+\theta_m)
\]
has all coefficients positive. Hence $f_{F,m}(x)>0$ for all $x\ge \theta_m$.

\begin{table}[htbp]
\centering
\small
\caption{Low-end non-$K_4$ shifted-coefficient checks.}
\label{tab:nonk4-low-shifts}
\begin{tabular}{@{}c c c c c@{}}
\toprule
Family $F$ & $m$ & $\theta_m$ & $D_{F,m}$ & $\min_i c_i$ \\
\midrule
$\mathcal H^0_m$ & $18$ & $9/2$ & $16$ & $16$ \\
$\mathcal H^0_m$ & $20$ & $24/5$ & $625$ & $625$ \\
$\mathcal H^0_m$ & $22$ & $24/5$ & $625$ & $625$ \\
\midrule
$\mathcal H^{s}_m$ & $18$ & $17/4$ & $4096$ & $4096$ \\
$\mathcal H^{d}_m$ & $18$ & $17/4$ & $4096$ & $4096$ \\
$\mathcal H^{h}_m$ & $18$ & $17/4$ & $65536$ & $65536$ \\
$\mathcal H^{\ell}_m$ & $18$ & $17/4$ & $65536$ & $65536$ \\
$\mathcal H^{r,s}_m$ & $18$ & $17/4$ & $1024$ & $1024$ \\
\midrule
$\mathcal H^{s}_m$ & $20$ & $24/5$ & $15625$ & $15625$ \\
$\mathcal H^{d}_m$ & $20$ & $24/5$ & $15625$ & $15625$ \\
$\mathcal H^{h}_m$ & $20$ & $24/5$ & $390625$ & $390625$ \\
$\mathcal H^{\ell}_m$ & $20$ & $24/5$ & $390625$ & $390625$ \\
$\mathcal H^{r,s}_m$ & $20$ & $24/5$ & $3125$ & $3125$ \\
\bottomrule
\end{tabular}
\end{table}

The comparison values are legitimate because
\[
p_{18}\!\left(\frac{17}{4}\right)<0,\qquad
p_{18}\!\left(\frac92\right)<0,\qquad
p_{20}\!\left(\frac{24}{5}\right)<0,\qquad
p_{22}\!\left(\frac{24}{5}\right)<0.
\]
Therefore $\rhoext(m)>\theta_m$ in every row of the table, and the corresponding Perron root is smaller than $\rhoext(m)$.

\subsection{Exact verification protocol for the finite tables}\label{app:verification-script}

For reproducibility, the finite shifted-coefficient checks in this appendix were verified with exact rational arithmetic. The accompanying ancillary file \href{https://github.com/Shreyhaan/Spectral-Graph-Theory-1/blob/main/finite_verification.py}{\texttt{finite\_verification.py}} follows the same protocol for every advertised finite row: construct the relevant polynomial, substitute the rational shift, clear denominators, and check positivity of each coefficient. The essential routine is:
\begin{verbatim}
from sympy import symbols, Rational, Poly, expand
x, t = symbols('x t')

def check_shift(poly, theta, denom=1):
    P = Poly(expand(denom * poly.subs(x, t + theta)), t)
    coeffs = P.all_coeffs()
    assert all(c > 0 for c in coeffs), coeffs
    return min(coeffs), coeffs
\end{verbatim}
The computations use no floating point arithmetic. The script verifies the shifted-coefficient checks; the structural reductions and family classifications are the mathematical content of the main text.

\printcredits

\section*{Funding}
This research did not receive any specific grant from funding agencies in the public, commercial, or not-for-profit sectors.

\section*{Declaration of competing interest}
The author declares no known competing financial interests or personal relationships that could have appeared to influence the work reported in this paper.

\section*{Data availability}
No datasets were generated or analyzed in this paper. The ancillary verification script used for the finite shifted-coefficient checks is submitted separately with the manuscript and is also publicly available at \url{https://github.com/Shreyhaan/Spectral-Graph-Theory-1/blob/main/finite_verification.py}.

\section*{Acknowledgments}
The author acknowledges the work of R. Zheng and G. Zhang, whose Perron-neighborhood decomposition provides an important starting point for the branch analysis used here.


\begin{thebibliography}{99}
\bibitem{BrualdiHoffman1985}
R.~A. Brualdi and A.~J. Hoffman,
On the spectral radius of $(0,1)$-matrices,
Linear Algebra Appl. \textbf{65} (1985), 133--146.
\href{https://doi.org/10.1016/0024-3795(85)90092-8}{doi:10.1016/0024-3795(85)90092-8}.

\bibitem{LiLuPeng2023}
Y.~Li, L.~Lu, and Y.~Peng,
Spectral extremal graphs for the bowtie,
Discrete Math. \textbf{346} (2023), no.~12, Paper No.~113680.
\href{https://doi.org/10.1016/j.disc.2023.113680}{doi:10.1016/j.disc.2023.113680}.

\bibitem{LiZhaoZou2025}
S.~Li, S.~Zhao, and L.~Zou,
Spectral extrema of graphs with fixed size: forbidden a fan graph, a friendship graph, or a theta graph,
J. Graph Theory \textbf{110} (2025), 483--495.
\href{https://doi.org/10.1002/jgt.23287}{doi:10.1002/jgt.23287}.

\bibitem{LiuWang2024}
Y.~X. Liu and L.~Wang,
Spectral radius of graphs of given size with forbidden subgraphs,
Linear Algebra Appl. \textbf{689} (2024), 108--125.
\href{https://doi.org/10.1016/j.laa.2024.02.026}{doi:10.1016/j.laa.2024.02.026}.

\bibitem{Nikiforov2010}
V.~Nikiforov,
The spectral radius of graphs without paths and cycles of specified length,
Linear Algebra Appl. \textbf{432} (2010), 2243--2256.
\href{https://doi.org/10.1016/j.laa.2009.05.023}{doi:10.1016/j.laa.2009.05.023}.

\bibitem{Nosal1970}
E.~Nosal,
Eigenvalues of graphs,
Master's thesis, University of Calgary, 1970.
\href{https://doi.org/10.11575/PRISM/21350}{doi:10.11575/PRISM/21350}.

\bibitem{PirzadaRehman2025}
S.~Pirzada and A.~Rehman,
The maximum spectral radius of graphs of even size forbidding $\{H(3,3),H(4,3)\}$,
Comput. Appl. Math. \textbf{44} (2025), Paper No.~295.
\href{https://doi.org/10.1007/s40314-025-03257-0}{doi:10.1007/s40314-025-03257-0}.

\bibitem{RehmanPirzada2025}
A.~Rehman and S.~Pirzada,
The spectral Tur\'an problem about graphs of given size with forbidden subgraphs,
AKCE Int. J. Graphs Comb. \textbf{22} (2025), no.~1, 91--93.
\href{https://doi.org/10.1080/09728600.2024.2421212}{doi:10.1080/09728600.2024.2421212}.


\bibitem{ZhangWangFish2025}
Y.~Zhang and L.~Wang,
Spectral extremal problem on the fish graph,
Bull. Malays. Math. Sci. Soc. \textbf{48} (2025), Art.~207.
\href{https://doi.org/10.1007/s40840-025-01992-5}{doi:10.1007/s40840-025-01992-5}.

\bibitem{ZhengZhang2025}
R.~Zheng and G.~Zhang,
Spectral extrema of graphs of given even size forbidding $H(4,3)$,
arXiv preprint \href{https://arxiv.org/abs/2509.18594}{arXiv:2509.18594}, 2025.
\href{https://doi.org/10.48550/arXiv.2509.18594}{doi:10.48550/arXiv.2509.18594}.
\end{thebibliography}
\end{document}